\newtheorem{mainthm}{Theorem}
\newtheorem{mainlem}[mainthm]{Lemma}
\newtheorem{thm}{Theorem}[section]
\newtheorem{cor}[thm]{Corollary}
\newtheorem{lemma}[thm]{Lemma}
\theoremstyle{definition}
\newlist{enumthm}{enumerate}{1}  
\setlist[enumthm,1]{label=\textup{(\roman*)}}
\renewcommand{\phi}{\varphi}
\renewcommand{\theta}{\vartheta}
\newcommand{\eps}{\varepsilon}
\renewcommand{\geq}{\geqslant}
\renewcommand{\leq}{\leqslant}
\newcommand{\nbd}{\nobreakdash-\hspace{0pt}}  
\newcommand{\defemph}[1]{\textbf{#1}}
\newcommand{\nats}{\mathbb{N}}
\newcommand{\ints}{\mathbb{Z}}
\newcommand{\rats}{\mathbb{Q}}
\newcommand{\reals}{\mathbb{R}}
\newcommand{\compl}{\mathbb{C}}
\newcommand{\quats}{\mathbb{H}}
\newcommand{\crp}[1]{\mathbb{#1}}     
\newcommand{\iso}{\cong}    
\newcommand{\nteq}{\trianglelefteq} 
\DeclarePairedDelimiter{\card}{\lvert}{\rvert} 
\DeclarePairedDelimiter{\fdeg}{\lvert}{\rvert} 
\DeclarePairedDelimiter{\erz}{\langle}{\rangle}
\DeclarePairedDelimiterX{\ipcf}[2]{\lbrack}{\rbrack}{#1,#2}  
\DeclareMathOperator{\Ker}{Ker}        
\DeclareMathOperator{\FS}{\nu_2}       
\DeclareMathOperator{\Gal}{Gal}
\DeclareMathOperator{\Aut}{Aut}
\DeclareMathOperator{\Syl}{Syl}
\DeclareMathOperator{\Z}{\mathbf{Z}}         
\DeclareMathOperator{\C}{\mathbf{C}}     
\DeclareMathOperator{\Irr}{Irr}
\DeclareMathOperator{\Lin}{Lin}
\DeclareMathOperator{\ord}{ord}
\DeclareMathOperator{\mat}{\mathbf{M}}      
\DeclareMathOperator{\enmo}{End} 
\DeclareMathOperator{\R}{\mathbf{R}}   
\DeclareMathOperator{\NKer}{NKer}
\begin{document}

\title{Groups with a nontrivial nonideal kernel}
\author{Frieder Ladisch}
\address{Universität Rostock,
         Institut für Mathematik,         
         18051 Rostock,
         Germany}
\email{frieder.ladisch@uni-rostock.de}
\thanks{Author supported by the DFG (Project: SCHU 1503/6-1)}
\subjclass[2010]{20C15}
\keywords{Characters, Finite groups, Representations, Schur indices,
         Division rings}

\begin{abstract} 
  We classify finite groups $G$,
  such that the group algebra, $\mathbb{Q}G$
  (over the field of rational numbers $\mathbb{Q}$),
  is the direct product of the group algebra $\mathbb{Q}[G/N]$
  of a proper factor group $G/N$, and some division rings.
\end{abstract}

\maketitle

\section{Introduction}

Let $G$ be a finite group 
and $\crp{K}$ a field of characteristic zero.
By Maschke's theorem and Wedderburn-Artin theory,
the group algebra $\crp{K}G$ of $G$ over $\crp{K}$
is a direct product of matrix rings 
over division algebras:
\[ \crp{K}G \iso
    \mat_{d_1}(D_1)
    \times \dotsm \times
    \mat_{d_r}(D_r).
\]
A natural question to ask is when each factor in 
this decomposition is actually a division ring
(equivalently, the group algebra $\crp{K}G$ contains 
no nilpotent elements).
In the classical case where $\crp{K}$ is algebraically closed,
it is well known that
$\crp{K} G$ is a direct product of division rings
if and only if $G$ is abelian.
For $\crp{K}=\rats$, the question was solved
by S.~K.~Seghal~\cite[Theorem~3.5]{Sehgal75}
(see Theorem~\ref{t:qg_divringprod} below).

In this paper, we consider a slightly more general question:
Let $1\neq N \nteq G$ be a normal subgroup.
Then 
\[ \crp{K}G \iso \crp{K}[G/N] \times I,
\]
where the (twosided) ideal $I$ is the kernel of the 
canonical homomorphism $\crp{K}G \to \crp{K}[G/N]$.
Now we ask: 
for which finite groups is there an $N\neq 1$
such that
the ideal $I$ above is a direct product of division rings?
If there is such an $N$, then any nilpotent element 
of $\crp{K}G$ has constant coefficients on cosets of $N$.
Also, only twosided ideals of $\crp{K}G$ can distinguish the elements
of $N$.

The following is just a basic observation,
which allows us to state our results more conveniently.

\begin{mainlem}\label{main:introlemma}
  For each field $\crp{K}$ (of characteristic zero) 
  and each finite group $G$, there is a unique 
  maximal normal subgroup 
  $ N$, denoted by $\NKer_{\crp{K}}(G)$,
  such that the 
  kernel of the map $\crp{K}G \to \crp{K}[G/N]$
  is a direct product of division rings.
\end{mainlem}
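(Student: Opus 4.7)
The plan is to read off the desired subgroup directly from the Wedderburn decomposition of $\crp{K}G$. By Maschke's theorem, $\crp{K}G$ is semisimple, and Wedderburn--Artin theory yields
\[
  \crp{K}G \iso A_1 \times \dotsm \times A_r, \qquad A_i \iso \mat_{d_i}(D_i),
\]
with each $D_i$ a division ring. Let $\rho_i \colon \crp{K}G \to A_i$ denote the $i$-th projection, and set $K_i := \ker(\rho_i|_G) \nteq G$. Note that $A_i$ is itself a division ring precisely when $d_i = 1$.

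The key observation is that the kernel $I_N$ of the canonical surjection $\crp{K}G \to \crp{K}[G/N]$ is a twosided ideal of $\crp{K}G$, and every twosided ideal in a finite product of simple algebras is a sub-product. Specifically, $A_i$ is a summand of $I_N$ iff $\rho_i$ does not factor through $G/N$, i.e., iff $N \not\subseteq K_i$. Hence
\[
  I_N = \prod_{i\colon N \not\subseteq K_i} A_i,
\]
which is a direct product of division rings precisely when $d_i = 1$ for every index $i$ with $N \not\subseteq K_i$; equivalently, when $N \subseteq K_i$ for every $i$ with $d_i > 1$.

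Setting
\[
  N^* := \bigcap_{i\colon d_i > 1} K_i \nteq G,
\]
we therefore see that a normal subgroup $N$ satisfies the condition of the lemma if and only if $N \subseteq N^*$. In particular $N^*$ itself satisfies the condition and is manifestly the unique maximum among such subgroups; declaring $\NKer_{\crp{K}}(G) := N^*$ proves the statement. The argument relies only on the semisimplicity of $\crp{K}G$ and the description of twosided ideals in a product of matrix rings, so no serious obstacle arises; the content is in having packaged the definition in a form that can now be used throughout the paper.
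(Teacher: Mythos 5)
Your argument is correct, and it rests on the same mechanism as the paper's proof: the kernel $I_N$ of $\crp{K}G \to \crp{K}[G/N]$ is a two-sided ideal of a semisimple algebra, hence a sub-product of the block ideals, and a block $A_i$ lies in $I_N$ exactly when $N \not\subseteq K_i$; consequently $N$ has the desired property iff $N \subseteq N^* := \bigcap_{i\colon d_i>1} K_i$, which is therefore the unique maximum. The difference is one of packaging rather than substance. The paper first \emph{defines} $\NKer_{\crp{K}}(G)$ character-theoretically, as the intersection of the kernels of the $\chi \in \Irr G$ with $\chi(1) > m_{\crp{K}}(\chi)$, and then proves the lemma via the central idempotent $e_N$ (Lemmas~\ref{l:nt_id} and~\ref{l:idemp}); your $N^*$ is the same subgroup, since a block is a division ring precisely when its character is skew-linear and $K_i = \Ker(\chi)$ for the corresponding $\chi$ (Lemma~\ref{l:reps_basics}), but you obtain existence and uniqueness without invoking this dictionary. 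What your route buys is self-containedness at this point: no characters or Schur indices are needed. What the paper's route buys is that the lemma arrives together with the description of $\NKer_{\crp{K}}(G)$ in terms of skew-linear characters, which is the form actually used throughout the rest of the paper. Two small points you should make explicit: the verification that $A_i \subseteq I_N$ iff $N \not\subseteq K_i$ (if $N \not\subseteq K_i$, then some $n-1 \in I_N$ has nonzero image in $A_i$, and since $I_N$ is a sub-product of blocks it must contain all of $A_i$; conversely, if $N \subseteq K_i$ then $\rho_i$ kills $I_N$), and the observation that $N^*$ is indeed a normal subgroup, being an intersection of kernels of homomorphisms defined on all of $G$.
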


We will give a more direct definition of
 $\NKer_{\crp{K}}(G)$ in Section~\ref{sec:defnker}
 below, before we prove Lemma~\ref{main:introlemma}.
We call $\NKer_{\crp{K}}(G)$ the 
\defemph{nonideal kernel} of $G$ (over $\crp{K}$).
 
We view the zero ideal as an empty
product of division rings, so possibly 
$\NKer_{\crp{K}}(G)=1$.  
Indeed, this is the case for ``most'' groups,
and we want to classify the groups $G$ for which
$\NKer_{\crp{K}}(G)\neq 1$.
Our first result concerns the field $\reals$ of real numbers. 

We need to recall a definition:
A nonabelian group $G$ 
is called \defemph{generalized dicyclic},
if it has an abelian subgroup $A$ of index $2$ and an element
$g \in G \setminus A$
such that $g^2 \neq 1$ and $ a^g = a^{-1} $
for all $a\in A$.
If $A$ is cyclic, then $G$ is called \defemph{dicyclic}
(or generalized quaternion).
Furthermore, $Q_8$ denotes the quaternion group
of order $8$ and $C_n$ a cyclic group of order $n$.

\begin{mainthm}\label{main:reals}
  Let $G$ be a finite group.
  Then $ \NKer_{\reals}(G) > 1 $ if and only if
  one of the following holds:
  \begin{enumthm}
  \item $G $ is abelian and $G\neq \{1\}$.
  \item $G$ is generalized dicyclic.
  \item $G \iso C_4 \times Q_8 \times (C_2)^r$, $r\in \nats $.
  \item $G \iso Q_8 \times Q_8 \times (C_2)^r$, $r\in \nats $.
  \end{enumthm}
\end{mainthm}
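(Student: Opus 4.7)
The plan is to recast the problem in character-theoretic terms, dispatch the easy cases, and then prove a structural classification in the remaining case.

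Via the real Wedderburn decomposition, each $\chi \in \Irr(G)$ contributes a simple component of $\reals G$ equal to $\mat_{\chi(1)}(\reals)$, $\mat_{\chi(1)}(\compl)$, or $\mat_{\chi(1)/2}(\quats)$ according as $\nu_2(\chi)$ is $1$, $0$, or $-1$ (complex-type pairs $\chi, \bar\chi$ producing a single component). Such a component is a division ring precisely when $\chi$ is linear, or when $\chi(1) = 2$ and $\nu_2(\chi) = -1$. Call these characters \emph{good} and the rest \emph{bad}. From Lemma~\ref{main:introlemma} one reads off
\[ \NKer_\reals(G) = \bigcap_{\chi \text{ bad}} \ker\chi, \]
so the task reduces to characterising when this intersection is nontrivial.

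If $G$ has no bad character, then $\reals G$ is itself a direct product of division rings, and the real-field analogue of Theorem~\ref{t:qg_divringprod} (classical) identifies such $G$ as the nontrivial abelian and the generalized dicyclic groups, yielding (i) and~(ii). For (iii) and~(iv) I would check the condition directly using the product structure of characters and multiplicativity of $\nu_2$ under tensor products. In $C_4 \times Q_8 \times (C_2)^r$ the bad characters are the $2$-dimensional $\mu\rho\nu$ with $\mu$ a faithful (complex) linear character of $C_4$, $\rho$ the $2$-dim character of $Q_8$, and $\nu$ a character of $(C_2)^r$; they are of complex type, and a direct calculation shows their common kernel is the central $\langle (a^2, -1, 1) \rangle$. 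In $Q_8 \times Q_8 \times (C_2)^r$ the only bad family is $\rho \otimes \rho \otimes \nu$, of degree $4$ and real type because $\nu_2(\rho)^2 = 1$, with common kernel containing $\langle (-1, -1, 1) \rangle$.

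The heart of the proof is the converse: assume $N := \NKer_\reals(G) > 1$ and that at least one bad character exists, and deduce that $G$ is of type (iii) or~(iv). First, $N$ is abelian: any $1 \neq \theta \in \Irr(N)$ appears in some $\chi|_N$ by Frobenius reciprocity, and if $N \subseteq \ker \chi$ then $\chi|_N = \chi(1) \cdot 1_N$ contradicts $\theta \neq 1_N$, so $\chi$ is good, hence linear or $2$-dimensional, and Clifford's theorem forces $\theta$ to be linear. Next, for each such $\theta$ with inertia group $T = I_G(\theta)$, any irreducible $\chi = \eta^G$ induced from some $\eta \in \Irr(T \mid \theta)$ must be good, giving $\chi(1) = [G:T] \cdot \eta(1) \in \{1, 2\}$ and Frobenius--Schur indicator $-1$ in the degree-$2$ case. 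A careful analysis of the allowed inertia indices, the extensions $\eta$ and their Schur cocycles, and of the behaviour of $\nu_2$ under induction and tensor products should then force $G \cong C_4 \times Q_8 \times (C_2)^r$ or $G \cong Q_8 \times Q_8 \times (C_2)^r$. The main obstacle lies exactly in this structural classification: one must rule out variants such as larger dicyclic--$Q_8$ central products, twisted extensions, or additional $C_4$ factors, by showing any such alternative either produces a bad character whose kernel lies outside the common intersection, or violates the Frobenius--Schur constraint that every ``new'' character (one not inflated from $G/N$) is linear or $2$-dim quaternionic.
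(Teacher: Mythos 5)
Your reduction to ``good'' (skew-linear) versus ``bad'' characters and the direct Frobenius--Schur computations for $C_4\times Q_8\times (C_2)^r$ and $Q_8\times Q_8\times(C_2)^r$ match the paper's ``if'' direction for those two cases. But the overall case division rests on a false premise. You claim that the groups with no bad character (i.e.\ with $\reals G$ a direct product of division rings) are exactly the nontrivial abelian and the generalized dicyclic groups; in fact they are the abelian groups and $Q_8\times (C_2)^r$ only (Theorem~\ref{t:local_dpdivrings}). A non-Dedekind generalized dicyclic group such as $Q_{16}$ \emph{does} have a bad character (the degree-$2$ character with dihedral quotient has indicator $+1$), while still $\NKer_{\reals}(Q_{16})=\erz{g^2}\neq 1$. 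Consequently your converse case ``$N>1$ and a bad character exists $\Rightarrow$ $G$ of type (iii) or (iv)'' is simply false: all non-Dedekind generalized dicyclic groups are counterexamples and must be allowed as an outcome. The same misconception leaves the ``if'' direction for case (ii) unproved: you never show that for an arbitrary generalized dicyclic group every $\chi$ with $g^2\notin\Ker\chi$ satisfies $\chi(1)=2$ and $\FS(\chi)=-1$; the paper does this by the explicit indicator computation in Lemma~\ref{l:gendickern}, using $(ga)^2=g^2$ for all $a\in A$.

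The second, and larger, gap is the converse's structural classification, which you yourself identify as ``the main obstacle'': ``a careful analysis of the allowed inertia indices \dots should then force'' is a plan, not a proof, and ruling out central products, twisted extensions, extra $C_4$ factors, etc.\ is precisely where all the work lies. The paper does not do this by bare Clifford theory. It first proves $\NKer_{\reals}(G)\leq \R(G)$, the intersection of all nonnormal subgroups (Lemma~\ref{l:many_normals}, via a central-idempotent argument using Lemma~\ref{l:idemp}), then shows that any nonnormal cyclic subgroup is generated by an element of order $4$ and that $\NKer_{\reals}(G)$ has order $2$ (Lemma~\ref{l:nn_ord4}), deduces that a non-$2$-group with $1<\NKer_{\reals}(G)<G$ is generalized dicyclic (Lemma~\ref{l:r_not2}), and finally quotes Blackburn's classification of $2$-groups with $\R(G)\neq 1$ (Theorem~\ref{t:blackburn1}) to obtain cases (i)--(iv). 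Unless you either invoke Blackburn's theorem or reprove an equivalent classification, your argument does not close.
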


The motivation for this work is a question of Babai~\cite{Babai77}.
Babai asked which finite groups
are isomorphic to the affine symmetry group of 
an orbit polytope.
(An orbit polytope is a polytope such that its 
(affine) symmetry groups acts transitively on the vertices
of the polytope.)
In joint work with Erik Friese~\cite{FrieseLadisch18}
(continuing our earlier paper~\cite{FrieseLadisch16}), 
we develop a general theory, which shows, among other things, 
that $G$ is isomorphic to the affine symmetry group of an orbit
polytope when
$\NKer_{\reals}(G) = 1$.
When $\NKer_{\reals}(G) > 1$, this may or may not be the case.
Theorem~\ref{main:reals} above is an essential ingredient in our answer
to Babai's question.
Similarly, when $\NKer_{\rats}(G)=1$,
then $G$ can be realized as the affine symmetry group of an 
orbit polytope with vertices having rational coordinates.

The classification of groups
with $\NKer_{\rats}(G) > 1$ is more complicated.
To state it, we first describe a special type of such 
groups.

\begin{mainlem}\label{main:pq}
  Let $p\neq q$ be primes, let
  $P = \erz{g} \times P_0$ be an abelian $p$-group
  and $Q$ an abelian $q$-group.
  Suppose $P$ acts on $Q$ such that
  $x^g = x^k$ for all $x\in Q$
  and some integer $k$ independent of $x\in Q$, 
  and such that $\C_P(Q) = \erz{ g^{p^c} } \times P_0$
  for some $c\geq 1$.
  Suppose that $p^d = \ord( g^{p^c} )$ is the exponent 
  of $\C_P(Q)$,
  and that $(q-1)_p$, the $p$-part of $q-1$,
  divides $p^d$.
  If $p=(q-1)_p =2$,
  assume additionally that $(q^2-1)_2$ divides $2^d$.
  Then for the semidirect product $G=PQ$,
  we have $\NKer_{\rats}(G) \cap \erz{g} \neq 1$.  
\end{mainlem}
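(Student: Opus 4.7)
The plan is to show that $h := g^{p^{c+d-1}}$, the unique element of order $p$ in the cyclic $p$-group $\langle g \rangle$, belongs to $\NKer_{\rats}(G)$; since any nontrivial subgroup of $\langle g \rangle$ contains $\langle h \rangle$, this will give $\NKer_{\rats}(G) \cap \langle g \rangle > 1$. The element $h$ lies in $\langle g^{p^c}\rangle \subseteq \C_P(Q)$, hence is central in $G$. By the character-theoretic description of $\NKer_{\rats}(G)$ established in Section~\ref{sec:defnker}, it will suffice to prove that for every $\chi \in \Irr(G)$ with $h \notin \ker \chi$, the Wedderburn component of $\rats G$ corresponding to $\chi$ is a division ring.

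The first step is Clifford theory for $Q \nteq G$. Since $p \ne q$, Schur--Zassenhaus splits $I_G(\lambda) = I_P(\lambda) \ltimes Q$, with $I_P(\lambda) = \langle g^{p^{c(\lambda)}}\rangle \times P_0$ for some $c(\lambda) \in \{0, \dotsc, c\}$, and provides a canonical extension $\tilde\lambda$ of $\lambda$ trivial on $I_P(\lambda)$. Every $\chi$ above $\lambda$ then takes the form $\chi = (\tilde\lambda\, \psi)^G$ with $\psi = \psi_1 \psi_2 \in \Irr(I_P(\lambda))$, and $\chi(1) = p^{c(\lambda)}$. Centrality of $h$ and the induced-character formula give $\chi(h) = \chi(1)\,\psi_1(h)$, so $h \notin \ker\chi$ forces $\psi_1$ to be faithful on $\langle g^{p^{c(\lambda)}}\rangle$ (which has order $p^{c+d-c(\lambda)}$). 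Linear characters ($c(\lambda) = 0$) automatically yield field components, so the focus narrows to $c(\lambda) \ge 1$.

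In that case $G/I_G(\lambda) \iso \langle g \rangle/\langle g^{p^{c(\lambda)}}\rangle$ is cyclic of order $p^{c(\lambda)} = \chi(1)$, and $\theta := \tilde\lambda\, \psi$ is a linear character of the abelian group $I_G(\lambda)$ inducing $\chi$ irreducibly. I will invoke the standard Galois-descent description of Wedderburn components: writing $q^a = \ord(\lambda)$ and $m = c+d-c(\lambda)$, and using that $\rats(\zeta_{q^a}) \cap \rats(\zeta_{p^m}) = \rats$ for $p \ne q$, a Galois-stabiliser computation yields $[\rats(\theta):\rats(\chi)] = p^{c(\lambda)}$ and identifies the component as a cyclic crossed product
\[
A_\chi \iso \bigl(\rats(\theta)/\rats(\chi),\ \sigma,\ \beta\bigr), \qquad \beta = \theta(g^{p^{c(\lambda)}}) = \psi_1(g^{p^{c(\lambda)}}),
\]
where $\sigma$ generates $\Gal(\rats(\theta)/\rats(\chi))$, acting as the $k$-th power on $\zeta_{q^a}$ and trivially on $\beta$. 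Since $\beta$ is a primitive $p^{c+d-c(\lambda)}$-th root of unity, $A_\chi$ is a division ring if and only if the class of $\beta$ in $\rats(\chi)^{\times}/N_{\rats(\theta)/\rats(\chi)}(\rats(\theta)^{\times})$ has order exactly $p^{c(\lambda)}$.

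Verifying this non-norm property will be the main obstacle, and it is precisely where the hypothesis $(q-1)_p \mid p^d$ enters. It first forces $c \le d$, since $p^c$ is the order of $k$ in $(\ints/\exp(Q))^{\times}$, whose $p$-part divides $(q-1)_p$. The rest of the non-norm verification I intend to carry out locally at $q$: the extension $\rats(\theta)/\rats(\chi)$ becomes, at a prime above $q$, a totally ramified cyclic extension of degree $p^{c(\lambda)}$ generated by $\zeta_{q^a}$, while $\beta$ lives in the unramified $p$-cyclotomic part. The bound $(q-1)_p \le p^d$ controls the intersection $\rats_q(\zeta_{q^a}) \cap \mu_{p^{\infty}} = \mu_{p^d}$, so that $\beta$, a root of unity of order $p^{c+d-c(\lambda)} > p^d$, cannot be a norm of a power lower than $p^{c(\lambda)}$; local class field theory then pins the local Schur index at $q$ to $p^{c(\lambda)}$. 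An appeal to Brauer--Hasse--Noether will then yield the global Schur index $m_\rats(\chi) = p^{c(\lambda)} = \chi(1)$, so $A_\chi$ is a division ring, completing the proof.
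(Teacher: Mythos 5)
Your route is genuinely different from the paper's and is, in outline, workable. The paper deduces Lemma~\ref{main:pq} from Lemma~\ref{l:allab_suff}, whose engine is Lemma~\ref{l:qsi_allab}: the local index $m_q(\chi)$ is computed as a ratio $\ell/k$ of residue degrees, using Benard's theorem on blocks with cyclic defect groups (via Lemma~\ref{l:localsi_abbg}) together with the ramification theory of $\rats_q(\chi,\theta)/\rats_q(\chi)$; the hypothesis $(q-1)_p\mid p^d$ then enters through an elementary computation with multiplicative orders. You instead present the Wedderburn component explicitly as a cyclic crossed product $\bigl(\rats(\theta)/\rats(\chi),\sigma,\beta\bigr)$ and evaluate its local invariant at $q$ by a norm-residue computation, finishing with the local--global principle (which is Lemma~\ref{l:sifacts}\ref{it:globloc}). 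Both arguments ultimately pin down the same quantity, $m_q(\chi)=p^c=\chi(1)$. What your approach buys is independence from modular representation theory (no Brauer characters, no Benard); what it costs is that you must justify the crossed-product presentation and carry out the class-field-theoretic step yourself, which is precisely where your sketch is least precise.

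Two points in that crux need repair, though neither is fatal. First, by Lemma~\ref{l:specialauts} every nontrivial $\lambda\in\Lin(Q)$ has $c(\lambda)=c$, so $\beta=\psi_1(g^{p^c})$ has order exactly $p^d$, not $p^{c+d-c(\lambda)}>p^d$ as you assert; and the relevant cyclotomic intersection is not $\rats_q(\zeta_{q^a})\cap\mu_{p^\infty}$ (which equals $\mu_{(q-1)_p}$) but the group of $p$-power roots of unity in the completion $K$ of $\rats(\chi)$ above $q$, whose residue field has order $q^f$ with $f$ the multiplicative order of $q$ modulo $p^d$. The hypothesis $(q-1)_p\mid p^d$ is exactly what forces $(q^f-1)_p=p^d$, hence $\mu_{p^\infty}(K)=\mu_{p^d}=\langle\beta\rangle$; if $(q-1)_p>p^d$ the field $K$ would contain higher $p$-power roots of unity, $\beta$ would become a $p$-th power of a root of unity in $K$, and the algebra would split partially. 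Second, the correct mechanism is not that ``$\beta$ cannot be a norm'' because its order exceeds $p^d$; rather, $L/K$ is totally and tamely ramified cyclic of degree $p^c$, so by local class field theory the unit group $O_K^\times$ surjects onto $K^\times/N_{L/K}(L^\times)\cong\ints/p^c$, the principal units (a pro-$q$ group) have trivial image in this $p$-group, hence $\mu_{p^d}=\mu_{p^\infty}(K)$ surjects onto it and its generator $\beta$ has order exactly $p^c$ modulo norms. With these corrections your argument closes.
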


Notice that the assumption on the action of $g$ on $Q$
and $\card{P/\C_P(Q)} = p^c$ imply that
$p^c $ divides $q-1$, and that the multiplicative order
of $k$ modulo the exponent of $Q$ is just $p^c$.
As we will see later, 
$1\neq \NKer_{\rats}(G) \leq \erz{g^{p^c}}$.
Whenever we mention ``groups as in 
Lemma~\ref{main:pq}'', we also use the notation 
established in the statement of Lemma~\ref{main:pq}.

\begin{mainthm}\label{main:qclass}
  Let $G$ be a finite group. 
  Then $\NKer_{\rats}(G)\neq 1$
  if and only if at least one of the following holds:
  \begin{enumthm}
  \item \label{it:m_ab}%
        $G$ is abelian and $G \neq \{1\}$.
  \item \label{it:m_nilp}%
        $G = S \times A$,
        where $S$ is a $2$-group of exponent $4$
        which appears on
        the list from Theorem~\ref{main:reals},
        the group $A$ is abelian of odd order,
        and the multiplicative order of\/
        $2$ modulo $\card{A}$ is odd.
  \item \label{it:m_gendic}%
        $G$ is generalized dicyclic.
  \item \label{it:m_allab}%
        $G = (PQ) \times B$,
        where the subgroups
        $P\in \Syl_p(G)$, $Q\in \Syl_q(G)$ and
        $B$ are abelian, 
        $PQ$ is as in Lemma~\ref{main:pq},
        and the $p$-part of the multiplicative order
        of $q$ modulo $\card{B}$ divides the multiplicative
        order of $q$ modulo $p^d$.
  \item \label{it:m_dirprod_h}%
        $G = Q_8 \times (C_2)^r \times H$,
        where $H$ is as in \ref{it:m_allab} and has odd order,
        and the multiplicative order of\/
        $2$ modulo $\card{H}$ is odd.
  \end{enumthm}
\end{mainthm}

Case~\ref{it:m_nilp} contains the groups
$G = Q_8 \times( C_2)^r \times A$,
for which $\rats G$ is a direct product of division rings,
as classified by Sehgal~\cite{Sehgal75}.

An important tool in the proofs
of Theorems~\ref{main:reals} and~\ref{main:qclass}
is Blackburn's classification of finite groups
in which all nonnormal subgroups have a nontrivial 
intersection~\cite{Blackburn66}.
As we will see below, $\NKer_{\crp{K}}(G)$
is always contained in the intersection 
of all nonnormal subgroups of $G$.
While the proof of Theorem~\ref{main:reals} is relatively elementary,
the proof of Theorem~\ref{main:qclass} also depends on some
deep facts about division algebras and Schur indices.
Recently, Erik Friese found a proof of 
Theorem~\ref{main:reals} which does not depend on Blackburn's
classification~\cite[Theorem~6.2.3]{Friese18diss}.

The paper is organized as follows: 
In Section~\ref{sec:skewlin}, we review some basic facts
about representations and characters over fields not necessarily
algebraically closed, and in particular Schur indices.
We also introduce the auxiliary concept of
\emph{skew-linear characters}.
In Section~\ref{sec:defnker}, we define $\NKer_{\crp{K}}(G)$
and prove some elementary properties.
In Section~\ref{sec:dedekind}, we consider Dedekind groups
(groups such that all subgroups are normal).
In such groups, we have either 
$\NKer_{\crp{K}}(G) = 1$, or $\NKer_{\crp{K}}(G)=G$,
where the latter are exactly the groups such that 
$\crp{K}G$ is a direct product of division rings.
Finally, Section~\ref{sec:classr} 
contains the proof of Theorem~\ref{main:reals},
and Section~\ref{sec:classq} the (long) proof of
Theorem~\ref{main:qclass}.

\section{Skew-linear characters}
\label{sec:skewlin}

Let $G$ be a finite group. 
For simplicity, assume that $\crp{K}\subseteq \compl$
and write $\Irr G$ for the set of irreducible complex
characters of $G$.
We begin by reviewing the relation between the representation
theory of $G$ over $\crp{K}$ and over 
$\compl$~\cite[\S~38]{huppCT}\cite[Chapter~10]{isaCTdov}.

By Maschke's theorem and general Wedderburn-Artin theory,
the group algebra $\crp{K}G$ is the direct product
of simple rings:
\[ \crp{K}G = A_1 \times \dotsm \times A_r.
\]
Each $A_i$ is a simple ideal,
and the set of the $A_i$'s is uniquely determined as the set
of simple ideals of $\crp{K}G$.
The $A_i$'s are called the 
\defemph{block ideals of $\crp{K}G$}.
Each $A_i$ is generated by a centrally primitive idempotent
$e\in \Z(\crp{K}G)$.
By Wedderburn-Artin theory,
each $A_i$ is isomorphic to a matrix ring
over a division ring.

We now relate the above decomposition
to the complex irreducible characters of $G$.
Recall that the \defemph{Schur index}
of $\chi\in \Irr G$ over $\crp{K}$
is the smallest positive integer $m = m_{\crp{K}}(\chi)$ 
such that
$m\chi $ is afforded by a representation with entries in
$\crp{K}(\chi)$, the field generated by $\crp{K}$
and the values of $\chi$.

\begin{lemma}\label{l:reps_basics}
  Let $\chi\in \Irr G$.
  \begin{enumthm}
  \item \label{it:uniblock}
        There is a unique block ideal $A = A_{\crp{K}}(\chi)$ 
        of\/ $\crp{K}G$
        such that $\chi(A)\neq 0$.
  \item \label{it:gal_conj} 
        Let $\psi\in \Irr G$.
        Then
        $\psi(A)\neq 0$ if and only if
        $\psi$ and $\chi$ are Galois conjugate over $\crp{K}$,
        that is, $\psi=\chi^{\alpha}$ for some
        $\alpha\in \Gal( \crp{K}(\chi)/\crp{K} )$.
  \item \label{it:cent_bl}
        Write $A\iso \mat_n(D)$ for some 
        division ring~$D=D_{\crp{K}}(\chi)$.
        Then $\Z(A)\iso \Z(D)\iso \crp{K}(\chi)$.
  \item \label{it:si_deg}
        $\fdeg{D:\Z(D)} = m_{\crp{K}}(\chi)^2$
        and $\chi(1) = nm_{\crp{K}}(\chi)$.
  \end{enumthm}
\end{lemma}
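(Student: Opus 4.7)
The plan is to realize the block decomposition of $\crp{K}G$ via the Galois action on the complex block decomposition of $\compl G$. Let $\crp{L}\subseteq\compl$ be a finite Galois extension of $\crp{K}$ containing all values of every $\psi\in\Irr G$, and let $\Gamma=\Gal(\crp{L}/\crp{K})$. Then $\Gamma$ acts on $\crp{L}G$ via its action on coefficients, with fixed subalgebra $\crp{K}G$. The primitive central idempotents of $\compl G$ are $e_\psi=\tfrac{\psi(1)}{\card G}\sum_{g\in G}\psi(g^{-1})g$; they already lie in $\crp{L}G$, and $e_\psi^\sigma=e_{\psi^\sigma}$ for $\sigma\in\Gamma$. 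Hence the primitive central idempotents of $\crp{K}G$ are the orbit sums $e_\Omega=\sum_{\psi\in\Omega}e_\psi$, as $\Omega$ ranges over $\Gamma$-orbits in $\Irr G$, and the corresponding block ideal $A_\Omega$ of $\crp{K}G$ satisfies $A_\Omega\otimes_\crp{K}\compl\iso\prod_{\psi\in\Omega}B_\psi$, where $B_\psi\iso\mat_{\psi(1)}(\compl)$ is the simple complex block afforded by $\psi$. Parts \ref{it:uniblock} and \ref{it:gal_conj} follow at once: $\chi$ is nonzero on $A_\Omega$ iff $e_\chi$ is a summand of $e_\Omega$, iff $\chi\in\Omega$; and the $\Gamma$-orbits on $\Irr G$ are exactly the Galois conjugacy classes over $\crp{K}$.

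For \ref{it:cent_bl}, write $A=A_\Omega\iso\mat_n(D)$, so $\Z(A)=\Z(D)$ is a field. The identification $\Z(A)\otimes_\crp{K}\compl\iso\prod_{\psi\in\Omega}\Z(B_\psi)\iso\compl^{\card\Omega}$, combined with the transitive $\Gamma$-action on $\Omega$ and Galois descent, identifies $\Z(A)$ with the fixed field of the stabilizer of $\chi$ in $\Gamma$, namely $\crp{K}(\chi)$.

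For \ref{it:si_deg}, set $m^2=\fdeg{D:\Z(D)}$, a square by Wedderburn. Comparing $\dim_\crp{K} A=n^2m^2[\crp{K}(\chi):\crp{K}]$ with $\dim_\compl(A\otimes_\crp{K}\compl)=\sum_{\psi\in\Omega}\psi(1)^2=[\crp{K}(\chi):\crp{K}]\,\chi(1)^2$ yields $nm=\chi(1)$. To identify $m$ with $m_\crp{K}(\chi)$, extend scalars from $\crp{K}$ to $\crp{K}(\chi)$: the orbit $\Omega$ splits into singletons, and the block of $\crp{K}(\chi)G$ corresponding to $\chi$ is again $\mat_n(D)$, now viewed as a $\crp{K}(\chi)$-algebra (no further splitting occurs, because $\Z(D)=\crp{K}(\chi)$ by \ref{it:cent_bl}). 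Any $\crp{K}(\chi)$-representation with character $m'\chi$ is a module over this block, hence a direct sum of copies of its unique simple module $D^n$, which has $\crp{K}(\chi)$-dimension $nm^2=m\,\chi(1)$ and therefore character $m\chi$. Thus $m\mid m'$, and $m'=m$ is attained, so $m_\crp{K}(\chi)=m$.

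The main obstacle will be the bookkeeping in the last step: the Schur index is measured over $\crp{K}(\chi)$, not over $\crp{K}$, so one must separate the contributions of the orbit size $[\crp{K}(\chi):\crp{K}]$ and of the Schur index $m$ itself in the dimension formulas, and verify that extending scalars from $\crp{K}$ to $\crp{K}(\chi)$ does not split $D$. Both are handled by \ref{it:cent_bl}; once the Galois-orbit picture of the blocks is in place, everything else reduces to dimension counting against the Wedderburn structure theorem.
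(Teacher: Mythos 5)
Your proposal is correct. Note, though, that the paper does not actually prove this lemma: its ``proof'' is a citation of standard results (Huppert, \emph{Character Theory}, Theorems~38.1 and~38.15). What you have written is essentially a self-contained reconstruction of those standard results: realize the blocks of $\crp{K}G$ as Galois-orbit sums of the primitive central idempotents $e_\psi$ of $\compl G$ (using that $\Gal(\crp{L}/\crp{K})$ acts coefficientwise with fixed algebra $\crp{K}G$, and that $\crp{K}(\chi)/\crp{K}$ is Galois, being contained in a cyclotomic extension), then get \textup{(i)}--\textup{(iii)} from the orbit picture and \textup{(iv)} from a Wedderburn dimension count plus a module-theoretic identification of the Schur index. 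That all works. The one step you state rather tersely is that after extending scalars from $\crp{K}$ to $\crp{K}(\chi)$ the block belonging to $\chi$ is still $\mat_n(D)$ with the same $D$: the parenthetical ``no further splitting occurs, because $\Z(D)=\crp{K}(\chi)$'' deserves a sentence, e.g.\ that $A\otimes_{\crp{K}}\crp{K}(\chi)$ decomposes along $\crp{K}(\chi)\otimes_{\crp{K}}\crp{K}(\chi)\iso\crp{K}(\chi)^{[\crp{K}(\chi):\crp{K}]}$ into Galois twists of $A$, each isomorphic to $\mat_n(D)$ as a ring; alternatively this is exactly what the paper's Lemma~\ref{l:charfield_bl} provides, since $A\iso\crp{K}(\chi)Ge_{\chi}$ via $a\mapsto ae_{\chi}$, so you could simply invoke (or prove) that isomorphism at this point. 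With that small repair the argument is complete, and it is arguably more informative than the paper's citation, at the cost of redoing textbook material.
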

\begin{proof}
  This is standard~\cite[Theorems~38.1 and~38.15]{huppCT}.
\end{proof}

It follows from Lemma~\ref{l:reps_basics}
that $A = A_{\crp{K}}(\chi)$ is itself a division ring if and only if 
$\chi(1)=m_{\crp{K}}(\chi)$.
In this case, the projection
$\crp{K}G\to A$ defines 
a homomorphism $\phi$
from $G$ into the multiplicative group of $D$.
Notice also that
$\Ker(\phi)=\Ker(\chi)$.
For this reason,
we call a character $\chi$ 
\defemph{skew-linear} (over $\crp{K}$),
if $\chi(1) = m_{\crp{K}}(\chi)$.
Thus skew-linear characters generalize linear characters.
Since $m_{\compl}(\chi)=1$ for all $\chi$, 
skew-linear over $\compl$ is the same as linear.

If $\chi\in \Irr(G)$ is linear, then (trivially)
the reduction to any subgroup
is irreducible and linear.
This fact generalizes to skew-linear characters as follows:

\begin{lemma}\label{l:res_skewlin}
  Let $\chi\in \Irr(G)$ be skew-linear over 
  the field\/~$\crp{K}$, and $H\leq G$.
  Then the irreducible constituents of $\chi_H$
  are skew-linear over $\crp{K}$,
  and are Galois conjugate over the field
  $\crp{K}(\chi)$.
\end{lemma}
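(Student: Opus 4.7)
The plan is to exploit that skew-linearity of $\chi$ implies the block of $\crp{K}G$ containing $\chi$ is a division ring $D$ with centre $\crp{K}(\chi)$ (Lemma~\ref{l:reps_basics}), giving a representation $\rho\colon G\to D^{\times}$. I would analyse the $\crp{K}(\chi)$-subalgebra $B$ of $D$ generated by $\rho(H)$: as a finite-dimensional $\crp{K}(\chi)$-subalgebra of a division ring, $B$ is itself a division ring (any nonzero left multiplication on $B$ is injective, hence bijective by finite dimension). Since $B$ is simple, the kernel of the surjection $\crp{K}(\chi)H\twoheadrightarrow B$ is a maximal two-sided ideal, which in the Wedderburn decomposition $\crp{K}(\chi)H=\prod_{k}C_k$ is the product of all but one $C_k$; thus $B$ is isomorphic to a single block $C_{k_0}$ of $\crp{K}(\chi)H$. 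Picking $\eta\in\Irr H$ in the Galois orbit over $\crp{K}(\chi)$ corresponding to $C_{k_0}$, the character $\eta$ is skew-linear over $\crp{K}(\chi)$: $\eta(1)=m_{\crp{K}(\chi)}(\eta)$.

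The $\crp{K}(\chi)H$-action on $D$ (via $\rho$ together with right multiplication by $\Z(D)=\crp{K}(\chi)$) factors through $B$, and since $B$ is a division ring, $D\iso B^{a}$ as a $\crp{K}(\chi)H$-module for some $a$. Computing the complex character of $D$ from two viewpoints -- as a $\crp{K}G$-module, its character is $\chi(1)\sum_{\alpha\in\Gal(\crp{K}(\chi)/\crp{K})}\chi^{\alpha}$ by Lemma~\ref{l:reps_basics}, while as $V^{a}$ with $V=B$ it is $a\,\eta(1)\sum_{\gamma\in\Gal(\crp{K}(\chi,\eta)/\crp{K}(\chi))}\eta^{\gamma}$ -- and restricting the first to $H$ gives
\[
  \chi(1)\sum_{\alpha}\chi_H^{\alpha}
  \;=\;
  a\,\eta(1)\sum_{\gamma}\eta^{\gamma}.
\]
Every irreducible constituent of $\chi_H$ appears on the left at $\alpha=\id$, hence on the right, hence lies in the $\crp{K}(\chi)$-Galois orbit of $\eta$; this gives the Galois-conjugacy claim.

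For skew-linearity over $\crp{K}$: any constituent $\psi$ of $\chi_H$ is a Galois conjugate of $\eta$ over $\crp{K}(\chi)$, so shares its Schur index, giving $m_{\crp{K}(\chi)}(\psi)=\eta(1)=\psi(1)$. The standard divisibility $m_{\crp{K}(\chi)}(\psi)\mid m_{\crp{K}}(\psi)\mid\psi(1)$ (Schur indices only decrease under extension of scalars) then forces $m_{\crp{K}}(\psi)=\psi(1)$, so $\psi$ is skew-linear over $\crp{K}$. The step I expect to need most care is the identification of $B$ with a single block of $\crp{K}(\chi)H$ together with the complex character comparison, since $\chi$ and $\eta$ a priori have their values in different extensions of $\crp{K}$.
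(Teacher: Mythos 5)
Your route is genuinely different from the paper's. The paper stays entirely at the level of characters: it combines the divisibility $m_{\crp{K}}(\chi)\mid \ipcf{\chi_H}{\theta}\,\card{\crp{K}(\chi,\theta):\crp{K}(\chi)}\,m_{\crp{K}}(\theta)$ (Isaacs, Lemma~10.4) with the fact that the $\Gal(\crp{K}(\chi,\theta)/\crp{K}(\chi))$-conjugates of $\theta$ all occur in $\chi_H$ with the same multiplicity, and squeezes $\chi(1)$ between two expressions to force equality everywhere. Your structural argument through the division ring $D$ is sound in outline: $B$ is a division ring, it is a single block of $\crp{K}(\chi)H$, the corresponding $\eta$ is skew-linear over $\crp{K}(\chi)$, $D\iso B^{a}$ as left $B$-modules, and your final descent $m_{\crp{K}(\chi)}(\psi)\mid m_{\crp{K}}(\psi)\mid \psi(1)$ is the same divisibility the paper invokes in Lemma~\ref{l:subfields}.

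The gap is in the displayed character identity, and since the $\crp{K}(\chi)$-conjugacy claim (and with it the Schur-index transfer) rests on it, it matters. On the left you compute the complex character of $D$ from its $\crp{K}$-structure: that character is $\chi(1)\sum_{\alpha}\chi^{\alpha}$ restricted to $H$, of degree $\chi(1)^{2}\card{\crp{K}(\chi):\crp{K}}$. On the right you compute the complex character of $B^{a}$ from its $\crp{K}(\chi)$-structure, of degree $a\,\eta(1)^{2}\card{\crp{K}(\chi,\eta):\crp{K}(\chi)}=\dim_{\crp{K}(\chi)}D=\chi(1)^{2}$. The two sides differ by the factor $\card{\crp{K}(\chi):\crp{K}}$, so the equation is false whenever $\crp{K}(\chi)\neq\crp{K}$; and if you instead correct the right-hand side to the $\crp{K}$-level character of $B^{a}$, it involves the full $\crp{K}$-Galois orbit of $\eta$, so you could only conclude conjugacy over $\crp{K}$ (also, "appears on the left at $\alpha=\id$" is not a legitimate isolation of one Galois summand at that level), which is too weak for your last step. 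The repair is to work over $\crp{K}(\chi)$ throughout: by Lemma~\ref{l:charfield_bl}, $D\iso\crp{K}(\chi)Ge_{\chi}$ as $\crp{K}(\chi)G$-modules, so the complex character of $D$ computed from its $\crp{K}(\chi)$-structure is $\chi(1)\chi$; restricting to $H$ and comparing with $D\iso B^{a}$ gives $\chi(1)\chi_H=a\,\eta(1)\sum_{\gamma\in\Gal(\crp{K}(\chi,\eta)/\crp{K}(\chi))}\eta^{\gamma}$, from which the $\crp{K}(\chi)$-conjugacy and then skew-linearity over $\crp{K}$ follow exactly as you intend. With that fix your proof is a valid, more structural alternative to the paper's counting argument, and it additionally exhibits the multiplicity pattern of $\chi_H$ explicitly.
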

\begin{proof}
   Let $\theta\in \Irr(H)$ be a constituent of $\chi_H$.
      Then~\cite[Lemma~10.4]{isaCTdov}
      \[ m_{\crp{K}}(\chi)
        \quad \text{divides} \quad 
        \ipcf{\chi_H}{\theta} 
        \cdot
        \card{\crp{K}(\chi, \theta):\crp{K}(\chi)}
        \cdot 
        m_{\crp{K}}(\theta).
      \]
      
      Let $\sigma \in \Gal( \crp{K}(\chi,\theta)/\crp{K}(\chi))$.
      Then $\ipcf{\chi_H}{\theta^{\sigma}} = \ipcf{\chi_H}{\theta}$.
      Thus each of the 
      $\card{\crp{K}(\chi,\theta):\crp{K}(\chi)}$
      characters $\theta^{\sigma}$ occurs in $\chi_H$ with multiplicity
      $\ipcf{\chi_H}{\theta}$.
      It follows that
      \begin{align*} 
        \ipcf{\chi_H}{\theta}
        \cdot 
        \card{\crp{K}(\chi, \theta):\crp{K}(\chi)}
        \cdot
        \theta(1)
        &\leq \chi(1) 
        = m_{\crp{K}}(\chi)  \\
        &\leq \ipcf{\chi_H}{\theta} 
              \cdot
              \card{\crp{K}(\chi, \theta):\crp{K}(\chi)}
              \cdot
              m_{\crp{K}}(\theta)
        \\
        & \leq 
        \ipcf{\chi_H}{\theta}
                \cdot 
                \card{\crp{K}(\chi, \theta):\crp{K}(\chi)}
                \cdot
                \theta(1).                
      \end{align*}
      This implies that equality holds throughout,
      in particular, $\theta(1) = m_{\crp{K}}(\theta)$
      and $\chi_H = \ipcf{\chi_H}{\theta} \sum \theta^{\sigma}$,
      the sum running over 
      $\sigma \in \Gal( \crp{K}(\chi,\theta)/\crp{K}(\chi))$.  
\end{proof}

In the rest of this section, we record some
(mostly well known) facts about Schur indices
and blocks of group algebras for later reference.

Recall that
\[ e_{\chi}
   = \frac{ \chi(1) }{ \card{G} }
     \sum_{g\in G} \chi( g^{-1} ) g
\]
is the centrally primitive idempotent
in $\compl G$ corresponding to $\chi\in \Irr G$.
The following simple observation will sometimes be useful.
Notice that it provides an alternative proof of 
$\Z(A)\iso \crp{K}(\chi)$.

\begin{lemma}\label{l:charfield_bl}
  Let $\chi\in \Irr G$
  and let $A$ be the block ideal of\/ $\crp{K}G$
  such that $\chi(A)\neq 0$.
  Then
  \[ A \iso \crp{K}(\chi)Ge_{\chi}
      \quad\text{by}\quad
     A \ni a \mapsto ae_{\chi}. 
  \]
\end{lemma}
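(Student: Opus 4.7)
The plan is to exhibit the map $\phi\colon A\to \crp{K}(\chi)Ge_{\chi}$, $a\mapsto ae_{\chi}$, and verify in turn that it is a well-defined ring homomorphism, that it is surjective, and that it is injective. The key preliminary facts I will use are that $e_{\chi}\in \crp{K}(\chi)G$ (clear from the formula for $e_{\chi}$), that the central primitive idempotent $e_A\in \crp{K}G$ corresponding to the block $A$ is the Galois trace $e_A=\sum_{\sigma} e_{\chi^{\sigma}}$ (with $\sigma$ running over $\Gal(\crp{K}(\chi)/\crp{K})$, using Lemma~\ref{l:reps_basics}\ref{it:gal_conj}), and that the idempotents $e_{\chi^{\sigma}}$ are pairwise orthogonal and central.

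For the homomorphism property, $\phi(ab)=abe_{\chi}=abe_{\chi}^{2}=(ae_{\chi})(be_{\chi})=\phi(a)\phi(b)$ since $e_{\chi}$ is central and idempotent in $\compl G$; and $\phi$ is obviously $\crp{K}$\nbd linear. For surjectivity I will take $y=xe_{\chi}\in \crp{K}(\chi)Ge_{\chi}$ and set $a:=\sum_{\sigma} \sigma(y)$, where $\sigma$ ranges over $\Gal(\crp{K}(\chi)/\crp{K})$ acting coefficientwise on $\crp{K}(\chi)G$. Being Galois-invariant, $a$ lies in $\crp{K}G$; since $\sigma(e_{\chi})=e_{\chi^{\sigma}}$ we have $\sigma(y)\in \crp{K}(\chi)Ge_{\chi^{\sigma}}\subseteq \crp{K}(\chi)Ge_{A}$, so $a\in \crp{K}G\cap \crp{K}(\chi)Ge_{A}=\crp{K}Ge_{A}=A$. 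Multiplying by $e_{\chi}$ then kills all the summands with $\sigma\neq \id$ by orthogonality of the $e_{\chi^{\sigma}}$, leaving $ae_{\chi}=ye_{\chi}=y$.

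For injectivity, if $a\in A$ satisfies $ae_{\chi}=0$, then applying any $\sigma\in \Gal(\crp{K}(\chi)/\crp{K})$ and using $\sigma(a)=a$ (as $a\in \crp{K}G$) yields $ae_{\chi^{\sigma}}=0$. Summing over $\sigma$ gives $ae_A=0$; but $a\in A=Ae_A$, so $a=0$.

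I do not expect any real obstacle; the single point that requires a moment's care is exhibiting a preimage in $\crp{K}G$ (not merely in $\crp{K}(\chi)G$) in the surjectivity step, which is handled cleanly by the Galois-trace construction above. Alternatively one could finish by a dimension count, since $\dim_{\crp{K}} A=\chi(1)^{2}\,\card{\crp{K}(\chi):\crp{K}}$ by Lemma~\ref{l:reps_basics}\ref{it:si_deg} and $\dim_{\crp{K}(\chi)}\crp{K}(\chi)Ge_{\chi}=\chi(1)^{2}$ for the same reason applied over $\crp{K}(\chi)$, so once injectivity is in hand the map is automatically surjective; but the explicit lift via the Galois trace is short enough to prefer.
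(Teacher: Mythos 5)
Your proof is correct and takes essentially the same route as the paper's: both identify the block idempotent of $A$ with the Galois sum $\sum_{\sigma} e_{\chi^{\sigma}}$ and use the coefficientwise Galois trace to pass back from $\crp{K}(\chi)Ge_{\chi}$ to $A$. The paper simply writes that trace down directly as the two-sided inverse of $a\mapsto ae_{\chi}$, whereas you split the argument into an injectivity computation and a surjectivity lift, which is only a cosmetic difference.
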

\begin{proof}
  Set 
  \[ e:=\sum_{\alpha \in \Gal(\crp{K}(\chi)/\crp{K})}
     e_{\chi^{\alpha}}.
  \]  
  We claim that $A = \crp{K}Ge$.
  We can decompose $1$ into a sum of primitive idempotents
  in $\Z(\crp{K}G) $,
  and then decompose further in $\Z(\compl G)$.
  Thus there is a unique primitive idempotent $f$
  in $\Z(\crp{K}G)$
  such that $fe_{\chi} = e_{\chi}$.
  But then also
  $fe_{\chi^{\alpha}} = e_{\chi^{\alpha}}$
  for all $\alpha \in \Gal(\crp{K}(\chi)/\crp{K})$
  and thus $fe=e$.
  On the other hand, $e_{\chi}\in \crp{K}(\chi) G$ and
  $e\in \crp{K}G$, and thus $f=e$.
  This shows that $A=\crp{K}Ge$ as claimed.
  
  For $\alpha\in \Gal(\crp{K}(\chi)/\crp{K})$,
  \[ b = \sum_g b_g g
     \in
     \crp{K}(\chi)Ge_{\chi} 
     \quad \text{implies}\quad
     b^{\alpha}:= \sum_{g} b_g^{\alpha} g
     \in \crp{K}(\chi)G e_{\chi^{\alpha}}.
  \]
  Using this, it is straightforward to check that
  \[ \crp{K}(\chi) G e_{\chi}
     \ni b 
     \mapsto 
     \sum_{\alpha\in \Gal( \crp{K}(\chi)/\crp{K} ) }
     b^{\alpha} \in \crp{K} G e
  \]
  yields the inverse of the map
  $a\mapsto ae_{\chi}$.
\end{proof}

Since we will often have to consider characters 
of direct products of groups,
and the corresponding blocks of the group 
algebra, we record the following for later reference.

\begin{lemma}\label{l:blocksdirprod}
  Let $G = U \times V$ be a direct product of groups,
  $\sigma\in \Irr U$ and $\tau\in \Irr V$.
  Then 
  $\chi = \sigma \times \tau\in \Irr G$
  and 
  $\crp{K}(\chi) = \crp{K}(\sigma,\tau)$.
  Let 
  $ A_{ \crp{K} } ( \chi ) $ 
  be the block ideal of\/ $\crp{K}G$ corresponding to $\chi$,
  and 
  $ A_{ \crp{K} } ( \sigma ) $ and 
  $ A_{ \crp{K} } ( \tau ) $
  the block ideals of\/ $\crp{K}U$ and\/ $\crp{K}V$
  corresponding to $\sigma$ and $\tau$.
  Then
  \[ A_{\crp{K}}(\chi)
      \iso \big( A_{\crp{K}}(\sigma) 
               \otimes_{ \crp{K}(\sigma) } 
               \crp{K}(\chi)
            \big)
            \otimes_{ \crp{K}(\chi) }
            \big( A_{\crp{K}}(\tau) 
               \otimes_{ \crp{K}(\tau) }
               \crp{K}(\chi)
            \big).
  \]
\end{lemma}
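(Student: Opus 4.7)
The plan is to reduce everything to Lemma~\ref{l:charfield_bl} so that all three block ideals are described as the $\crp{K}(\chi)$-span of the corresponding complex central primitive idempotents, and then to match up the tensor products using the standard identification $\crp{K}[U\times V] \iso \crp{K}U \otimes_{\crp{K}} \crp{K}V$.

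First, I would dispose of the easy statements. That $\chi=\sigma\times \tau$, defined by $\chi(u,v)=\sigma(u)\tau(v)$, is an irreducible character of $G=U\times V$ is standard (e.g.\ by checking $\ipcf{\chi}{\chi}_G=1$). For the equality of character fields, the inclusion $\crp{K}(\chi)\subseteq \crp{K}(\sigma,\tau)$ is immediate from the formula for $\chi$, and the reverse inclusion follows because $\sigma(u)=\chi(u,1)/\tau(1)$ and $\tau(v)=\chi(1,v)/\sigma(1)$ both lie in $\crp{K}(\chi)$.

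Next, the algebraic identifications. In $\compl G=\compl U\otimes_{\compl}\compl V$ the central primitive idempotents satisfy $e_{\chi}=e_{\sigma}\otimes e_{\tau}$, as one checks from the defining formula (or from the fact that $e_{\chi}$ acts as the identity on $\sigma\times\tau$ and as zero on all other irreducibles of $G$). Using Lemma~\ref{l:charfield_bl}, I identify
\[
  A_{\crp{K}}(\chi)\iso \crp{K}(\chi)Ge_{\chi},\quad
  A_{\crp{K}}(\sigma)\iso \crp{K}(\sigma)Ue_{\sigma},\quad
  A_{\crp{K}}(\tau)\iso \crp{K}(\tau)Ve_{\tau}.
\]
After extending scalars along $\crp{K}(\sigma)\hookrightarrow \crp{K}(\chi)$, respectively $\crp{K}(\tau)\hookrightarrow \crp{K}(\chi)$, one obtains
\[
  A_{\crp{K}}(\sigma)\otimes_{\crp{K}(\sigma)}\crp{K}(\chi) \iso \crp{K}(\chi)Ue_{\sigma},
  \quad
  A_{\crp{K}}(\tau)\otimes_{\crp{K}(\tau)}\crp{K}(\chi) \iso \crp{K}(\chi)Ve_{\tau},
\]
since $\crp{K}(\chi)\otimes_{\crp{K}(\sigma)} \crp{K}(\sigma)U = \crp{K}(\chi)U$ and $e_{\sigma}\in \crp{K}(\sigma)U$, and similarly for $V$.

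Finally, under $\crp{K}(\chi)G=\crp{K}(\chi)U\otimes_{\crp{K}(\chi)}\crp{K}(\chi)V$, multiplication by $e_{\chi}=e_{\sigma}e_{\tau}$ preserves the tensor decomposition, and one gets
\[
  \crp{K}(\chi)Ge_{\chi}
  \iso \bigl(\crp{K}(\chi)Ue_{\sigma}\bigr)
       \otimes_{\crp{K}(\chi)}
       \bigl(\crp{K}(\chi)Ve_{\tau}\bigr),
\]
which combined with the previous step gives exactly the stated formula. The only subtle point—and the place I would be most careful—is the base-change identification $\crp{K}(\chi)Ue_{\sigma}\iso A_{\crp{K}}(\sigma)\otimes_{\crp{K}(\sigma)}\crp{K}(\chi)$; one has to check that scalar extension of $\crp{K}(\sigma)Ue_{\sigma}$ to $\crp{K}(\chi)$ really produces $\crp{K}(\chi)Ue_{\sigma}$ inside $\crp{K}(\chi)U$ (and not a genuinely bigger algebra). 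This is clear because $\crp{K}(\sigma)Ue_{\sigma}$ is a $\crp{K}(\sigma)$-direct summand of $\crp{K}(\sigma)U$ (it is a block ideal), so the extension of scalars is flat and the image is precisely the block ideal generated by $e_{\sigma}$ in $\crp{K}(\chi)U$.
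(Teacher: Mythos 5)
Your proposal is correct and follows essentially the same route as the paper: both reduce to Lemma~\ref{l:charfield_bl}, identify $e_{\chi}=e_{\sigma}\otimes e_{\tau}$ under $\crp{K}(\chi)U\otimes_{\crp{K}(\chi)}\crp{K}(\chi)V\iso \crp{K}(\chi)G$, and perform the same base-change identification $\crp{K}(\sigma)Ue_{\sigma}\otimes_{\crp{K}(\sigma)}\crp{K}(\chi)\iso\crp{K}(\chi)Ue_{\sigma}$. The extra care you take with the flatness/direct-summand point is exactly what the paper settles by comparing dimensions, so nothing is missing.
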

\begin{proof}
  The irreducible characters of $U\times V$
  are exactly the characters of the form
  $\chi= \sigma \times \tau$, 
  with $\sigma\in \Irr U$ and 
  $\tau \in \Irr V$ \cite[Theorem~4.21]{isaCTdov}.
  Since $\chi((u,1_V))= \sigma(u)\tau(1_V)$ for $u\in U$
  and similarly $\chi((1_U,v))=\sigma(1_U)\tau(v)$
  for $v\in V$,
  we see that $\crp{K}(\chi) = \crp{K}(\sigma,\tau)$.
  
  Set $\crp{L}=\crp{K}(\chi)$.
  The natural isomorphism
  \[ \crp{L}U \otimes_{\crp{L}} \crp{L}V
     \to \crp{L}G,
     \quad 
     \sum_u a_u u \otimes \sum_v b_u v 
     \mapsto \sum_{u,v} a_ub_v(u,v)
  \]
  sends $e_{\sigma}\otimes e_{\tau}$ to 
  $e_{\chi}$
  and thus induces an isomorphism
  \[ \crp{L}Ue_{\sigma}
     \otimes_{\crp{L}}
     \crp{L}Ve_{\tau}
     \iso \crp{L}G e_{\chi}
  \]
  (by comparing dimensions).
  By Lemma~\ref{l:charfield_bl},
  the right hand side is isomorphic to
  $A_{\crp{K}}(\chi)$,
  and on the left hand side
  we have
  \[ \crp{L}U e_{\sigma}
     \iso 
     \crp{K}(\sigma)U e_{\sigma} 
       \otimes_{ \crp{K}(\sigma) }
       \crp{L}
     \iso 
     A_{\crp{K}}(\sigma)
       \otimes_{ \crp{K}(\sigma) }
       \crp{L},      
  \]
  and similarly for the other factor.
  The result follows.
\end{proof}

In Section~\ref{sec:classq},
we need several deep facts about Schur indices,
which we collect now.
For a prime $q$, we write
$m_q(\chi) := m_{\rats_q}(\chi)$,
where $\rats_q$ denotes the field of $q$-adic numbers.
Sometimes, it will be convenient to use this notation
also for the ``infinite prime'', that is,
$m_{\infty}(\chi):= m_{\reals}(\chi)$.

\begin{lemma}\label{l:sifacts}
  Let $\chi\in \Irr(G)$.
  \begin{enumthm}
  \item \label{it:globloc}
       $m_{\rats}(\chi)$ is the least common multiple
       of the local indices
       $m_q(\chi)$, where $q$ runs through all
       primes, including the infinite one.
       \cite[(32.19)]{reinerMO} 
  \item \label{it:localsi_div}
       $m_{\reals}(\chi)$ and $m_2(\chi)$ divide $2$,
       and $m_q(\chi)$ divides $q-1$ for odd $q$.
       \cite[Theorem~4.3, Corollary~5.5]{Yamada74}
  \item \label{it:localsi_brauer}
       Let $\phi$ be an irreducible Brauer character 
       for the prime $q$, and $d_{\chi \phi}$
       the decomposition number.
       Then 
       $m_q(\chi)$ divides 
       $d_{\chi \phi} \card{\rats_q(\chi,\phi):\rats_q(\chi)}$.
       \cite[Theorem~IV.9.3]{Feit82}
  \item \label{it:localsitriv}
       If the finite prime $q$ does not divide $\card{G}$,
       then $m_q(\chi) = 1$.
       \cite[Corollary~IV.9.5]{Feit82}
  \end{enumthm}
\end{lemma}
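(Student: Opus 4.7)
The four parts of this lemma are standard results from the theory of Schur indices and Brauer groups, so my plan is essentially to assemble the appropriate citations and briefly indicate the conceptual sources, rather than reprove anything from scratch.

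For part~\ref{it:globloc}, the plan is to invoke the Hasse--Brauer--Noether--Albert local-global principle for central simple algebras over number fields. Concretely, the block ideal corresponding to $\chi$ is a central simple algebra over $\rats(\chi)$ whose class in the Brauer group is determined by its local invariants at each place of $\rats(\chi)$; the local index at a place over a rational prime $q$ is precisely $m_q(\chi)$ (possibly after adjusting by residue degree, but the $\rats$-Schur index comes out as the lcm of the local $m_q(\chi)$). This is exactly what Reiner~\cite[(32.19)]{reinerMO} records.

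For part~\ref{it:localsi_div}, I would appeal to the explicit description of the Brauer group of a local field: $\mathrm{Br}(\rats_q) \cong \rats/\ints$ and $\mathrm{Br}(\reals) \cong \frac{1}{2}\ints/\ints$. Combined with the Brauer--Speiser type observation that the Schur index of $\chi$ over $\rats_q$ divides $\card{\rats_q(\zeta)\cdot\rats_q(\chi):\rats_q(\chi)}$ for a suitable root of unity $\zeta$, one gets the stated divisibility bounds; these are collected in Yamada~\cite[Theorem~4.3, Corollary~5.5]{Yamada74}, which I would simply cite.

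Part~\ref{it:localsi_brauer} is the deepest of the four, and this is where I would expect the main obstacle if one tried to reprove it: it requires comparing ordinary and modular representations via the decomposition matrix, using that a Brauer character $\phi$ gives a representation over a suitable unramified extension and then exploiting the fact that $m_q(\chi)$ must divide the multiplicity with which any absolutely irreducible modular constituent (counted with Galois orbit size) appears in the reduction of $\chi$. Rather than reconstruct this argument, I would cite Feit~\cite[Theorem~IV.9.3]{Feit82}.

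For part~\ref{it:localsitriv}, the plan is to note that if $q \nmid \card{G}$, then $\rats_q G$ is a maximal order in its own right (Maschke plus the fact that $\ints_q G$ is already a maximal $\ints_q$-order), so every block is a matrix ring over an unramified extension of $\rats_q$; unramified local extensions have trivial Brauer group contribution, forcing $m_q(\chi)=1$. Again, the clean reference is Feit~\cite[Corollary~IV.9.5]{Feit82}. Thus the whole ``proof'' of Lemma~\ref{l:sifacts} should be a one-line remark that each item is classical with the citations already embedded in the statement.
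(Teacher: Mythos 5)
Your proposal matches the paper's treatment exactly: the paper gives no proof of Lemma~\ref{l:sifacts} beyond the citations embedded in its statement (Reiner for the local--global principle, Yamada for the local bounds, Feit for the decomposition-number divisibility and the case $q\nmid\card{G}$), which is precisely your plan of declaring each item classical and citing those same sources. The heuristic sketches you add are harmless and essentially accurate (only the aside that unramified extensions give a ``trivial Brauer group contribution'' is loosely phrased, but you defer to Feit for the actual argument anyway), so nothing further is needed.
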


\begin{cor}\label{c:localindequ}
  Let $\chi\in \Irr(G)$ with 
  $\chi(1) = m_q(\chi)$, 
  where $q$ is a prime number.
  If $H\leq G$ has order not divisible by $q$,
  then any constituent of $\chi_H$ is linear. 
\end{cor}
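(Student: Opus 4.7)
The plan is to combine Lemma~\ref{l:res_skewlin} with Lemma~\ref{l:sifacts}\ref{it:localsitriv}. The hypothesis $\chi(1)=m_q(\chi)$ says exactly that $\chi$ is skew-linear over the $q$-adic field $\rats_q$, in the sense introduced in Section~\ref{sec:skewlin}. This is the correct setting, because Lemma~\ref{l:res_skewlin} was stated for an arbitrary subfield of $\compl$ and so applies in particular with $\crp{K}=\rats_q$ (after choosing, if one wishes, any embedding $\rats_q(\chi)\hookrightarrow \compl$; alternatively, one may work entirely abstractly with $\rats_q$-characters, since only the numerical identities involving Schur indices and character values are used).

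First I would apply Lemma~\ref{l:res_skewlin} with $\crp{K}=\rats_q$: every irreducible constituent $\theta$ of $\chi_H$ is skew-linear over $\rats_q$, that is, $\theta(1)=m_q(\theta)$. Next, because $q \nmid \card{H}$, Lemma~\ref{l:sifacts}\ref{it:localsitriv} forces $m_q(\theta)=1$ for every $\theta\in \Irr(H)$. Combining these two identities gives $\theta(1)=1$, so every constituent of $\chi_H$ is linear, which is the claim.

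No real obstacle arises; the only point to verify is that the two lemmas apply, which is immediate from the hypotheses ($\chi(1)=m_q(\chi)$ for the first, $q\nmid \card{H}$ for the second).
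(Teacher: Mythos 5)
Your proof is correct and follows exactly the paper's route: the paper also deduces the corollary directly from Lemma~\ref{l:res_skewlin} applied over $\rats_q$ together with Lemma~\ref{l:sifacts}~\ref{it:localsitriv}. The brief remark about applying Lemma~\ref{l:res_skewlin} to $\rats_q$ (which is not literally a subfield of $\compl$) is a reasonable point of care, handled the same way implicitly in the paper.
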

\begin{proof}
  This is immediate from Lemma~\ref{l:res_skewlin}
  and Lemma~\ref{l:sifacts}~\ref{it:localsitriv}.
\end{proof}

\section{The nonideal kernel}
\label{sec:defnker}

For every field $\crp{K}$ 
and any finite group $G$, we define 
\[ \NKer_{\crp{K}}(G)
   := \bigcap \{ \Ker(\chi) 
                 \mid 
                 \chi \in \Irr(G)
                 ,\;
                 \chi(1) > m_{\crp{K}}(\chi)
              \}.
\]
If $m_{\crp{K}}(\chi) = \chi(1)$ for every $\chi\in \Irr(G)$,
we set $\NKer_{\crp{K}}(G):=G$.
We call $\NKer_{\crp{K}}(G)$ the 
\defemph{nonideal kernel} of $G$ over $\crp{K}$.
Notice that $\NKer_{\crp{K}}(G)$, for any field $\crp{K}$, 
is characteristic in $G$.

\begin{lemma}\label{l:subfields}
  Let $\crp{K} \subseteq \crp{L}$ be fields.
  Then $\NKer_{\crp{L}}(G) \subseteq \NKer_{\crp{K}}(G)$.
\end{lemma}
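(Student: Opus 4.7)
The plan is to reduce the statement to a well-known monotonicity property of the Schur index: if $\crp{K}\subseteq \crp{L}$, then $m_{\crp{L}}(\chi)$ divides (and in particular is at most) $m_{\crp{K}}(\chi)$ for every $\chi\in \Irr G$. This follows directly from the definition of Schur index recalled in Section~\ref{sec:skewlin}: if $m_{\crp{K}}(\chi)=m$, then by definition $m\chi$ is afforded by a representation with entries in $\crp{K}(\chi)$, and since $\crp{K}(\chi)\subseteq \crp{L}(\chi)$, the same representation shows that $m\chi$ is afforded over $\crp{L}(\chi)$, whence $m_{\crp{L}}(\chi)\mid m$ by minimality.

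With this in hand, the inclusion is immediate from the defining intersection. Given $g\in \NKer_{\crp{L}}(G)$, I would take any $\chi\in \Irr G$ contributing to the intersection for $\NKer_{\crp{K}}(G)$, that is, any $\chi$ with $\chi(1)>m_{\crp{K}}(\chi)$. By monotonicity, $m_{\crp{L}}(\chi)\leq m_{\crp{K}}(\chi)<\chi(1)$, so $\chi$ also contributes to the intersection for $\NKer_{\crp{L}}(G)$, whence $g\in \Ker\chi$. As $\chi$ was arbitrary in the relevant set, this yields $g\in \NKer_{\crp{K}}(G)$.

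The only point requiring a brief comment is the convention $\NKer_{\crp{K}}(G):=G$ in the case where no $\chi\in \Irr G$ satisfies $\chi(1)>m_{\crp{K}}(\chi)$, i.e., when the defining intersection is empty. If this degenerate situation occurs over $\crp{L}$, then by monotonicity it also occurs over $\crp{K}$, and both sides equal $G$; if it occurs only over $\crp{K}$, the right-hand side is already all of $G$ and the inclusion is trivial. Either way, no separate argument is needed.

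I anticipate no real obstacle here: the proof essentially amounts to unwinding the definition of $\NKer_{\crp{K}}(G)$ together with the essentially tautological fact that enlarging the ground field can only shrink the Schur index. The only minor care is bookkeeping the empty-intersection convention, which is cosmetic rather than substantive.
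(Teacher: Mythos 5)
Your proof is correct and is essentially the paper's argument: both rest on the monotonicity $m_{\crp{L}}(\chi)\leq m_{\crp{K}}(\chi)$ (so every character skew-linear over $\crp{L}$ is skew-linear over $\crp{K}$), and then the inclusion of defining intersections follows. Your extra remarks justifying the monotonicity and handling the convention $\NKer_{\crp{K}}(G)=G$ are fine but not different in substance.
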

\begin{proof}
  Since $m_{\crp{L}}(\chi)$ divides 
  $m_{\crp{K}}(\chi)$ for any $\chi\in \Irr G$,
  any character which is skew-linear over $\crp{L}$,
  is also skew-linear over $\crp{K}$.
  The result follows.
\end{proof}

\begin{lemma}\label{l:kernelnonlin}
  Let $G$ be a nonabelian group.
  Then
  \[ \bigcap_{\substack{\chi\in \Irr G \\ \chi(1)>1}}
      \Ker \chi = \{1\}.
  \]
\end{lemma}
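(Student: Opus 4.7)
The plan is to reduce the claim to the standard fact that irreducible characters of $G$ separate elements, i.e.\ $\bigcap_{\chi\in\Irr G}\Ker\chi=\{1\}$. So if $N$ denotes the intersection in the statement, and $g\in N$, I want to show $\lambda(g)=1$ for every linear character $\lambda\in \Irr G$ as well, for then $g$ lies in the kernel of every irreducible character and hence $g=1$.

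To produce such linear information from the nonlinear characters, I would use the tensoring trick: if $\chi\in\Irr G$ has $\chi(1)>1$ and $\lambda\in\Irr G$ is linear, then $\lambda\chi$ is again irreducible of the same degree $\chi(1)>1$, hence also nonlinear. By hypothesis $g\in\Ker(\lambda\chi)$, so $(\lambda\chi)(g)=\chi(1)$; since also $\chi(g)=\chi(1)$, this forces $\lambda(g)\chi(1)=\chi(1)$ and therefore $\lambda(g)=1$. For this argument to apply at all, one needs the existence of \emph{some} nonlinear irreducible character, which is exactly where the hypothesis that $G$ is nonabelian enters. Iterating over all linear $\lambda$ then gives $g\in\bigcap_{\lambda\in\Lin G}\Ker\lambda$, and combined with $g$ being in every nonlinear kernel one concludes $g\in\bigcap_{\chi\in\Irr G}\Ker\chi=\{1\}$.

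There is no real obstacle here beyond spotting the tensor trick; everything else is bookkeeping. The only subtlety to mention is that one is using that the product $\lambda\chi$ of an irreducible with a linear character is again irreducible, which is immediate from $\langle \lambda\chi,\lambda\chi\rangle=\langle\chi,\chi\rangle=1$, and the triviality of the intersection $\bigcap_{\chi\in\Irr G}\Ker\chi$, which follows for example from faithfulness of the regular representation.
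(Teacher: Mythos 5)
Your proof is correct, but it goes by a different route than the paper. You reduce the statement to the classical fact that the irreducible characters separate the elements of $G$ (equivalently, the regular representation is faithful), and you transfer the hypothesis from the nonlinear characters to the linear ones via the tensor trick: for $g$ in the intersection, $\lambda\chi$ is again irreducible and nonlinear for any $\lambda\in\Lin G$ and any nonlinear $\chi\in\Irr G$, so $\lambda(g)\chi(1)=(\lambda\chi)(g)=\chi(1)$ forces $\lambda(g)=1$; the nonabelian hypothesis enters only to guarantee that such a $\chi$ exists. The paper instead argues directly with the second orthogonality relation evaluated at the pair $(1,g)$: if $g\neq 1$ lies in every nonlinear kernel, then
\[ 0=\sum_{\chi\in\Irr G}\chi(1)\chi(g)
   =\sum_{\substack{\chi\in\Irr G\\ \chi(1)>1}}\chi(1)^2
    +\sum_{\chi\in\Lin G}\chi(g), \]
and since the second sum is $\card{G:G'}$ or $0$ while every term of the first is positive, the set of nonlinear characters must be empty, contradicting that $G$ is nonabelian. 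The two arguments are of comparable length and both elementary; yours modularizes the proof into two standard facts (irreducibility of $\lambda\chi$ and faithfulness of the regular representation), while the paper's is self-contained from column orthogonality and in passing exhibits the sharper dichotomy that a nontrivial element in the intersection of the nonlinear kernels can only exist when there are no nonlinear characters at all. Either version is perfectly acceptable here.
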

\begin{proof}
  Suppose that $g\neq 1$ is 
  contained in the kernel of all nonlinear characters.
  Then, by the second orthogonality relation~\cite[(2.18)]{isaCTdov},
  \begin{align*}
    0 &= \sum_{\chi\in \Irr G} \chi(1)\chi(g)
       = \sum_{\substack{\chi \in \Irr G \\ \chi(1)>1}} 
         \chi(1)^2
         + \sum_{\chi\in \Lin G} \chi(1)\chi(g).
  \end{align*}
  The second sum runs over the irreducible characters of $G/G'$
  and has value $\card{G:G'}$ or $0$, 
  according to whether $g\in G'$ or not.
  It follows that
  the first sum must be empty.
  Thus $G$ has no nonlinear characters,
  which means that $G$ is abelian, as claimed.
\end{proof}

\begin{cor}
  Let $G$ be a nonabelian group. 
  Then $\NKer_{\compl}(G) = 1$.
\end{cor}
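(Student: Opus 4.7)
The plan is to reduce the statement directly to Lemma~\ref{l:kernelnonlin}. Over the algebraically closed field $\compl$, every irreducible character is afforded by a representation with entries in $\compl(\chi) \subseteq \compl$ itself, so the Schur index $m_{\compl}(\chi)$ equals $1$ for all $\chi \in \Irr G$. Consequently, the defining condition $\chi(1) > m_{\compl}(\chi)$ in the definition of $\NKer_{\compl}(G)$ simplifies to $\chi(1) > 1$; that is, the skew-linear characters over $\compl$ are precisely the linear characters.

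Therefore the defining intersection becomes
\[
  \NKer_{\compl}(G) \;=\; \bigcap_{\substack{\chi\in \Irr G \\ \chi(1) > 1}} \Ker(\chi).
\]
Since $G$ is assumed nonabelian, Lemma~\ref{l:kernelnonlin} applies and shows that this intersection is trivial, which yields $\NKer_{\compl}(G) = 1$.

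There is no serious obstacle: the only thing to check carefully is that the ``empty-intersection'' branch of the definition (where $\NKer_{\crp{K}}(G)$ is set to $G$) does not apply here. This branch is triggered only when every $\chi \in \Irr G$ satisfies $\chi(1) = m_{\crp{K}}(\chi)$; over $\compl$ this would force every irreducible character to be linear, i.e.\ $G$ abelian, contrary to hypothesis. Hence the intersection is taken over a nonempty family, and Lemma~\ref{l:kernelnonlin} delivers the conclusion.
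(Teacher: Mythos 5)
Your proof is correct and matches the paper's intended argument: the corollary is stated as an immediate consequence of Lemma~\ref{l:kernelnonlin}, using exactly the fact (noted in Section~\ref{sec:skewlin}) that $m_{\compl}(\chi)=1$ for all $\chi$, so skew-linear over $\compl$ means linear. Your extra check that the $\NKer_{\crp{K}}(G)=G$ branch cannot occur for nonabelian $G$ is a sensible precaution but adds nothing beyond the paper's reasoning.
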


Let us say that a character $\alpha$
(not necessarily irreducible)
is \defemph{strictly nonideal},
if $\ipcf{\alpha}{\chi} < \chi(1)$ for all
$\chi\in \Irr G$.
(Such a character is afforded by a left ideal of the group algebra,
 which does not contain any nonzero two-sided ideal.)
If at the same time, $\alpha$ is the character of a 
representation with entries in $\crp{K}$, then 
$m_{\crp{K}}(\chi)$ divides $\ipcf{\alpha}{\chi}$
for all $\chi\in \Irr G$~\cite[Corollary~10.2(c)]{isaCTdov}.
Thus no constituent
of $\alpha$ can be skew-linear over $\crp{K}$.
Conversely, if $S$ is a set of non-skew-linear characters over $\crp{K}$,
then we may add the characters of the corresponding 
irreducible representations
over $\crp{K}$
and get a strictly nonideal character $\alpha$
which is afforded by a $\crp{K}$\nbd representation.
Since $\Ker \alpha  = \bigcap \Ker\chi$, where $\chi$ runs through
the constituents of $\alpha$,
it follows that every group $G$ has a strictly nonideal character
$\alpha$ with $\Ker \alpha = \NKer_{\crp{K}}(G)$,
and such that $\alpha$ is afforded by a representation
over $\crp{K}$.
(In the case where $G=\NKer_{\crp{K}}(G)$,
the only such character is $\alpha=0$, however.)

\begin{lemma}\label{l:subgroupskernel}
  Let $H\leq G$ with
  $N:=\NKer_{\crp{K}}( H ) < H$.
  Then
  \[ \NKer_{\crp{K}}(G) \leq \bigcap_{g\in G} N^g 
                        \leq \NKer_{\crp{K}}(H).
  \]
\end{lemma}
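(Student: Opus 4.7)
The second inclusion $\bigcap_{g \in G} N^g \leq \NKer_{\crp K}(H)$ is immediate: the term for $g = 1$ in the intersection on the left is $N = \NKer_{\crp K}(H)$ itself. The substance of the lemma lies in the first inclusion, and this is where I would focus.

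My plan is to produce a single character $\alpha$ of $H$, afforded by a $\crp K$\nbd representation, whose kernel is exactly $N$ and none of whose irreducible constituents is skew-linear over $\crp K$, and then induce $\alpha$ up to $G$. The natural choice is to take $\alpha$ as the sum of all non-skew-linear irreducible (complex) characters of $H$---or equivalently, the sum of the characters of those irreducible $\crp K$\nbd representations whose block ideals are not division rings. By the definition of $\NKer_{\crp K}(H)$, this $\alpha$ has $\Ker \alpha = N$.

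I would then compute $\Ker(\alpha^G) = \bigcap_{g \in G} N^g$ using the standard description of an induced representation as a twisted permutation action on cosets: a $y \in G$ lies in the kernel of the induced representation if and only if $x^{-1}yx \in \Ker \alpha = N$ for every $x \in G$. So it suffices to show that every irreducible constituent $\beta$ of $\alpha^G$ satisfies $\beta(1) > m_{\crp K}(\beta)$; for then, directly from the definition of $\NKer_{\crp K}(G)$, we have $\NKer_{\crp K}(G) \leq \Ker \beta$ for each such $\beta$, and hence $\NKer_{\crp K}(G) \leq \Ker(\alpha^G) = \bigcap_g N^g$.

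To rule out a skew-linear constituent $\beta$ of $\alpha^G$, I would argue as follows: by Frobenius reciprocity, $\beta_H$ shares at least one irreducible constituent $\theta$ with $\alpha$, and by construction $\theta$ is not skew-linear. If $\beta$ were skew-linear over $\crp K$, then Lemma~\ref{l:res_skewlin} would force every constituent of $\beta_H$---and in particular $\theta$---to be skew-linear, a contradiction. So the only real input is Lemma~\ref{l:res_skewlin}; once that is in hand, the remaining steps are routine verifications, so there is no serious obstacle.
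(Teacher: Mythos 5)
Your proof is correct, but it takes a different route from the paper's. The paper first manufactures, from the non-skew-linear irreducible $\crp{K}$\nbd representations of $H$, a \emph{strictly nonideal} character $\alpha$ of $H$ with $\Ker\alpha=N$ that is afforded by a representation over $\crp{K}$, and then shows that $\alpha^G$ is again strictly nonideal via the regular-character identity $\rho_G-\alpha^G=(\rho_H-\alpha)^G$ together with $\ipcf{(\rho_H-\alpha)^G}{\chi}=\ipcf{\rho_H-\alpha}{\chi_H}_H>0$; the conclusion that no constituent of $\alpha^G$ is skew-linear then comes from the fact that $m_{\crp{K}}(\chi)$ divides the multiplicities in a $\crp{K}$\nbd afforded character. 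You instead take $\alpha$ to be simply the sum of the non-skew-linear members of $\Irr(H)$ (nonempty because $N<H$), note $\Ker(\alpha^G)=\bigcap_g N^g$, and rule out a skew-linear constituent $\beta$ of $\alpha^G$ by Frobenius reciprocity combined with Lemma~\ref{l:res_skewlin}: a skew-linear $\beta$ would have only skew-linear constituents upon restriction to $H$, yet $\ipcf{\beta_H}{\alpha}_H=\ipcf{\beta}{\alpha^G}_G>0$ forces a non-skew-linear constituent. Your version needs neither the strictly-nonideal notion nor the $\crp{K}$\nbd realizability of $\alpha$ (the deep Schur-index input is hidden inside Lemma~\ref{l:res_skewlin} instead of the divisibility of multiplicities), so it is somewhat leaner for this one lemma; the paper's route has the side benefit of producing the stronger statement that $\alpha^G$ is itself strictly nonideal and $\crp{K}$\nbd afforded, which matches the framework the paper sets up around strictly nonideal characters. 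Your handling of the second inclusion (the $g=1$ term) and of the kernel of an induced character is fine.
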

\begin{proof}
  Let $\alpha$ be a strictly nonideal character of $H$
  with $N = \Ker \alpha$ and which is afforded by a representation
  over $\crp{K}$.
  Then $0\neq \alpha^G$ is afforded by a representation over $\crp{K}$
  and has kernel
  $\bigcap_{g\in G} N^g$~\cite[Lemma~5.11]{isaCTdov}.
  
  Let $\rho_G$ be the regular character of $G$.
  Notice that a character $\beta$ is strictly nonideal if and only if
  $\rho_G-\beta$ is a character and
  $\ipcf{\rho_G-\beta}{\chi}> 0$ for \emph{all}
  $\chi\in \Irr G$.
  Since $\rho_G = (\rho_H)^G$, we have
  that $\rho_G -\alpha^G = (\rho_H-\alpha)^G$ is a character, and
  \[ \ipcf{\rho_G-\alpha^G}{\chi}
      = \ipcf{(\rho_H-\alpha)^G}{\chi}
      = \ipcf{\rho_H-\alpha}{\chi_H}_H
      > 0
  \]
  for all $\chi\in \Irr G$.
  Thus $\alpha^G$ is strictly nonideal.
\end{proof}

\begin{lemma}\label{l:nt_id}
  Let $N$ be a normal subgroup of $G$, and set
  \[ e_N = \frac{1}{\card{N}} \sum_{n\in N} n.
  \]
  Then $\crp{K}G e_N \iso \crp{K}[G/N]$.
  If $\chi\in \Irr G$, then
  $\chi(e_N)\neq 0 $ if and only if
  $N\leq \Ker(\chi)$.
\end{lemma}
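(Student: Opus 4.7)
The plan is to establish the two claims by a direct computation with the averaging element $e_N$, using only that $e_N$ is a central idempotent, a coset-based linear independence argument, and, for the second claim, the second orthogonality relation together with Clifford's theorem.

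First, I would verify that $e_N$ is a central idempotent of $\crp{K}G$. The identity $ne_N = e_N$ for $n\in N$ gives $e_N^2 = e_N$, and $ge_Ng^{-1} = e_N$ follows from the normality of~$N$. Hence $\crp{K}Ge_N$ is a two-sided ideal, and left multiplication by $e_N$ is a ring homomorphism $\crp{K}G \to \crp{K}Ge_N$.

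Next, I would define $\varphi\colon \crp{K}[G/N] \to \crp{K}Ge_N$ by $gN \mapsto ge_N$. Well-definedness is immediate from $ne_N = e_N$, and $\varphi$ is clearly a $\crp{K}$\nbd algebra homomorphism whose image contains the $\crp{K}$-spanning set $\{ge_N : g\in G\}$, so it is surjective. For injectivity, observe that
\[
  ge_N = \frac{1}{\card{N}} \sum_{h\in gN} h,
\]
so images of distinct cosets are supported on disjoint subsets of~$G$, hence linearly independent in~$\crp{K}G$. Thus $\varphi$ is an isomorphism of $\crp{K}$\nbd algebras.

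For the second claim, I would compute
\[
  \chi(e_N) = \frac{1}{\card{N}} \sum_{n\in N} \chi(n) = \ipcf{\chi_N}{1_N}_N,
\]
which is the multiplicity of the trivial character of $N$ in $\chi_N$. If $N\leq \Ker\chi$, then $\chi_N = \chi(1)\cdot 1_N$ and this value is $\chi(1)\neq 0$. Conversely, if $\chi(e_N)\neq 0$, then $1_N$ is a constituent of $\chi_N$; by Clifford's theorem every constituent of $\chi_N$ is $G$-conjugate to $1_N$, hence equal to $1_N$, so $\chi_N = \chi(1)\cdot 1_N$ and $N\leq \Ker\chi$. I do not anticipate any real obstacle: the whole statement is standard, the only mild subtlety being the Clifford step in the converse direction of the second claim.
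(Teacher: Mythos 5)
Your proof is correct and follows essentially the same elementary route as the paper: the first part is the standard splitting of the canonical map $\crp{K}G \to \crp{K}[G/N]$ via $gN \mapsto ge_N$ (you just verify the isomorphism explicitly rather than quoting the splitting), and the second part is the standard computation $\chi(e_N) = \ipcf{\chi_N}{1_N}_N$. The only cosmetic difference is that you invoke Clifford's theorem for the converse, where the paper instead observes (via centrality of $e_N$ and Schur's lemma) that a representation affording $\chi$ sends $e_N$ to the identity or to $0$; both are routine and equally valid.
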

\begin{proof}
  This is well known:
  The canonical epimorphism
  $\crp{K}G \to \crp{K}[G/N]$ is split
  by the map sending a coset $Ng$ to 
  $(1/\card{N})\sum Ng = e_N g$.
  This proves the first statement.
  
  If $N\leq \Ker(\chi)$, then any representation
  affording $\chi$ sends 
  $e_N$ to the identity map.
  If $N\not\leq \Ker(\chi)$,
  then any representation affording $\chi$
  must send $e_N$ to $0$.   
\end{proof}

\begin{lemma}\label{l:idemp}
  Let $N:= \NKer_{\crp{K}}(G)$, and 
  $e_N$ as in Lemma~\ref{l:nt_id}. 
  Then $\crp{K}G(1-e_N)$ is a direct product of division rings.
  In particular, every idempotent $f\in \crp{K}G$ with
  $fe_N = 0$ is central.
\end{lemma}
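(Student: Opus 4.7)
The plan is to decompose $\crp{K}G$ into its Wedderburn blocks and identify which blocks land in $\crp{K}G(1-e_N)$. Write $\crp{K}G = A_1\times \dotsm \times A_r$ and, using Lemma~\ref{l:reps_basics}, fix for each $i$ a representative $\chi_i\in \Irr G$ with $\chi_i(A_i)\neq 0$. The image of the central idempotent $e_N$ under the projection $\crp{K}G\to A_i$ is a central idempotent of a simple ring, hence equals $0$ or $1$; by Lemma~\ref{l:nt_id} this image is $1$ if $N\leq \Ker\chi_i$ and $0$ otherwise. (This is well defined on blocks, since Galois conjugate characters share the same kernel.) Consequently
\[
\crp{K}G(1-e_N) \;=\; \prod_{\substack{i \\ N\,\not\leq\, \Ker\chi_i}} A_i .
\]

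Next I would apply the definition $N = \NKer_{\crp{K}}(G) = \bigcap\{\Ker\chi : \chi(1) > m_{\crp{K}}(\chi)\}$. For each index $i$ appearing in the product above, $N\not\leq \Ker\chi_i$ forces $\chi_i(1) = m_{\crp{K}}(\chi_i)$, i.e.\ $\chi_i$ is skew-linear over $\crp{K}$. Lemma~\ref{l:reps_basics}\ref{it:si_deg} then gives $A_i\iso \mat_1(D_i) = D_i$, a division ring. Thus $\crp{K}G(1-e_N)$ is a direct product of division rings, proving the main claim.

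For the \emph{in particular} statement, let $f\in \crp{K}G$ be idempotent with $fe_N=0$. Then $f = f(1-e_N)$, so $f$ lies in the subalgebra $\crp{K}G(1-e_N)$ and is an idempotent there. In a finite direct product of division rings, the only idempotents are the sums of the identity elements of some subset of the factors, and these are all central. Hence $f$ is central in $\crp{K}G(1-e_N)$, and, because $e_N$ itself is central in $\crp{K}G$, it is central in $\crp{K}G$ as well.

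I do not anticipate a real obstacle: the argument is essentially a bookkeeping match between the block decomposition and the defining intersection for $\NKer_{\crp{K}}(G)$. The only points needing minor care are the well-definedness of the condition ``$N\leq \Ker\chi$'' on Galois orbits, and the fact that a central idempotent of $\crp{K}G$ maps to $0$ or $1$ in each simple block; both follow immediately from what has already been recorded.
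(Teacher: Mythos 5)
Your proposal is correct and follows essentially the same route as the paper: via Lemma~\ref{l:nt_id}, $\crp{K}G(1-e_N)$ is the product of the blocks belonging to characters $\chi$ with $N\not\leq\Ker(\chi)$, which are skew-linear by the definition of $\NKer_{\crp{K}}(G)$, so those blocks are division rings, and idempotents in a direct product of division rings are central. Your version merely spells out the block-by-block evaluation of $e_N$ and the Galois-orbit well-definedness, which the paper leaves implicit.
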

\begin{proof}
  By Lemma~\ref{l:nt_id}, it follows that
  $ \crp{K} G (1-e_N) $ is the direct product of the block ideals
  which correspond to $\chi\in \Irr G$ 
  with $N\not\leq \Ker(\chi)$.
  By definition of $N$,
  any such $\chi$ is skew-linear over $\crp{K}$,
  and thus the corresponding block ideal
  is a division ring.
  
  In a direct product of division rings, every idempotent 
  is central.
\end{proof}

\begin{proof}[Proof of Lemma~\ref{main:introlemma}]
  The first part of Lemma~\ref{main:introlemma} 
  is contained in Lemma~\ref{l:idemp}.
  Conversely, if $\crp{K}G(1-e_N)$ is a direct product
  of division rings,
  then the above considerations yield 
  that when $m_{\crp{K}}(\chi) < \chi(1)$, we must have
  $N \subseteq \Ker(\chi)$, and thus 
  $N \leq \NKer_{\crp{K}}(G)$.
\end{proof}

Following Blackburn~\cite{Blackburn66},
for any group $G$, we set
\[ \R(G)
    := \bigcap \{U\leq G \mid 
                 U \text{ not normal in } G
               \}.
\]
If every subgroup of $G$ is normal, then we set $\R(G) = G$.
Blackburn~\cite{Blackburn66} classified finite groups in which 
$\R(G) \neq 1$.
Therefore, a group $G$ with $\R(G) \neq 1$ is called a
\defemph{Blackburn group}.
The following result shows why this is relevant for us:

\begin{lemma}\label{l:many_normals}
  For any  finite group $G$ and field $\crp{K}$ 
  of characteristic zero,
  we have $\NKer_{\crp{K}}(G) \leq \R(G)$.
\end{lemma}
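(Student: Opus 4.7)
The plan is to prove the contrapositive: if $U \leq G$ is a subgroup which does not contain $N := \NKer_{\crp{K}}(G)$, then $U$ is normal in $G$. Since $\R(G)$ is by definition the intersection of all nonnormal subgroups, this at once yields $N \leq \R(G)$. The degenerate case $N = G$ is covered automatically, because then every proper subgroup of $G$ will be forced to be normal, so that $G$ is a Dedekind group and $\R(G) = G$.

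For the contrapositive, I would fix $U \leq G$ with $N \not\leq U$ and look at the element $f := e_U(1 - e_N)$ in $\crp{K}G$. Since $N \nteq G$, the idempotent $e_N$ is central, so it commutes with $e_U$, making $f$ again an idempotent, and clearly $f e_N = 0$; Lemma~\ref{l:idemp} then forces $f$ to be central in $\crp{K}G$. A direct double-counting computation, using that $N \nteq G$ ensures $UN$ is a subgroup of order $\card{U}\card{N}/\card{U \cap N}$, gives $e_U e_N = e_{UN}$, so $f = e_U - e_{UN}$. Because $UN \supsetneq U$ under our hypothesis, the coefficient of each $u \in U$ in $f$ is $\frac{1}{\card{U}} - \frac{1}{\card{UN}} > 0$, the coefficient of each $h \in UN \setminus U$ is $-\frac{1}{\card{UN}} < 0$, and every other coefficient vanishes.

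To finish, I would appeal once more to the centrality of $f$: conjugation by any $g \in G$ permutes the underlying group elements while preserving their coefficients in $f$, so the set of group elements on which $f$ attains its unique strictly positive coefficient---namely $U$ itself---is closed under $G$-conjugation. Hence $U \nteq G$, as required. The main and rather mild point of the argument is to recognise that the centrality of $e_N$ together with Lemma~\ref{l:idemp} repackages the division-ring structure of $\crp{K}G(1 - e_N)$ as a clean central-idempotent statement; once that is in place, the conclusion drops out of an elementary comparison of coefficients in the group algebra.
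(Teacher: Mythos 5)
Your argument is correct and is essentially the paper's own proof: the same idempotent $f = e_U(1-e_N)$, the same appeal to Lemma~\ref{l:idemp} for centrality, the identity $e_Ue_N = e_{UN}$, and the comparison of coefficients under conjugation to conclude that $U$ is normal. You merely spell out the coefficient argument a bit more explicitly than the paper does.
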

\begin{proof}
  Suppose $U\leq G$ is such that
  $N:= \NKer_{\crp{K}}(G)\not\subseteq U \leq G$.
  We need to show that $U \nteq G$.
  
  Set $f= (1-e_N)e_U$, with $e_N$ as above,
  and $e_U:= (1/\card{U})\sum_{u\in U} u$
  analogously.
  Then $f^2 = f \in \crp{K}G(1-e_N)$, 
  since $e_N$ is central in $\crp{K} G$.
  Thus $f$ 
  is central in $\crp{K} G$ by Lemma~\ref{l:idemp}.
  We compute
  \[ f = \frac{1}{\card{U}} \sum_{u\in U} u 
         - \frac{1}{\card{NU}} \sum_{x\in NU} x.
  \]
  As $N$ is not contained in $U$, we have
  $U < NU$.
  As $g^{-1}fg = f$ for all $g\in G$,
  it follows that $U\nteq G$.  
\end{proof}

\section{Dedekind groups}
\label{sec:dedekind}

In this section, we compute 
$\NKer_{\crp{K}}(G)$ for Dedekind groups, 
and determine when $\crp{K}G$ is a direct product 
of division rings.
These results are mostly known.

Recall that a \defemph{Dedekind group}
is a finite group in which all subgroups are 
normal.
First, we recall Dedekind's classification of these 
groups~\cite[Satz~III.7.12 on p.~308]{huppEG1}.

\begin{thm}[Dedekind 1897]\label{t:dedekind}
  Let $G$ be a finite group, such that every 
  subgroup of $G$ is normal.
  Then either $G$ is abelian, or
  \[ G \iso Q_8 \times (C_2)^r \times A \quad(r\geq 0),
  \]
  where $A$ is abelian of odd order.
\end{thm}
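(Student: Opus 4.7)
The plan is to reduce the classification to the prime-by-prime case and then analyse Dedekind $p$-groups separately. Since every subgroup of $G$ is normal, every subgroup is in particular subnormal, so by a standard characterisation $G$ is nilpotent; hence $G=\prod_p P_p$ with $P_p$ the Sylow $p$-subgroup. Each $P_p$ inherits the Dedekind property since its subgroups are normal in $G$, hence in $P_p$. The theorem then reduces to two claims: for odd $p$, every Dedekind $p$\nbd group is abelian; and for $p=2$, a nonabelian Dedekind $2$\nbd group is isomorphic to $Q_8\times(C_2)^r$.

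For odd $p$, I would argue by contradiction. Take a nonabelian Dedekind $p$\nbd group $P$ and pick $a,b\in P$ with $[a,b]\neq 1$. Normality of $\erz{a}$ gives $a^b=a^r$ for some integer $r$, and since $b$ has $p$\nbd power order while the $p$\nbd part of $\Aut(\erz{a})$ lies in $1+p\ints$ (as $p$ is odd), necessarily $r\equiv 1\pmod p$. The commutator $c=[a,b]=a^{r-1}$ lies in $\erz{a}\cap\erz{b}$, the second containment coming from the symmetric relation $b^a=b^s$, which gives $c=b^{1-s}$. A short induction evaluating $[a,b^n]$ and $[a^n,b]$ and repeatedly applying $r\equiv 1\pmod p$ then forces $c=1$, contradicting the choice of $a,b$. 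Alternatively one may quote the classical observation that every nonabelian Dedekind group contains $Q_8$, which cannot embed into an odd-order group.

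For $p=2$ take a nonabelian Dedekind $2$\nbd group $P$ and non-commuting $a,b\in P$. Normality of $\erz{a}$ and $\erz{b}$ inside a $2$\nbd group leaves only $a^b=a^{-1}$ and $b^a=b^{-1}$ as possibilities, and a direct computation in $H:=\erz{a,b}$ then yields $a^2=b^2=(ab)^2$ and $H\iso Q_8$. Setting $C:=\C_P(H)$, I would show $H\cap C=\Z(H)$ and $P=HC$, and that $C$ has exponent $2$; the passage from this central-product presentation to a direct product $Q_8\times(C_2)^r$ is a standard splitting argument, using that $\Z(H)$ is a direct summand of the elementary abelian group $C$. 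The main obstacle is the exponent\nbd $2$ claim for $C$: one has to exclude the existence of $z\in C$ of order $4$, typically by observing that otherwise a suitable cyclic subgroup such as $\erz{az}$ fails to be normalised by $b$, contradicting the hypothesis that $P$ is Dedekind.
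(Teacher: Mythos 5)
The paper itself gives no proof of this theorem: it simply cites Dedekind's classification from Huppert (Satz III.7.12), so your outline has to stand on its own. Your reduction (all subgroups normal $\Rightarrow$ nilpotent $\Rightarrow$ direct product of Dedekind Sylow subgroups) is fine, and so are the final steps for $p=2$ (identifying $\erz{a,b}\iso Q_8$ \emph{once} inversion is known, the exponent\nbd $2$ argument via $\erz{az}$ not being normalized by $b$, and the splitting $P=H\times E$). But the two central steps have genuine gaps. For $p=2$, the claim that ``normality of $\erz{a}$ and $\erz{b}$ inside a $2$\nbd group leaves only $a^b=a^{-1}$ and $b^a=b^{-1}$'' is false as stated: in the modular group $M=\erz{x,y}$ of order $16$ with $x^8=y^2=1$ and $x^y=x^5$, put $a=x$ and $b=yx^2$; then $a$ and $b$ do not commute, both $\erz{a}$ and $\erz{b}$ are normal in $M=\erz{a,b}$, yet $a^b=a^5\neq a^{-1}$. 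Ruling out $a^b=a^r$ with $r\equiv 1\pmod 4$, $r\not\equiv 1$, requires invoking normality of \emph{further} cyclic subgroups (in the example above it is $\erz{ba^{-2}}=\erz{y}$ that fails to be normal); this computation is precisely the heart of Dedekind's theorem and is missing from your sketch.

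For odd $p$, the proposed ``short induction evaluating $[a,b^n]$ and $[a^n,b]$'' does not force $c=1$: in the class\nbd $2$ group $H=\erz{a,b}$ these identities only give $c^n=a^{r^n-1}$, which is automatically consistent with $c=a^{r-1}\neq 1$; even the stronger relation coming from $[c,b]=1$, namely $p^m\mid (r-1)^2$ where $p^m=\ord(a)$, admits solutions with $c\neq1$, so no contradiction falls out of these computations alone. One genuinely needs an extra idea, e.g.\ a minimal\nbd counterexample order reduction: among noncommuting pairs choose $(a,b)$ with $\ord(a)\ord(b)$ minimal, note that the order\nbd $p$ subgroups of $\erz{a}$ and $\erz{b}$ coincide (both contain the socle of $\erz{c}$), and replace $b$ by $b'=ba^{-p^{m-n}t}$ for suitable $t$; because $p$ is odd the correction term $c^{\binom{p^{n-1}}{2}}$ vanishes, so $\ord(b')<\ord(b)$ while $[a,b']=c\neq1$, a contradiction. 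Finally, your fallback --- quoting that every nonabelian Dedekind group contains $Q_8$ --- is essentially the nontrivial content of the theorem being proved, so it cannot be cited in its own proof.
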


Let $\tau\in \Irr(Q_8)$ be the irreducible, faithful 
character of degree $2$.
Then $\quats := \rats Q_8 e_{\tau}$ is a division ring,
the rational quaternions.
$\quats$ can also be described as the 
$\rats$-vector space with basis
$\{1, i, j, k\}$ and multiplication 
defined by $i^2 = j^2 =-1$, $k= ij = -ji$.

\begin{thm}\label{t:kg_divringprod}
    Let $\crp{K}$ be a field and $G$ be a group. 
    Then $\crp{K}G$ is a direct product of division rings
    if and only if either 
    $G$ is abelian, or
    $G\iso Q_8 \times (C_2)^r \times A$,
    where $A$ is abelian of odd order,
    and\/ $\quats \otimes_{\rats}\crp{K}(\lambda)$
    is a division ring for all $\lambda\in \Lin(A)$.
\end{thm}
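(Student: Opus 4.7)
The plan is to prove both directions by analyzing when every irreducible character of $G$ is skew-linear over $\crp{K}$, which by Lemma~\ref{l:idemp} is equivalent to $\crp{K}G$ being a direct product of division rings.

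\textbf{Necessity.} In a direct product of division rings, each idempotent has components $0$ or $1$ in each factor and is therefore central. I would apply this to the idempotent $e_U = \card{U}^{-1}\sum_{u\in U} u$ attached to an arbitrary subgroup $U\leq G$: centrality forces $g^{-1}e_U g = e_{U^g} = e_U$ for every $g\in G$, whence $U^g = U$. So every subgroup of $G$ is normal, $G$ is a Dedekind group, and Theorem~\ref{t:dedekind} places us in one of the two cases.

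\textbf{Sufficiency, abelian case.} If $G$ is abelian then $\crp{K}G$ is a commutative semisimple algebra, hence a direct product of fields, with no further hypothesis.

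\textbf{Sufficiency, nonabelian Dedekind case.} For $G = Q_8 \times (C_2)^r \times A$, I would enumerate the blocks using Lemma~\ref{l:blocksdirprod}. Since $\Irr(U\times V) = \Irr U \times \Irr V$, the irreducible complex characters take the form $\chi = \sigma \times \mu \times \lambda$ with $\sigma\in \Irr Q_8$, $\mu\in \Lin((C_2)^r)$, $\lambda\in \Lin A$. When $\sigma$ is linear, so is $\chi$, and its block is simply the field $\crp{K}(\chi)$, automatically a division ring. The remaining characters have $\sigma = \tau$, the faithful degree-$2$ character; since $\tau$ and $\mu$ are rational-valued, $\crp{K}(\chi) = \crp{K}(\lambda)$. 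Iterating Lemma~\ref{l:blocksdirprod} across the three factors, and using $A_{\rats}(\tau) = \rats Q_8 e_\tau \iso \quats$ together with $A_{\crp{K}}(\lambda) \iso \crp{K}(\lambda)$, identifies the corresponding block of $\crp{K}G$ as
\[ A_{\crp{K}}(\chi) \iso \quats \otimes_{\rats} \crp{K}(\lambda). \]
This is a division ring iff $\quats \otimes_{\rats} \crp{K}(\lambda)$ is, and $\crp{K}G$ is a direct product of division rings iff this holds for every $\lambda \in \Lin(A)$, which is the stated condition.

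\textbf{Main obstacle.} The delicate step is this block computation: one must verify that when Lemma~\ref{l:blocksdirprod} is iterated across the three factors, the character fields telescope correctly to $\crp{K}(\lambda)$, and that $A_{\crp{K}}(\tau)$ arises from $\quats$ by extension of scalars from $\rats$ to $\crp{K}$. Once this identification is secured, the equivalence follows formally.
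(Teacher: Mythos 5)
Your proposal is correct and follows essentially the same route as the paper: centrality of the subgroup idempotents $e_U$ forces $G$ to be Dedekind (the paper invokes the argument of Lemma~\ref{l:many_normals} for this), Theorem~\ref{t:dedekind} gives the two cases, and Lemma~\ref{l:blocksdirprod} identifies the blocks of the nonlinear characters $\tau\times\mu\times\lambda$ as $\quats\otimes_{\rats}\crp{K}(\lambda)$, since $\crp{K}(\chi)=\crp{K}(\lambda)$ and $\crp{K}Q_8e_{\tau}\iso\quats\otimes_{\rats}\crp{K}$. The "delicate step" you flag is exactly the computation the paper carries out, and your sketch of it is sound.
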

\begin{proof}
    Suppose $\crp{K} G$ is a direct product of division rings.
    Then all subgroups of $G$ are normal in $G$
    by Lemma~\ref{l:many_normals} 
    (as $\NKer_{\crp{K}}(G) = G$, or directly from the argument in
     the proof of Lemma~\ref{l:many_normals}).
    It follows that either $G$ is abelian, or 
    $G \iso Q_8 \times (C_2)^r \times A$
    with $A$ abelian of odd order.
    
    In the second case,
    let $\tau\in \Irr(Q_8)$ be the irreducible, faithful 
    character of degree $2$.
    Then 
    \[ \crp{K} Q_8 e_{\tau} 
       \iso \quats \otimes_{\rats} \crp{K},
    \]
    the quaternions over $\crp{K}$.
    Any nonlinear, irreducible character of 
    $G = Q_8 \times (C_2)^r \times A$
    has the form $\chi = \tau \times \sigma \times \lambda$,
    where $\sigma \in \Lin(C_2)^r$ and $\lambda\in \Lin A$.
    The corresponding block ideal of the rational group algebra
    is, by Lemma~\ref{l:blocksdirprod}, isomorphic to
    \[ 
      \quats \otimes_{\rats} \crp{K}(\lambda).
    \]
    The result follows.  
\end{proof}

\begin{cor}\label{c:dede_nker}
  Let $G$ be a Dedekind group and $\crp{K}$ a field.
  Then\/ $\NKer_{\crp{K}}(G)=G$ 
  or\/ $\NKer_{\crp{K}}(G) = 1$.
\end{cor}
\begin{proof}
  Suppose that $\NKer_{\crp{K}}(G)\neq G$,
  which means that $\crp{K}G$ is not 
  a direct product of division rings.
  Then $G$ is not abelian and thus
  $G \iso Q_8 \times (C_2)^r \times A$ with 
    $A$ abelian of odd order.  
  Moreover, there is some $\lambda \in \Lin(A)$
  such that $\quats \otimes_{\rats}\crp{K}(\lambda)$ 
  is not a division ring.
  As before, let $\tau\in \Irr(Q_8)$ be the faithful irreducible
  character of $Q_8$.
  Then
  $\Ker(\tau\times 1 \times  \lambda) 
  = 1 \times (C_2)^r \times \Ker(\lambda)$.
  
  It follows that
  $\NKer_{\crp{K}}(G) \subseteq 1 \times \Ker(\mu)$ for every
  $\mu\in \Lin( (C_2)^r \times A)$ such that 
  $\ord(\lambda)$ divides the order of $\mu$.
  Since $A$ contains elements of order $\ord(\lambda)$, we
  see that $\NKer_{\crp{K}}(G) = 1$.  
\end{proof}

Notice that for a linear character
$\lambda$, we have 
$\crp{K}(\lambda) = \crp{K}(\eps_n)$,
where $\eps_n$ is a primitive $n$-th root of unity
and $n= \ord(\lambda)$.
The following lemma collects some results.
These will be needed also in the proof of
Theorem~\ref{main:qclass}.

\begin{lemma}
\label{l:split_quat}
  \hfill
  \begin{enumthm}
  \item \label{it:squaresum}
       $\quats \otimes_{\rats} \crp{K}$
     is a division ring if and only if
     $-1$ is not a sum of two squares in $\crp{K}$.
  \item \label{it:qu_local} 
       $\quats \otimes_{\rats} \rats_2$
       and\/ $\quats \otimes_{\rats} \reals$
       are division rings,
       and\/ 
       $\quats \otimes_{\rats} \rats_p$ for $p$ odd
       is not a division ring.
       (Here $\rats_p$ is the field of $p$\nbd adic numbers.)
  \end{enumthm}
  Let $\eps_n$ be a primitive $n$\nbd th root of unity,
  where $n$ is odd.
  Then
  \begin{enumthm}[resume]
  \item \label{it:qu_cyc}
        $\quats \otimes_{\rats}{\rats(\eps_n)} $
        is a division ring if and only if
        the multiplicative order of $2$ in $(\ints/n)^*$ 
        is odd, if and only if
        $\quats \otimes_{\rats} \rats_2(\eps_n)$
        is a division ring.
  \item \label{it:qu_qu}
        $\quats \otimes_{\rats} \rats( \sqrt{2}, \eps_n)$
        is a division ring only when $n=1$.                
  \end{enumthm}
\end{lemma}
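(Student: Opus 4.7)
The plan is to use the classical theory of quaternion algebras together with the Brauer--Hasse--Noether local-global principle (equivalent to the local Schur index statements in Lemma~\ref{l:sifacts}).

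For part \ref{it:squaresum}, I would recall the standard splitting criterion for $\left(\frac{-1,-1}{K}\right)$. If $-1 = a^{2} + b^{2}$ in $K$, then because $ij = -ji$ a direct computation gives $(ai+bj)^{2} = -a^{2}-b^{2} = 1$, so $(ai+bj-1)(ai+bj+1) = 0$ produces zero divisors. Conversely, if $\quats \otimes_{\rats} K$ is not a division ring, the reduced norm form $x^{2}+y^{2}+z^{2}+w^{2}$ has a nontrivial zero; applying the Brahmagupta--Fibonacci identity $(z^{2}+w^{2})(x^{2}+y^{2}) = (xz+yw)^{2} + (xw-yz)^{2}$ (and handling the degenerate case $z^{2}+w^{2}=0$ separately, which already gives $-1$ as a single square) yields a representation $-1 = a^{2}+b^{2}$.

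For part \ref{it:qu_local}, the case of $\reals$ is immediate by sign. For $\rats_{2}$, a standard $2$-adic valuation argument (reduce $-c^{2} = a^{2}+b^{2}$ modulo $8$ after clearing denominators) shows that $-1$ is not a sum of two squares in $\rats_{2}$. For $\rats_{p}$ with $p$ odd, Chevalley--Warning gives a nonzero solution of $x^{2}+y^{2}+1 \equiv 0 \pmod{p}$, which Hensel's lemma lifts to $\rats_{p}$, yielding $-1$ as a sum of two squares.

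For part \ref{it:qu_cyc}, I would argue via Brauer--Hasse--Noether. The case $n=1$ reduces to part \ref{it:qu_local}. For odd $n \geq 3$ the field $\rats(\eps_{n})$ is totally complex, so $\quats$ splits at all infinite places, and by \ref{it:qu_local} also at all finite places above odd primes. Hence $\quats \otimes_{\rats} \rats(\eps_{n})$ is a division ring iff it fails to split at some place above $2$. Now $\rats_{2}(\eps_{n})$ is the unramified extension of $\rats_{2}$ of degree $f := \ord_{(\ints/n)^{*}}(2)$ (cyclotomic extensions of order prime to the residue characteristic are unramified). The Hasse invariant of $\quats$ at $2$ is $1/2$ and multiplies by the local degree under base change, so the invariant over $\rats_{2}(\eps_{n})$ is $f/2 \bmod \ints$, which vanishes iff $f$ is even. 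This simultaneously establishes both equivalences claimed in \ref{it:qu_cyc}.

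For part \ref{it:qu_qu}, the case $n=1$ follows from \ref{it:squaresum} because $\rats(\sqrt{2}) \subset \reals$, so $-1$ is not a sum of squares there. For odd $n \geq 3$, I would show $\quats \otimes_{\rats} \rats(\sqrt{2},\eps_{n})$ is split by checking every place: the field is totally complex (it contains $\rats(\eps_{n})$), so $\quats$ splits at $\infty$; at finite odd places, $\quats$ is already split by \ref{it:qu_local}; and at any place above $2$, the completion contains $\rats_{2}(\sqrt{2})$, a ramified quadratic extension of $\rats_{2}$, so by multiplicativity of the Hasse invariant the algebra acquires invariant $2 \cdot (1/2) \equiv 0$ and splits. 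The hardest step conceptually is part \ref{it:qu_cyc}, where one must know that $\rats_{2}(\eps_{n})/\rats_{2}$ is unramified of degree equal to the order of $2$ modulo $n$ and must apply the multiplicativity of the Hasse invariant correctly; once this is in place, part \ref{it:qu_qu} is a short variant of the same local-global bookkeeping.
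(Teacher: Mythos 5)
Your proof is correct, and where the paper actually argues (part \ref{it:qu_qu}) you take essentially the same route: a local--global check in which the only interesting places are those above $2$, settled by the fact that the completion there contains the ramified quadratic extension $\rats_2(\sqrt{2})$, which splits $\quats\otimes_{\rats}\rats_2$. The differences are in how much you prove from scratch and in which tool carries part \ref{it:qu_cyc}. The paper simply cites standard references for \ref{it:squaresum} and \ref{it:qu_local} and cites Moser for \ref{it:qu_cyc}, adding a sketch that deliberately \emph{avoids} the local--global principle: if $2$ has even order $2r$ modulo a prime $p\mid n$, then $p\mid 2^r+1$ and an elementary computation exhibits $-1$ as a sum of two squares in $\rats(\eps_p)$, so $\quats\otimes_\rats\rats(\eps_n)$ splits by \ref{it:squaresum} alone. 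You instead give self-contained elementary proofs of \ref{it:squaresum} and \ref{it:qu_local} (norm form plus the two-squares identity; mod $8$; Chevalley--Warning and Hensel) and then run \ref{it:qu_cyc} uniformly through Brauer--Hasse--Noether and the multiplicativity of the Hasse invariant, using that $\rats_2(\eps_n)/\rats_2$ is unramified of degree $f=\ord_{(\ints/n)^*}(2)$ so the invariant becomes $f/2 \bmod \ints$; this neatly gives both equivalences of \ref{it:qu_cyc} at once, at the cost of invoking the global reciprocity machinery that the paper's sketch shows is avoidable. One small imprecision in your \ref{it:qu_qu}: the completion $L$ at a place above $2$ need not have degree exactly $2$ over $\rats_2$, so the invariant is $[L:\rats_2]\cdot\tfrac12$ with $[L:\rats_2]$ even (or, equivalently, argue that $L\supseteq\rats_2(\sqrt2)$ is already a splitting field); the conclusion is unaffected.
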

\begin{proof}
  \ref{it:squaresum} and~\ref{it:qu_local}
  are well 
  known \cite[Example~38.13(a)]{huppCT}
  \cite[Ch.~III, Théorème~1]{SerreCA}.
  Assertion~\ref{it:qu_cyc} is a result of 
  Moser~\cite{Moser73}.
  (This can be shown without using the 
  Hasse-Minkowski principle:
  If the residue class of $2$ in $(\ints/p)^*$ has even multiplicative 
    order $2r$,
  then $2^r \equiv -1 \mod p$, and thus 
    $p$ divides $2^r+1$.
  Then an elementary argument shows that
  $-1$ is a sum of two squares in
  $\rats(\eps_p)$~\cite[Example~38.13(d)]{huppCT}.
  If $2$ has odd order in $(\ints/n\ints)$,
  then $\quats \otimes_{\rats}\rats_2(\eps_n)$
  is also a division ring.)
  
  To see~\ref{it:qu_qu}, assume that $n>1$.
  We have to show that 
  $-1$ is a sum of two squares in
  $\crp{K}:=\rats(\sqrt{2}, \eps_n)$.
  By the Hasse-Minkowski principle,
  it suffices to show that $-1$ is a square
  in each possible completion of $\crp{K}$.
  Since $n>1$, $\crp{K}$ can not be embedded into $\reals$.
  If $p$ is odd, then $-1$ is a sum of two squares
  in $\rats_p$ already.
  Finally, $\rats_2(\sqrt{2})$ is a quadratic extension 
  of $\rats_2$
  and thus a splitting field
  of $\quats$~\cite[Lemma~VI.2.14]{Lam05_IQFF}.
  (We notice that in~\ref{it:qu_qu},
  we can replace $\rats(\sqrt{2})$ by any field
  such that the completions at all prime ideals over $2$
  yield extensions of \emph{even} degree over $\rats_2$.)
\end{proof}

As a consequence, we get the following results.

\begin{thm}[Sehgal 1975~\cite{Sehgal75}]
  \label{t:qg_divringprod}
  The group algebra $\rats G$ is a direct product of
  division rings if and only if one of the following holds:
  \begin{enumthm}
  \item $G$ is abelian.
  \item \label{it:m_q8ab}
        $G \iso Q_8 \times (C_2)^r \times A$,
        where $r\geq 0$,
        and $A$ is abelian of odd order, and
        the multiplicative order of $2$ in $(\ints/\card{A})^*$ 
        is odd.
  \end{enumthm}
\end{thm}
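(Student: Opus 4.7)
The plan is to specialize Theorem~\ref{t:kg_divringprod} to $\crp{K}=\rats$ and then translate its condition into an explicit arithmetic statement via Lemma~\ref{l:split_quat}~\ref{it:qu_cyc}.

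By Theorem~\ref{t:kg_divringprod}, $\rats G$ is a direct product of division rings iff $G$ is abelian or $G\iso Q_8\times (C_2)^r\times A$ with $A$ abelian of odd order and $\quats\otimes_{\rats}\rats(\lambda)$ a division ring for every $\lambda\in\Lin(A)$. The first case gives condition~(i) of the theorem; so I only need to analyze the second case and show it matches~\ref{it:m_q8ab}. For this, I use that $\rats(\lambda)=\rats(\eps_n)$ where $n=\ord(\lambda)$ (as noted in the text preceding Lemma~\ref{l:split_quat}), and $n$ is odd because $A$ has odd order. Lemma~\ref{l:split_quat}~\ref{it:qu_cyc} then says that $\quats\otimes_{\rats}\rats(\eps_n)$ is a division ring iff the multiplicative order of $2$ in $(\ints/n)^*$ is odd. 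Since the orders of linear characters of $A$ run exactly through all divisors of $\exp(A)$, the second case of Theorem~\ref{t:kg_divringprod} becomes: the order of $2$ in $(\ints/n)^*$ is odd for every divisor $n$ of $\exp(A)$.

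The remaining step is to show this uniform condition is equivalent to the single condition in~\ref{it:m_q8ab}, namely that the order of $2$ in $(\ints/\card{A})^*$ is odd. For one direction, if $2$ has odd order modulo $\card{A}$, then for any $n\mid\card{A}$ (in particular any $n\mid\exp(A)$) the order of $2$ modulo $n$ divides this and so is odd as well. For the converse, taking $n=\exp(A)$ shows the order of $2$ modulo $\exp(A)$ is odd. Now $\card{A}$ and $\exp(A)$ have the same (odd) prime divisors, and for each such odd prime $p$ the order of $2$ modulo $p^a$ differs from the order of $2$ modulo $p$ by a factor which is a power of $p$, hence odd. Taking the lcm of the local orders at each prime power dividing $\card{A}$ (resp.\ $\exp(A)$) then shows the orders of $2$ modulo $\card{A}$ and modulo $\exp(A)$ have the same $2$\nbd part. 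Hence both conditions on parity coincide, completing the equivalence.

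The main obstacle, if it can be called one, is just the elementary arithmetic comparison between $\card{A}$ and $\exp(A)$ in the last step; it is routine using the structure of $(\ints/p^a)^*$ for odd primes $p$.
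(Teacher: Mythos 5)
Your proposal is correct and follows exactly the route the paper intends: the paper states Theorem~\ref{t:qg_divringprod} as an immediate consequence of Theorem~\ref{t:kg_divringprod} and Lemma~\ref{l:split_quat}~\ref{it:qu_cyc}, which is precisely your derivation. Your extra care in checking that the parity of the order of $2$ modulo $\exp(A)$ agrees with that modulo $\card{A}$ (via the structure of $(\ints/p^a)^*$ for odd $p$) fills in the only detail the paper leaves implicit, and it is done correctly.
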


\begin{thm}\label{t:local_dpdivrings}\hfill
  \begin{enumthm}
  \item $\rats_2 G$ is a direct product of division rings 
        if and only if\/ $\rats G$ is a direct product of 
        division rings.
  \item Let $p$ be an odd prime.
        Then $\rats_p G$ is a direct product of division rings
        if and only if $G$ is abelian.
  \item $\reals G$ is a direct product of division rings
        if and only if either $G$ is abelian,
        or $G \iso Q_8 \times (C_2)^r$ for some
        $r\geq 0$.
  \end{enumthm}
\end{thm}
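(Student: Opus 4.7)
The plan is to deduce all three parts from Theorem~\ref{t:kg_divringprod} combined with the tensor-product computations in Lemma~\ref{l:split_quat}. By Theorem~\ref{t:kg_divringprod}, if $\crp{K}G$ is a direct product of division rings, then either $G$ is abelian, or $G\iso Q_8\times(C_2)^r\times A$ with $A$ abelian of odd order; in the latter case, one additionally has to check that $\quats\otimes_{\rats}\crp{K}(\lambda)$ is a division ring for every $\lambda\in\Lin(A)$. So each of the three assertions reduces to analysing this tensor-product condition for $\crp{K}=\rats_2$, $\crp{K}=\rats_p$, or $\crp{K}=\reals$.

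For part~(iii), since $\card{A}$ is odd, any nontrivial $\lambda\in\Lin(A)$ takes values in $\rats(\eps_n)$ with $n>1$ odd, hence takes non-real values, so $\reals(\lambda)=\compl$; but $\quats\otimes_{\rats}\compl\iso\mat_2(\compl)$ is not a division ring, forcing $A=1$ in the quaternionic case. Conversely, for $G\iso Q_8\times(C_2)^r$ the only relevant tensor product is $\quats\otimes_{\rats}\reals$, which is a division ring by Lemma~\ref{l:split_quat}~\ref{it:qu_local}. For part~(ii), taking $\lambda=1$ already gives $\rats_p(\lambda)=\rats_p$, and Lemma~\ref{l:split_quat}~\ref{it:qu_local} shows that $\quats\otimes_{\rats}\rats_p$ is not a division ring for odd $p$; so the quaternionic case is immediately excluded and $G$ must be abelian, while the converse is clear as $\rats_pG$ is then a direct product of fields.

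For part~(i), the group-theoretic skeleton produced by Theorem~\ref{t:kg_divringprod} is identical over $\rats$ and $\rats_2$, so only the quaternionic condition needs to be compared. By Lemma~\ref{l:split_quat}~\ref{it:qu_cyc}, for any odd $n$, both $\quats\otimes_{\rats}\rats(\eps_n)$ and $\quats\otimes_{\rats}\rats_2(\eps_n)$ are division rings if and only if the multiplicative order of $2$ modulo $n$ is odd. The orders $n=\ord(\lambda)$ realised by $\lambda\in\Lin(A)$ are precisely the divisors of $\exp(A)$; since $\exp(A)$ and $\card{A}$ have the same prime divisors, and odd-ness of the multiplicative order of $2$ modulo $n$ depends only on the set of primes dividing $n$, the joint condition over all $\lambda$ collapses to the single numerical condition ``$\ord_{(\ints/\card{A})^*}(2)$ is odd'' of Theorem~\ref{t:qg_divringprod}. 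This yields the equivalence.

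The one non-trivial input is Lemma~\ref{l:split_quat}~\ref{it:qu_cyc}, a manifestation of the Hasse--Minkowski principle for the quaternion algebra; beyond that, the proof is essentially bookkeeping. The subtlest step is the reduction from ``all $\lambda\in\Lin(A)$'' to the condition on $\card{A}$ in part~(i), which rests on the fact that $\exp(A)$ and $\card{A}$ share the same set of prime divisors.
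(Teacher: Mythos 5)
Your proposal is correct and follows exactly the route the paper intends: the paper states Theorem~\ref{t:local_dpdivrings} (together with Theorem~\ref{t:qg_divringprod}) as an immediate consequence of Theorem~\ref{t:kg_divringprod} and Lemma~\ref{l:split_quat}, which is precisely the reduction you carry out. Your extra bookkeeping in part (i) — that oddness of the order of $2$ modulo an odd $n$ depends only on the primes dividing $n$, so the condition over all $\lambda\in\Lin(A)$ collapses to the condition modulo $\card{A}$ — is sound (the order of $2$ mod $p^a$ is the order mod $p$ times a power of the odd prime $p$) and matches how the paper implicitly passes between $\exp(A)$ and $\card{A}$.
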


\section{Classification over the reals}
\label{sec:classr}

In this section, we prove Theorem~\ref{main:reals}.
We start with a simple observation.

\begin{lemma}\label{l:nn_ord4}
  Suppose that $\NKer_{\reals}(G) \neq 1$,
  and $\erz{g}\not\nteq G$.
  Then $g$ has order $4$,
  and $\NKer_{\reals}(G) = \R(G) = \erz{g^2}$
  has order $2$.
\end{lemma}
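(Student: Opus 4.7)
The plan is to first establish that $N := \NKer_{\reals}(G)$ is a proper nontrivial subgroup of $\erz{g}$, and then to show $|g| = 4$; since $C_4$ has a unique proper nontrivial subgroup, this immediately gives $N = \erz{g^2}$ of order $2$. By Lemma~\ref{l:many_normals}, $N \leq \R(G) \leq \erz{g}$ (the last inclusion because $\erz{g}$ is non-normal), so $N = \erz{g^d}$ for some $d \mid n := |g|$. Moreover $g \notin N$: otherwise $\erz{g} \leq N$ would give $\erz{g} = N$, which would be normal since $N$ is characteristic, contradicting the hypothesis. Hence $1 \leq d < n$. If $n$ were prime, $\erz{g}$ would have no proper nontrivial subgroup, forcing the contradiction $N = \erz{g}$; so $n$ is composite.

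The key auxiliary observation is the contrapositive of Lemma~\ref{l:many_normals}: whenever $e \mid n$ and $e \nmid d$, the subgroup $\erz{g^e}$ must be normal in $G$ (otherwise $N \leq \erz{g^e}$ would force $g^d \in \erz{g^e}$, i.e., $e \mid d$). This produces many normal cyclic subgroups inside $\erz{g}$.

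To conclude $n = 4$, I plan to induct on $|G|$ using Lemma~\ref{l:subgroupskernel}. Pick $h \in G$ with $g^h \notin \erz{g}$ and set $H := \erz{g, h}$. In $H$, the subgroup $\erz{g}$ is still non-normal and proper (since $h \notin \erz{g}$: otherwise $g^h = g \in \erz{g}$), so $\NKer_{\reals}(H) \leq \R(H) \leq \erz{g} < H$. Hence $\NKer_{\reals}(H) < H$, and Lemma~\ref{l:subgroupskernel} yields $N \leq \NKer_{\reals}(H)$, in particular $\NKer_{\reals}(H) \neq 1$. If $H$ is properly contained in $G$, the induction hypothesis applied to $H$ and the non-normal cyclic subgroup $\erz{g}$ gives $|g| = 4$, and then $N = \erz{g^2}$ as noted.

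The main obstacle is the base case $H = G$, where $G = \erz{g, h}$ is $2$-generated. The plan there is to combine the many normal subgroups of $\erz{g}$ from the key observation with the Sylow decomposition $\erz{g} = \prod_p P_p$: if $n$ has two distinct prime divisors, one shows that at most one Sylow subgroup of $\erz{g}$ can fail to be normal in $G$ (otherwise the product $\erz{g}$ would itself be normal), which is very restrictive. Coupled with the character-theoretic constraints of Lemma~\ref{l:res_skewlin} (restrictions of skew-linear characters to $\erz{g}$ take the form $\mu + \bar\mu$ or $2\mu$) and the bound $m_{\reals} \leq 2$ from Lemma~\ref{l:sifacts}, this should force $n = 4$. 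The hardest subcase is when $n = p^a$ is a proper prime power with $(p, a) \neq (2, 2)$, where the Sylow argument is uninformative and one must analyze the image of $G$ in $\quats^*$ under a degree-$2$ skew-linear character, using the relation $g^h \notin \erz{g}$ to derive a contradiction.
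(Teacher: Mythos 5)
Your preliminary reductions are fine: $N:=\NKer_{\reals}(G)\leq \R(G)\leq\erz{g}$, the inclusion $N<\erz{g}$ is strict because $N$ is characteristic while $\erz{g}$ is not normal, and hence $\ord(g)$ is composite and it suffices to prove $\ord(g)=4$. The induction via Lemma~\ref{l:subgroupskernel} reducing to $H=\erz{g,h}$ is also sound. But the entire content of the lemma sits in what you call the base case, and there the proposal stops being a proof: the phrases ``this should force $n=4$'' and ``one must analyze the image of $G$ in $\quats^*$ \dots to derive a contradiction'' describe a hope, not an argument. Your Sylow observation only shows that $N$ lies in a single non-normal Sylow subgroup of $\erz{g}$, i.e.\ that $N$ is a cyclic $p$-group; it says nothing about $\ord(g)$, and you concede that the prime-power case (e.g.\ $n=8$, $n=9$, $n=p^2$) --- which is exactly the case one must exclude --- is untouched. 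Likewise, Lemma~\ref{l:res_skewlin} only tells you $\chi_{\erz{g}}=\mu+\overline{\mu}$ for a skew-linear $\chi$ of degree $2$; it does not by itself constrain $\ord(\mu)$. So there is a genuine gap at the decisive step.

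For comparison, the paper closes this step with a short idempotent argument that your outline never reaches: take a faithful $\lambda\in\Lin\erz{g}$; since $N\neq 1$ and $N\leq\erz{g}$, Lemma~\ref{l:nt_id} gives $\lambda(e_N)=0$, so $f=e_{\lambda}+e_{\overline{\lambda}}\in\reals\erz{g}$ is an idempotent with $fe_N=0$, hence \emph{central} in $\reals G$ by Lemma~\ref{l:idemp}. Writing out
\[
 f=\frac{1}{\card{\erz{g}}}\sum_{h\in\erz{g}}\bigl(\lambda(h)+\overline{\lambda}(h)\bigr)h
\]
and conjugating by an $x$ with $g^{x}\notin\erz{g}$, the coefficient of $g^{x}$ must vanish, i.e.\ $\lambda(g)+\overline{\lambda(g)}=0$; since $\lambda(g)$ is a primitive $n$-th root of unity this forces $n=4$. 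If you want to salvage your approach, this is the mechanism you would need to reproduce (or replace) in the two-generated case; as written, the proposal does not establish the lemma.
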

\begin{proof}
  By Lemma~\ref{l:many_normals} and the definition of $\R(G)$, 
  we have
  \[ 1 \neq N:= \NKer_{\reals}(G) \leq  \R(G) < \erz{g}.
  \]
  The last inequality is strict since
   $\R(G)$ is normal in $G$, but $\erz{g}$ is not.
  In particular, the first claim of the lemma implies the second one.
  
  Let $\lambda\in \Lin\erz{g}$ be faithful.
  By Lemma~\ref{l:nt_id} applied to
  $N \nteq \erz{g}$ and since $N\neq 1$, it follows
  $\lambda(e_N)=0$.
  Thus
  \[ f = e_{\lambda} + e_{\overline{\lambda}} 
       = \frac{1}{ \card{\erz{g}} }
         \sum_{h\in \erz{g}} 
            \big( \overline{\lambda}(h) + \lambda(h) \big) h
       \in \reals \erz{g} 
  \]
  is an idempotent with $f e_N = 0$.
  It follows from Lemma~\ref{l:idemp}
  that $f$ is a central idempotent in 
  $\reals G$, and so
  $f^x = f$ for all $x\in G$.
  But by assumption, there is some $x\in G$ such that
  $g^x \notin \erz{g}$.
  It follows that $\overline{\lambda(g)} + \lambda(g) = 0$.
  As $\lambda(g)$ is an $n$-th root of unity,
  where $n=\ord(g)$, this is only possible when $\ord(g) = 4$.  
\end{proof}

By this lemma, $\card{ \NKer_{\reals}(G) } \leq 2$
except when $G$ is a Dedekind group.
Next we want to compute
$\NKer_{\reals}(G)$ in each case of 
Theorem~\ref{main:reals}.
In particular,
this will prove the ``if'' part of Theorem~\ref{main:reals}.

To show that certain characters are skew-linear,
we use the Frobenius-Schur indicator.
Recall that for $\chi\in \Irr G$, 
its \defemph{Frobenius-Schur indicator} is defined by
\[ \FS(\chi) := \frac{1}{\card{G}}
          \sum_{g\in G} \chi(g^2).
\]
When
$\FS(\chi) = 1$, then
$\chi = \overline{\chi}$ and $\chi$ is afforded by a 
representation with entries in $\reals$, 
so $m_{\reals}(\chi) = 1$.
When $\FS(\chi) = 0$, then
$\chi\neq \overline{\chi}$, and again $m_{\reals}(\chi)=1$.
Finally, when $\FS(\chi) = -1$, then
$\chi = \overline{\chi}$,
but $m_{\reals}(\chi) = 2$.
In the last case, 
there is a simple $\reals G$-module affording
$2\chi$, and
$\enmo_{\reals G}(S) \iso \quats$,
the division ring of Hamilton's quaternions
\cite[Theorem~13.12]{huppCT}.

In particular, 
$\chi\in \Irr G$ is skew-linear over $\reals$, 
if and only if either
$\chi(1)=1$ ($\chi$ is linear), or
$\chi(1) =  2$ and $\FS(\chi) = -1$.

When $G$ is abelian, then trivially
$\NKer_{\reals}(G)=G$.
Next we consider generalized dicyclic groups.
Notice that the Dedekind group $Q_8 \times (C_2)^r $
is generalized dicyclic, 
with abelian subgroup $ A \iso C_4 \times (C_2)^r$.

\begin{lemma}\label{l:gendickern}
  Let $G$ be generalized dicyclic, and
  let $g\in G$ and $A\nteq G$ be as in the definition
  (before Theorem~\ref{main:reals}).
  Then  $\R(G)= \NKer_{\reals}(G) = G$ if\/ 
        $G/ \erz{g^2}$ is abelian, 
        and\/ $\R(G) = \NKer_{\reals}(G) = \erz{g^2}$ 
        otherwise.
\end{lemma}
\begin{proof}
  First, observe that
  $g^2 = (g^2)^g = g^{-2}$ and thus
  $g^4 =1$. 
  Moreover, for any $a\in A$, we have
  $(ga)^2 = g^2 a^ga = g^2$.
  By assumption, $g^2\neq 1$.

  In view of Lemma~\ref{l:nn_ord4}, it suffices to show
  that $\erz{g^2} \subseteq \NKer_{\reals}(G)$, that is,
  all characters $\chi\in \Irr G$ with $g^2\notin \Ker \chi$
  are skew-linear.
  (In the case when $G/\erz{g^2}$ is abelian,
   all characters of $G/\erz{g^2}$
   are linear and thus it will follow that all characters of $G$
   are skew-linear and $G$ is Dedekind.
   Conversely, if $\NKer_{\reals}(G)> \erz{g^2}$,
   then $\NKer_{\reals}(G)=G$ by Lemma~\ref{l:nn_ord4},
   and then
   $G \iso Q_8 \times (C_2)^r$ by Theorem~\ref{t:local_dpdivrings}
   and $G/\erz{g^2}$ is abelian.)

  So suppose that $\chi \in  \Irr G$ is not linear, 
  and $g^2 \notin \Ker \chi$.
  Let $\lambda\in \Lin A$ be a constituent
  of the restriction $\chi_A$.
  Then  $\chi = \lambda^G$ 
  by Clifford theory~\cite[Corollary~6.19]{isaCTdov}.
  As $a^g = a^{-1}$ for all $a\in A$, we have
  $\lambda^g = \overline{\lambda}$. 
  Also, $\lambda(g^2) \neq 1$,
  and thus $\lambda(g^2) = -1$ and $\chi(g^2) = -2$.
  It follows that
  \begin{align*}
    \FS(\chi) 
      = \frac{1}{\card{G}}
                 \sum_{x\in G} \chi(x^2)
      &= \frac{1}{\card{G}}
          \sum_{a\in A} (\chi((ga)^2) + \chi(a^2))         
      \\
      &= \frac{1}{\card{G}}
         \left( -2\card{A} + 
                \sum_{a\in A} ( \lambda(a^2) + \overline{\lambda(a^2)} )
         \right)
     \\
     &= \frac{-2\card{A}}{\card{G}}
      = -1.
  \end{align*}
  Here we have used that $(ga)^2 = g^2$ for all $a\in A$,
  and that $\sum_{a\in A} \lambda(a^2) = \sum_{a\in A} \lambda^2(a) =0$
  since 
  $\overline{\lambda} \neq \lambda$ and thus $\lambda^2\neq 1$.
  Since $\FS(\chi)=-1$ and $\chi(1)=2$,
  it follows that $\chi$ is indeed skew-linear, as claimed.  
\end{proof}

\begin{lemma}\label{l:type3kernel}
  When $G = \erz{u} \times \erz{x,y} \times E$
  with $\erz{u} \iso C_4$, $\erz{x,y}\iso Q_8$ 
  and $E\iso (C_2)^r$,
  then $\NKer_{\reals}(G) = \R(G) = \erz{u^2x^2} \neq 1$.
\end{lemma}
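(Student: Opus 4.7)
The plan is to sandwich
\[
    \erz{u^2 x^2} \;\leq\; \NKer_{\reals}(G) \;\leq\; \R(G) \;\leq\; \erz{u^2 x^2},
\]
where the middle inclusion is Lemma~\ref{l:many_normals}. Forcing equality throughout yields both equalities claimed in the lemma; and $u^2 x^2 \neq 1$ is immediate from the direct product decomposition, as $u^2$ and $x^2$ are the nontrivial central involutions of two different direct factors.

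For the lower bound on $\NKer_{\reals}(G)$ I would decompose every $\chi \in \Irr G$ as $\chi = \mu \times \theta \times \nu$ with $\mu \in \Lin\erz{u}$, $\theta \in \Irr\erz{x,y}$, $\nu \in \Lin E$, and case split on $\theta$. If $\theta$ is linear then so is $\chi$, hence skew-linear. Otherwise $\theta = \tau$ is the faithful degree-$2$ character of $Q_8$, for which $\FS(\tau) = -1$ and $\tau(x^2) = -2$. Multiplicativity of the Frobenius-Schur indicator over direct products, combined with $\FS(\nu) = 1$ (as $E$ is an elementary abelian $2$-group), gives $\FS(\chi) = -\FS(\mu)$. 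When $\ord(\mu) \leq 2$ this yields $\FS(\chi) = -1$; since $\chi(1) = 2$, $\chi$ is skew-linear over $\reals$. When $\ord(\mu) = 4$ we have $\FS(\mu) = 0$ and $\chi$ is not skew-linear, but
\[
    \chi(u^2 x^2) = \mu(u^2)\,\tau(x^2)\,\nu(1) = (-1)(-2)(1) = 2 = \chi(1),
\]
so $u^2 x^2 \in \Ker \chi$. Hence $u^2 x^2$ lies in the kernel of every non-skew-linear character, proving $\erz{u^2 x^2} \leq \NKer_{\reals}(G)$.

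For the upper bound on $\R(G)$ I would exhibit two non-normal cyclic subgroups of order $4$ whose intersection is exactly $\erz{u^2 x^2}$. Since $u$ is central and $y$ inverts $x$ inside $Q_8$, the element $ux$ of order $4$ satisfies $(ux)^y = u x^{-1}$, and a direct check (against the four elements of $\erz{ux}$) shows $u x^{-1} \notin \erz{ux}$; so $\erz{ux}$ is not normal. The symmetric computation with $x$ and $y$ swapped shows $\erz{uy}$ is not normal. Both subgroups contain $u^2 x^2 = (ux)^2 = (uy)^2$ as their unique involution, and they are distinct, so their intersection is $\erz{u^2 x^2}$. Thus $\R(G) \leq \erz{u^2 x^2}$, closing the chain.

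The only step that genuinely uses the special structure of this group is the Frobenius-Schur case analysis in the middle paragraph: the non-skew-linearity of $\chi$ forces $\ord(\mu) = 4$, and it is precisely at this point that the complex piece from $\mu$ and the quaternionic piece from $\tau$ combine so that $u^2 x^2$ lands in $\Ker\chi$. Everything else is bookkeeping from the direct product decomposition.
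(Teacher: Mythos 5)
Your proof is correct and follows essentially the same route as the paper: the sandwich via Lemma~\ref{l:many_normals}, the Frobenius--Schur indicator (multiplicative over the direct product) to show every character not containing $u^2x^2$ in its kernel is skew-linear over $\reals$, and nonnormal cyclic subgroups of order $4$ to bound $\R(G)$. The only cosmetic difference is that you intersect the two nonnormal subgroups $\erz{ux}$ and $\erz{uy}$, whereas the paper uses only $\erz{ux}$ (implicitly combined with the fact that $\R(G)$ is normal, hence a proper subgroup of $\erz{ux}$); both work, and your case split on $\ord(\mu)$ is, if anything, slightly more explicit than the paper's.
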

\begin{proof}
  As $\erz{ux} \not\nteq G$, we have $\R(G) \leq \erz{u^2x^2}$.
  Let $\tau$ be the nonlinear irreducible character of $\erz{x,y}$ and 
  $\lambda$ a character of $\erz{u}$ with
  $\lambda \neq \overline{\lambda}$. 
  If $\chi$ is a character with
  $\chi(u^2x^2) \neq \chi(1)$, then either 
  $\chi$ is linear, or
  $\chi = \lambda^2 \times \tau \times \sigma$
  for some $\sigma\in \Lin E$.
  The latter characters all have $\FS(\chi)=-1$.
  Thus $ \erz{u^2 x^2} \subseteq \NKer_{\reals}(G) $.  
\end{proof}

\begin{lemma}
  When $G= \erz{u,v} \times \erz{x,y} \times E$
  with $\erz{u,v}\iso \erz{x,y} \iso Q_8$ and $E\iso(C_2)^r$,
  then $\NKer_{\reals}(G) = \R(G)= \erz{u^2x^2} \neq 1$.
\end{lemma}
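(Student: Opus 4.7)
The structure of the proof is parallel to Lemma~\ref{l:type3kernel}: establish the upper bound $\R(G)\leq \erz{u^2x^2}$ by exhibiting a non-normal cyclic subgroup whose square is $u^2x^2$, combine with Lemma~\ref{l:many_normals} to get $\NKer_{\reals}(G)\leq \erz{u^2x^2}$, and then produce the reverse inclusion by a Frobenius-Schur calculation on irreducible characters of the direct product.

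For the upper bound, the plan is to consider the element $ux$, which has order~$4$ since $u$ and $x$ commute and both have order~$4$ with $u^2=v^2,\;x^2=y^2$ central. Conjugating by $v$ sends $ux$ to $u^{-1}x$, which is not a power of $ux$ (the four powers of $ux$ are $1,\;ux,\;u^2x^2,\;u^{-1}x^{-1}$). Hence $\erz{ux}\not\nteq G$. A direct listing shows $\erz{ux}\cap \erz{ux}^v=\erz{u^2x^2}$, so $\R(G)\leq \erz{u^2x^2}$, and by Lemma~\ref{l:many_normals} so is $\NKer_{\reals}(G)$.

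For the reverse inclusion, I would classify the irreducible characters of $G$ using the direct product decomposition. Every $\chi\in\Irr G$ has the form $\chi=\chi_1\times\chi_2\times\sigma$, with $\chi_i\in\Irr(Q_8)$ and $\sigma\in\Lin(E)$; let $\tau$ denote the unique nonlinear irreducible character of $Q_8$, which has $\FS(\tau)=-1$ and $\tau(u^2)=\tau(x^2)=-2$. Using the multiplicativity $\FS(\chi_1\times\chi_2\times\sigma)=\FS(\chi_1)\FS(\chi_2)\FS(\sigma)$, the case analysis is: if $\chi_1$ and $\chi_2$ are both linear, then $\chi$ is linear; if exactly one of $\chi_1,\chi_2$ equals $\tau$, then $\chi(1)=2$ and $\FS(\chi)=-1$, so $\chi$ is skew-linear over $\reals$; and if $\chi_1=\chi_2=\tau$, then $\chi(1)=4$ and $\FS(\chi)=1$, so $m_{\reals}(\chi)=1<\chi(1)$ and $\chi$ fails to be skew-linear, but in that case $\chi(u^2x^2)=\tau(u^2)\tau(x^2)\sigma(1)=4=\chi(1)$, so $u^2x^2\in\Ker\chi$. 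Therefore every non-skew-linear irreducible character contains $\erz{u^2x^2}$ in its kernel, and $\erz{u^2x^2}\leq\NKer_{\reals}(G)$.

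The only mild obstacle is being careful that no character outside the ``both $\tau$'' case slips through as non-skew-linear; the multiplicativity of the Frobenius-Schur indicator on outer tensor products and the fact that $m_\reals\in\{1,2\}$ (Lemma~\ref{l:sifacts}) make this routine, but the casework has to be exhaustive to be sure no character with $\chi(u^2x^2)\neq\chi(1)$ and $\chi(1)>m_\reals(\chi)$ exists.
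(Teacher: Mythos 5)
Your proposal is correct and follows essentially the same route as the paper: show $\erz{ux}$ is nonnormal to get $\R(G)\leq\erz{u^2x^2}$, then verify via the Frobenius--Schur indicator that every irreducible character not containing $u^2x^2$ in its kernel (necessarily of the form $\tau\times\lambda\times\sigma$ or $\lambda\times\tau\times\sigma$) is skew-linear over $\reals$. Your extra observation that the degree-$4$ characters $\tau_1\times\tau_2\times\sigma$ have $u^2x^2$ in their kernel is exactly what the paper leaves implicit, so the two arguments coincide.
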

\begin{proof}
  As $\erz{ux} \not\nteq G$, we have $\R(G) \leq \erz{u^2x^2}$.
  Let $\tau_1$ and $\tau_2$ be the nonlinear characters of
  $\erz{u,v}$ and $\erz{x,y}$, respectively.
  If $\chi(u^2x^2) \neq \chi(1)$, then either
  $\chi = \tau_1 \times \lambda \times \sigma$ with
  $\lambda \in \Lin\erz{x,y}$ and $\sigma\in \Lin(E)$,
  or $\chi = \lambda \times \tau_2 \times \sigma$ 
  with $\lambda\in \Lin\erz{u,v}$ and $\sigma\in \Lin(E)$.
  In both cases, $\FS(\chi)=-1$
  and thus $\erz{u^2x^2} \leq \NKer_{\reals}(G)$.
\end{proof}

This lemma finishes the proof of the ``if'' part of 
Theorem~\ref{main:reals}.
We now start working for the ``only if''-part.

\begin{lemma}\label{l:r_not2}
  Suppose that $1 < \NKer_{\reals}(G) < G$ and that
  $G$ is not a $2$-group.
  Then $G$ is generalized dicyclic.
\end{lemma}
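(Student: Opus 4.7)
My plan is to exploit Lemma~\ref{l:nn_ord4}: every non-normal cyclic subgroup of $G$ has order exactly $4$ and squares into $\NKer_{\reals}(G) = \erz{g^2}$, so every cyclic subgroup of $G$ of order $\neq 4$ is normal. First I verify that $G$ has a non-normal cyclic subgroup. If $G$ were Dedekind and not a $2$\nbd group, then by Theorem~\ref{t:dedekind} and nonabelianness (else $\NKer_{\reals}(G)=G$) we would have $G\iso Q_8\times (C_2)^r\times A$ with $A$ abelian of odd order $>1$; but for any nontrivial $\lambda\in\Lin A$ we have $\reals(\lambda)=\compl$, so $\quats\otimes_{\rats}\reals(\lambda)\iso\mat_2(\compl)$ is not a division ring, and the remark after Theorem~\ref{t:kg_divringprod} forces $\NKer_{\reals}(G)=1$, contradicting the hypothesis. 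Thus some $g\in G$ has $\erz{g}$ non-normal, and Lemma~\ref{l:nn_ord4} gives $\ord(g)=4$.

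Next, since every cyclic subgroup of order $2$ is normal, every involution is central, and since every cyclic subgroup of odd order is normal, the subgroup $H$ generated by all odd-order elements is Dedekind of odd order, hence abelian; it is the normal Hall $2'$\nbd subgroup. Then $G=H\rtimes S$ for $S\in\Syl_2(G)$, and I may assume $g\in S$; moreover $H\neq 1$ (as $G$ is not a $2$\nbd group) and $g^2\in\Z(G)$ centralizes $H$. The pivotal step is that $g$ inverts~$H$. Since $g^2$ centralizes $H$ and each $\erz{h}$ is normal, $g$ acts on every Sylow subgroup $H_p$ of $H$ uniformly as $+1$ or $-1$ (a ``mixed'' action on $h_1,h_2$ would force $\erz{h_1h_2}$ to fail to be $g$-stable). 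If $g$ acted as $+1$ on some $H_p\neq 1$, pick $h\in H_p$ of order $p$; then $gh$ commutes, has order $4p\neq 4$, so $\erz{gh}$ is normal in $G$, making its unique order-$4$ subgroup $\erz{(gh)^p}=\erz{g^{\pm 1}}=\erz{g}$ normal in $G$ --- contradiction.

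Set $T:=\C_S(H)$, so $\C_G(H)=H\times T$. The same reasoning shows every cyclic subgroup of $T$ is normal in $G$ --- for $t\in T$ of order $4$ with $\erz{t}$ non-normal, the previous paragraph applied to $t$ would force $t$ to invert $H$, but $t\in\C_G(H)$, forcing $H=1$. So $T$ is a Dedekind $2$\nbd group, hence abelian or $Q_8\times(C_2)^r$. To exclude the latter, I analyze the action of $g$ on $Q_8\leq T$: writing $gxg^{-1}=x^{\epsilon_x}$ with $\epsilon_x\in\{\pm 1\}$ for $x\in\{i,j,k\}$, the identity $k=ij$ gives $\epsilon_i\epsilon_j\epsilon_k=1$, so some $\epsilon_x=+1$, say $g$ centralizes $i$. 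Then $(gi)^2=g^2\cdot(-1)$: either $gi$ is a central involution inverting $H$ (forcing $H=1$), or $gi$ has order $4$ with $(gi)^2\neq g^2$, so $\erz{(gi)^2}\neq\NKer_{\reals}(G)$ and Lemma~\ref{l:nn_ord4} forces $\erz{gi}$ to be normal; but then, using $h^{-1}gh=h^{-2}g$ (from $g$ inverting $H$), one computes $(gi)^h=h^{-2}(gi)$ for $h\in H$, giving $h^{-2}\in\erz{gi}\cap H=1$ and hence $H=1$, a contradiction. Thus $T$ is abelian.

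An almost identical calculation shows $g$ inverts each $t\in T$: writing $gtg^{-1}=t^k$, one has $(gt)^2=g^2t^{k+1}$, and any $k\not\equiv -1\pmod{\ord(t)}$ leads either to $gt$ being a central involution inverting $H$, or to $\erz{gt}$ non-normal with $\erz{(gt)^2}\neq\erz{g^2}$ (contradicting Lemma~\ref{l:nn_ord4}), or to $\erz{gt}$ normal forcing $h^{-2}\in\erz{gt}\cap H=1$. Therefore $A:=HT$ is abelian of index $2$ in $G$ (since $g^2\in T$ but $g\notin T$), and the order-$4$ element $g$ inverts $A$; hence $G$ is generalized dicyclic. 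The main obstacle is the twin case analyses in the last two paragraphs --- ruling out $Q_8\leq T$ and verifying that $g$ inverts every $t\in T$ --- which both hinge on the rigidity provided by the constraint $\NKer_{\reals}(G)=\erz{g^2}$ from Lemma~\ref{l:nn_ord4}.
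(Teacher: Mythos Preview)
Your proof is essentially correct but takes a noticeably different route from the paper, and there is one small omission. The paper sets $A=\C_G(U)$ (with $U$ the normal $2$-complement) and establishes in one stroke that $g\notin A$ \emph{iff} $\erz{g}$ is non-normal \emph{iff} $g^2=z$; from this, both $a^g=a^{-1}$ for $a\in A$ and $\card{G:A}=2$ fall out in two lines. You instead build $A=H\times T$ with $T=\C_S(H)$ and have to do separate case analyses: showing $g$ inverts $H$, showing $T$ is Dedekind, excluding $Q_8\times(C_2)^r$ for $T$, and showing $g$ inverts $T$. Each step is fine, but the aggregate is considerably longer than the paper's argument, and the $Q_8$-exclusion and the ``$g$ inverts $T$'' step are somewhat delicate (though your sign computation $\epsilon_i\epsilon_j\epsilon_k=1$ and the trichotomy for $(gt)^2$ are both valid).

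The one genuine omission is the justification that $A=HT$ has index~$2$ in $G$: the parenthetical ``since $g^2\in T$ but $g\notin T$'' only shows $gT$ has order~$2$ in $S/T$, not that $S/T$ itself has order~$2$. The fix is short and uses ingredients you already have: for any $s\in S\setminus T$, if $\erz{s}$ were normal then $s^h\in\erz{s}\cap sH=\{s\}$ for all $h\in H$ (as $\erz{s}\leq S$ and $S\cap H=1$), forcing $s\in T$; so $\erz{s}$ is non-normal, hence by your paragraph~3 argument $s$ inverts $H$, whence $sg^{-1}\in T$ and $s\in gT$. This gives $S=T\cup gT$. You should insert this sentence before concluding.
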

\begin{proof}
  By Corollary~\ref{c:dede_nker} and Lemma~\ref{l:nn_ord4},
  we have that $\NKer_{\reals}(G) = \R(G) = \erz{z}$,
  where $z$ has order~$2$.
  Every odd-order subgroup of $G$ is normal in $G$,
  and in particular the Sylow $p$-subgroups,
  for $p$ odd, generate a normal $2$-complement, $U$,
  of $G$.
  As $U$ is Dedekind, it follows that $U$ is abelian.
  
  Now set $A= \C_G(U) $, which contains $U$.
  There is $g\in G$ such that $\erz{g}\not\nteq G$.
  By Lemma~\ref{l:nn_ord4}, we have $g^4 = 1$.
  If $gu=ug$ for some $u\in U$, 
  then $\erz{g}$ is characteristic in
  $\erz{gu} = \erz{g} \times \erz{u}$,
  and thus $\erz{gu}\not\nteq G$.
  Again by Lemma~\ref{l:nn_ord4},
  it follows that $(gu)^4 = 1$ and thus $u=1$.
  Thus $\C_U(g)=1$ and $g\notin A$.
  In particular, $A < G$.
  
  Conversely, let $g\notin A$, 
  and let $s=g_2$ be the $2$-part of~$g$.
  Then $gA = sA$ and thus $s\notin A$.
  Thus $u^s \neq u$ for some $u\in U$, 
  and thus $s^u = s[s,u] \notin \erz{s}$.
  It follows that $\erz{s}$ is not normal in $G$,
  and thus $\erz{g}$ is not normal in $G$.
  By Lemma~\ref{l:nn_ord4}, it follows that
  $g^4=1$ and $g^2 = z$ (and thus $s=g$).
  
  In particular, for $g\in G\setminus A$ and $a\in A$, we have
  $g^2 = z = (ga)^2 = g^2 a^g a$, 
  and thus $a^g = a^{-1}$.
  For $u\in U$ and $g$, $h\in G\setminus A$
  we have $u^g = u^{-1} = u^h$ and thus
  $gh^{-1}\in \C_G(U) = A$, so
  $\card{G:A} = 2$.
  Thus $G$ is generalized dicyclic.
\end{proof}

To finish the proof of the ``only if'' part
of Theorem~\ref{main:reals}, we use a part of
Blackburn's classification~\cite[Theorem~1]{Blackburn66}:

\begin{thm}[Blackburn 1966]\label{t:blackburn1}
  Let $G$ be a $p$-group with $\R(G)\neq 1$.
  Then one of the following holds:
  \begin{enumthm}
  \item $G$ is abelian.
  \item $p=2$ and $G$ is generalized dicyclic.
  \item $p=2$ and $G \iso C_4 \times Q_8 \times (C_2)^r$, $r\in \nats $.
  \item $p=2$ and $G \iso Q_8 \times Q_8 \times (C_2)^r$, $r\in \nats $.
  \end{enumthm}
\end{thm}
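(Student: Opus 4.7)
My plan is a two-stage reduction: first settle the Dedekind case via Theorem~\ref{t:dedekind}, then analyze non-Dedekind $p$-groups with $\R(G) \neq 1$ by exploiting the fact that every non-normal subgroup contains $\R(G)$. If every subgroup of $G$ is normal, Theorem~\ref{t:dedekind} forces $G$ to be abelian or isomorphic to $Q_8 \times (C_2)^r \times A$ with $A$ abelian of odd order; since $G$ is a $p$\nbd group, $A=1$ for $p=2$, while the $Q_8$ factor cannot appear for $p$ odd. This lands us in case~(i), or in case~(ii) via the observation that $Q_8 \times (C_2)^r$ is generalized dicyclic with abelian subgroup $\erz{i} \times (C_2)^r$ inverted by $j$.

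Now assume $G$ is non-Dedekind. Since $\R(G)$ is characteristic and hence normal in the $p$\nbd group $G$, we have $\R(G) \cap \Z(G) \neq 1$, so we may fix $z \in \R(G) \cap \Z(G)$ of order $p$. A crucial initial observation is that $\Omega_1(G) \leq \Z(G)$: if $g$ had order $p$ with $\erz{g}$ non-normal, the inclusion $\R(G) \leq \erz{g}$ would force $\R(G) = \erz{g}$, a characteristic and hence normal subgroup, contradicting $\erz{g} \not\nteq G$. Moreover, every non-normal cyclic $\erz{g}$ contains $z$ as its unique subgroup of order $p$; for $p=2$ this means such $\erz{g}$ has order at least $4$ with $g^2 = z$, a key rigidity feature. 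For $p$ odd I then rule out non-abelian $G$ entirely: given $g$ with $\erz{g}$ non-normal (necessarily of order at least $p^2$) and $x \in G$ with $g^x \notin \erz{g}$, suitable products of $g$ and $g^x$ (using that the class-$2$ quotient of $\erz{g, g^x}$ modulo its center is well-behaved because $\Omega_1 \leq \Z$ and $p$ is odd) produce a non-normal cyclic subgroup of order $p$, contradicting $\Omega_1(G) \leq \Z(G)$. This leaves only case~(i) when $p$ is odd.

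The main obstacle is the $p = 2$ classification yielding cases~(ii)-(iv). Here $2$\nbd groups are much less rigid, many non-abelian examples have $\Omega_1(G) \leq \Z(G)$, and the constraint that every non-normal cyclic has order $\geq 4$ and squares to $z$ only partially pins down the structure. I would follow Blackburn's strategy and split according to whether $G$ possesses an abelian subgroup of index~$2$: in that case, an inverting element of order~$4$ built from any non-normal cyclic produces the generalized dicyclic structure of case~(ii). Otherwise, I would decompose $G$ along pairwise-commuting $Q_8$\nbd subgroups all sharing the single central involution $z$, and show that at most two such $Q_8$\nbd subgroups can coexist (otherwise one finds non-normal cyclics intersecting only in the identity), with the remaining non-central involutions forming a central $(C_2)^r$ direct factor. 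This yields cases~(iii) and~(iv). The delicate part is ruling out intermediate configurations such as $C_4 \times C_4 \times Q_8$, $Q_8 \times Q_8 \times Q_8$, or dicyclic groups of order $\geq 16$ appearing alongside $Q_8$ as a direct factor; verifying that each of these admits two non-normal cyclic subgroups with trivial intersection is the heart of Blackburn's original case analysis and the step where I would expect the bulk of the technical work.
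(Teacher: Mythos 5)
The paper itself offers no proof of this statement: it is quoted verbatim from Blackburn \cite{Blackburn66} (as ``Theorem~1'' there), so your argument has to stand on its own, and as written it does not. The preliminary reductions are sound: the Dedekind case follows from Theorem~\ref{t:dedekind}; in the non-Dedekind case every subgroup of order $p$ is normal (otherwise it would coincide with the characteristic subgroup $\R(G)$), hence central, so every non-normal cyclic subgroup has order at least $p^2$ and contains the element $z$ of order $p$ in $\R(G)$. Beyond that, however, there are genuine gaps. For odd $p$, the decisive step -- that a non-abelian, non-Dedekind $p$-group forces a non-normal subgroup of order $p$ -- is only gestured at (``suitable products of $g$ and $g^x$''); no argument is given, and it is not routine. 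For $p=2$, which is the entire content of the theorem, you explicitly postpone the case analysis that separates (ii), (iii), (iv) and excludes everything else, calling it ``the bulk of the technical work''; so the heart of the classification is simply not proved. Also, your claim that every non-normal cyclic $\erz{g}$ satisfies $g^2=z$ does not follow from ``$\erz{z}$ is its unique subgroup of order $2$'': you must first exclude non-normal cyclic subgroups of order $\geq 8$, which again requires an argument.

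Moreover, the one structural step you do commit to in the $2$-group case is incorrect. You propose the dichotomy: if $G$ has an abelian subgroup of index $2$, an inverting element of order $4$ yields the generalized dicyclic case (ii); otherwise one lands in (iii) or (iv). But $G=C_4\times Q_8$ (case (iii) with $r=0$) contains the abelian subgroup $C_4\times C_4$ of index $2$ and has non-normal cyclic subgroups of order $4$, yet it is \emph{not} generalized dicyclic: if $A$ were abelian of index $2$ and inverted elementwise by some $y\notin A$, every element of $A\cap\Z(G)$ would be both fixed and inverted by $y$, hence of order at most $2$; since $\Z(G)\iso C_4\times C_2$ has only four such elements while $\card{A\cap\Z(G)}\geq 4$, the central elements of order $4$ lie outside $A$, so $y=au$ with $a\in A$ and $u$ central; then conjugation by $y$ is trivial on $A$, forcing $A$ to be elementary abelian of order $16$, impossible because $C_4\times Q_8$ has only three involutions. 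So the existence of an abelian index-$2$ subgroup cannot separate case (ii) from case (iii), and that branch of your plan breaks down exactly where the real work begins. Since the paper uses Blackburn's theorem as an external input, the appropriate course is either to cite \cite{Blackburn66}, as the paper does, or to carry out in full the case analysis you have deferred.
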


Using Theorem~\ref{t:blackburn1},
it is rather straightforward to determine all finite groups
$G$ with $\R(G)\neq 1$, but a rather long list 
emerges~\cite[Theorem~2]{Blackburn66}.
However, due to Lemma~\ref{l:r_not2},
 we do not need to go through the longer list of
finite groups with $\R(G)\neq 1$.

\begin{proof}[Proof of Theorem~\ref{main:reals}, ``only if'']
  Suppose that $\NKer_{\reals}(G)\neq 1$.
  If $\NKer_{\reals}(G) = G$, then
  $G$ is abelian or $G\iso Q_8 \times (C_2)^r$,
  by Theorem~\ref{t:local_dpdivrings}.
  
  If $1 < \NKer_{\reals}(G) <G$ and $G$ is not a $2$-group, 
  then $G$ is generalized dicyclic, by Lemma~\ref{l:r_not2}.
  If $G$ is a $2$-group, then it follows from
  Blackburn's classification of $2$-groups
  with $\R(G)\neq 1$ (Theorem~\ref{t:blackburn1}) 
  that $G$ appears on the list in Theorem~\ref{main:reals}.
\end{proof}

\section{Classification over the rational numbers}
\label{sec:classq}

In this section, we prove Theorem~\ref{main:qclass}.
Throughout, we write
$\NKer(G):=\NKer_{\rats}(G) $.

First, we show the ``if''-part of 
Theorem~\ref{main:qclass}.
We collect the necessary information in the following theorem:

\begin{thm}\label{t:qclass_if}
    Let $G$ be a finite group.
    \begin{enumthm}
    \item \label{it:mi_ab} 
       When $G$ is abelian, then $\NKer(G)=G$.
    \item \label{it:mi_nilp}
       Suppose $G = S \times A$,
       where $S$ is a $2$-group of exponent $4$
       which appears on the list from Theorem~\ref{main:reals},
       the group $A$ is abelian of odd order,
       and the multiplicative order of\/ $2$ modulo $\card{A}$ is odd.
       Then $1\neq  \NKer_{\reals}(S) \leq \NKer(G) $.
    \item \label{it:mi_gendic}%
       When $G$ is generalized dicyclic,
       then $\{1\}\neq \NKer_{\reals}(G) \leq \NKer_{\rats}(G)$.
    \item \label{it:mi_allab}
       Let $G =(PQ)\times B$ be as in 
      Theorem~\ref{main:qclass}~\ref{it:m_allab}.
      Then $1 \neq \NKer(G) \leq \erz{g^{p^c}} < \erz{g}$,
      with $g\in P$ and $c\geq 1$ as in Lemma~\ref{main:pq}.  
    \item \label{it:mi_dirprod_h}%
       Let $G = Q_8 \times (C_2)^r \times H$ 
       with $H$ a group of odd order,
       such that the multiplicative order of\/ $2$ in
       $(\ints/\card{H})^*$     
       is odd.
       Then $\NKer(H) \leq \NKer(G)$.
    \end{enumthm}
\end{thm}

We remark that in Case~\ref{it:mi_gendic},
we actually have $\NKer_{\reals}(G)=\NKer(G)$,
and in Case~\ref{it:mi_nilp},
we have $\NKer(G) =G$ when $\NKer_{\reals}(S)=S$,
and $\NKer(G) = \NKer_{\reals}(S)$ else.
This is not too difficult to see, but as we do not need this,
we omit the proofs.
In the situation of Theorem~\ref{t:qclass_if}\ref{it:mi_dirprod_h},
it follows from Lemma~\ref{l:subgroupskernel}
that $\NKer(G) = \NKer(H)$ whenever $\NKer(H) < H$.
In the situation of \ref{it:mi_allab},
suppose that $s\geq c$ is such that the factor group $G/\erz{g^{p^s}}$
still fulfills the various conditions in Theorem~D~\ref{it:m_allab}.
Then we must have $g^{p^{s-1}} \in \NKer(G)$ by \ref{it:mi_allab} above.
In fact, one can show that $\NKer(G)= \erz{g^{p^{s-1}}}$,
where $s$ is the smallest number such that
$G/\erz{g^{p^s}}$ is as in Theorem~\ref{main:qclass}~\ref{it:m_allab}.

Notice that Theorem~\ref{t:qclass_if}~\ref{it:mi_allab}
contains Lemma~\ref{main:pq} from the introduction.
Theorem~\ref{t:qclass_if} shows in particular that
$ \NKer(G) \neq 1 $ 
for all the groups occurring in Theorem~\ref{main:qclass}.

\begin{proof}[Proof of 
     Theorem~\ref{t:qclass_if},
     \ref{it:mi_ab}--\ref{it:mi_gendic} and \ref{it:mi_dirprod_h}]
 Part~\ref{it:mi_ab} is trivial,
 and \ref{it:mi_gendic} follows from
   Lemma~\ref{l:gendickern}
   and Lemma~\ref{l:subfields}.

   Now assume the situation of \ref{it:mi_nilp}.
   By Theorem~\ref{main:reals},
   we have $\NKer_{\reals}(S)\neq 1$.
   Pick 
   $\chi\in \Irr(G)$ with
   $\NKer_{\reals}(S) \not\subseteq \Ker(\chi)$.
   We have to show that $\chi$ is skew-linear over $\rats$.
   We can write $\chi= \tau \times \lambda$
   with $\tau\in \Irr(S)$ and $\lambda\in \Lin(A)$.
   Since $\NKer_{\reals}(S)\not\subseteq \Ker(\tau)$,
   we either have $\tau(1)=1$,
   or $\tau(1)=2$ and $\FS(\tau)=-1$.
   When $\tau(1)=1$, then $\chi$ is linear.
   In the second case, 
   $\reals S e_{\tau}$ is isomorphic to the quaternions over the reals.
   Since $S$ has exponent~$4$,
   we have that $\rats(\tau) = \rats$ 
   and $\rats S e_{\tau} \iso \quats$,
   the rational quaternions.
   Thus $\rats G e_{\chi} \iso \quats \otimes_{\rats} \rats(\lambda)$.
   By assumption and by Lemma~\ref{l:split_quat}\ref{it:qu_cyc},
   the latter is a division ring.
   Thus $\chi$ is skew-linear over $\rats$ as required.   
   
   Finally, let $G$ be as in \ref{it:mi_dirprod_h},
   and write $S= Q_8 \times (C_2)^r$.
      Let $\chi\in \Irr(G)$ be such that
      $\NKer(H) \not\subseteq \Ker(\chi)$.
      We have to show that $\chi$ is skew-linear over $\rats$.
      We can write $\chi = \sigma\times \theta$
      with $\sigma\in \Irr(S)$ and $\theta\in \Irr(H)$.
      As $\NKer(H)\not\subseteq \Ker(\theta)$,
      the character~$\theta$ is skew-linear over $\rats$.
        Let $D$ be the block ideal of $\rats H$ corresponding to $\theta$.
        This is a division ring with center isomorphic to $\rats(\theta)$.
        If $\sigma$ is linear,
        then the block ideal corresponding to 
        $\chi = \sigma \times \theta$
        is again isomorphic to $D$.
        If $\sigma$ is nonlinear,
        then the block ideal  corresponding to $\chi$
        is isomorphic to
        \[ (\quats \otimes_{\rats} \rats(\theta))
            \otimes_{\rats(\theta)}
            D,
        \]
        by Lemma~\ref{l:blocksdirprod}.
        The first factor is a division ring by 
        Lemma~\ref{l:split_quat}\ref{it:qu_cyc}.
        Therefore, the tensor product is also a division ring,
        since both factors are division rings,
        and the first has dimension $4$ over its center,
        while $D$ has odd dimension~$\theta(1)^2$ over its center.
        Thus $\chi$ is skew-linear.
        This shows that
        $\NKer(H) \leq \NKer(G)$
        as required.        
\end{proof}

For the proof of Theorem~\ref{t:qclass_if}~\ref{it:mi_allab},
we need some lemmas first.
Recall that $m_q(\chi)$ denotes the Schur index
over $\rats_q$, the field of $q$-adic numbers.

\begin{lemma}\label{l:localsi_abbg}
  Let $G$ be a finite group and $q$ a prime.
  Suppose that $G$ 
  has a normal abelian Sylow $q$-subgroup $Q$,
  such that every subgroup of $Q$ is normal in $G$.
  Let $H$ be a $q$-complement in $G$.
  If $\chi\in \Irr(G)$,
  then 
  $m_q(\chi) = \card{\rats_q(\chi,\theta) : \rats_q(\chi)}$
  for any irreducible constituent $\theta\in \Irr(H)$
  of $\chi_H$.
\end{lemma}

   Notice that the $q$-complement $H$ exists by 
   the Schur-Zassenhaus theorem.
   We allow $Q=1$ and $H=G$ in this lemma.
   
\begin{proof}[Proof of Lemma~\ref{l:localsi_abbg}]  
  Let $\lambda\in \Lin(Q)$ be a constituent of $\chi_Q$.
  Then $K=\Ker(\lambda)$ is normal in $G$
  by assumption and thus
  $K\subseteq \Ker(\chi)$.
  We may factor out $K$ and assume without loss of generality
  that $K=1$.
  
  This means that $Q$ is cyclic and thus $\chi$ is in a 
  $q$-block with cyclic defect group.
  Thus we can apply Benard's theorem~\cite{Benard76}
   to $\chi$
   and conclude that
  $m_q(\chi) = \card{\rats_q(\chi,\phi):\rats_q(\chi)}$
  for any irreducible Brauer constituent $\phi$ of $\chi$.
  But an irreducible Brauer character of $G$ contains 
  the normal $q$-subgroup
  $Q$ in its kernel, and thus can be identified
  with an ordinary character of the $q'$-group 
  $H \iso G/Q$.
  Thus if $\phi$ is an irreducible Brauer constituent of $\chi$,
  then $\phi_H=\theta\in \Irr(H)$ is an
  irreducible constituent of $\chi$,
  and the result follows from Benard's theorem.
\end{proof}

The next observation was already used in Blackburn's classification.

\begin{lemma}\label{l:specialauts}
  Let $Q$ be a finite abelian $q$-group ($q$ prime)
  and let $P$ be some finite group with
  $(\card{P}, \card{Q} ) = 1$.
  Suppose that $P$ acts on $Q$ by automorphisms
  such that
  every subgroup of $Q$ is $P$-invariant.
  Then $P/\C_P(Q)$ is cyclic of order dividing $q-1$,
  and $\C_P(x) = \C_P(Q)= P_{\lambda}$ for every $1\neq x\in Q$
  and $1_Q\neq \lambda \in \Lin Q$.
\end{lemma}
\begin{proof}
  Take $x\in Q$ of maximal order and $u\in P$.
  Since $x^u \in \erz{x}$ by assumption,
  we have $x^u = x^k$ for some $k \in \nats$.
  If $y\in Q$ with $\erz{x}\cap \erz{y} = 1$,
  then $y^u = y^k$, since $u$ maps 
  $\erz{y}$ and $\erz{xy}$ to itself.
  It follows that $y^u = y^k $ for all $y\in Q$.
  
  Therefore, $P/\C_P(Q)$ is isomorphic to a $q'$-subgroup
  of $\Aut(\erz{x})$, and thus is cyclic of order dividing
  $q-1$.
  
  Finally, suppose $1 \neq x\in Q$ and
  $x^u = x$ for some $u\in P$.
  As we have just seen, 
  there is $k\in \nats$ such that
  $y^u = y^k$ for all $y\in Q$.
  It follows that $k\equiv 1 \mod q$
  (as $q \mid \ord(x)$). 
  Since $\card{P/\C_P(Q)}$ divides $q-1$, 
  it follows that
  $k^{q-1} \equiv 1 \mod q^n$,
  where $q^n$ is the exponent of $Q$.
  But this yields that $k\equiv 1 \mod q^n$ and thus
  $u\in \C_P(Q)$ as claimed.
  The proof for $\lambda\in \Lin Q$ is similar,
  using that there is $\ell$ such that
  $\mu^u = \mu^{\ell}$ for all $\mu \in \Lin Q$.
\end{proof}

Next, we compute some $q$-adic Schur indices for a class of groups
containing those in Theorem~\ref{main:qclass}~\ref{it:m_allab}.
This result will also be used later in the proof of the
``only if''-part.

\begin{lemma}\label{l:qsi_allab}
  Let $p\neq q$ be primes, let
  $P$ be an abelian $p$-group
  and $Q$ an abelian $q$-group.
  Suppose $P$ acts on $Q$ by automorphisms such that
  every subgroup of $Q$ is $P$-invariant, 
  and set $C= \C_P(Q)$.
  Let $B$ be an abelian group with 
  $(pq, \card{B})=1$
  and let $G=(PQ)\times B$.  
  Then any nonlinear $\chi\in \Irr(G)$ has the form
  $\chi = (\mu \times \lambda)^G$ for some
  $\mu\in \Lin(CB)$ and $1\neq \lambda\in \Lin(Q)$.
  Let $\theta\in \Lin( PB \mid \mu)$. 
  Then $m_q(\chi) = \ell/k$,
  where $\ell$ is the smallest positive integer
  such that $\ord(\theta)$ divides $q^{\ell}-1$,
  and $k$ is the smallest
    positive integer such that $\ord(\mu)$ divides $q^k-1$.
\end{lemma}

(In other words, $\ell$ and $k$ are the multiplicative orders
of $q$ modulo $\ord(\theta)$ and modulo $\ord(\mu)$, 
respectively.)

\begin{proof}[Proof of Lemma~\ref{l:qsi_allab}]
  Notice that $CB \leq \Z(G)$, and that
  $H = PB$ is an abelian $q$-complement of $G$.
  We are in the situation of Lemma~\ref{l:localsi_abbg}.
  Let $\chi\in \Irr(G)$.  
  If $Q\leq \Ker(\chi)$, then $\chi$ is linear,
  since $G/Q = P\times B$ is abelian.
  (In fact, $Q=G'$ when $C<P$.)
  Otherwise, let $\lambda \neq 1$ be a linear constituent of $\chi_Q$.
  By Lemma~\ref{l:specialauts},
  we have $G_{\lambda} = CBQ $.
  By Clifford theory,
  $\chi$ is induced from some linear character of 
  the abelian group $CBQ$,
  say $\chi = (\mu \times \lambda)^G$ with
  $\mu \in \Lin(CB)$.
  It follows that $\rats_q(\chi) = \crp{K}(\mu)$, where
  $\crp{K}\subseteq \rats_q(\lambda)$ is totally ramified over
  $\rats_q$,
  and the extension $\crp{K}(\mu)/\crp{K}$ is unramified.
  By the general form of unramified extensions,
  the residue field of $\rats_q(\chi)=\crp{K}(\mu)$ has order $q^k$, 
  where $k$ is the smallest
  positive integer such that $\ord(\mu)$ divides $q^k-1$.
  
  The restriction $\chi_H$ to the $q$-complement $H=PB$
  is the sum of all linear characters
  $\theta \in \Lin(H)$ lying over $\mu$.
  Thus $\rats_q(\chi,\theta)= \crp{K}(\theta)$ 
  is generated by $\rats_q(\chi)$ and 
  a root of unity of order $\ord(\theta)$.
  Since $\ord(\theta)$ is not divisible by $q$,
  the extensions
  $\rats_q(\chi,\theta)/\rats_q(\chi)$ and 
  $\crp{K}(\theta)/\crp{K}$ are unramified.
  We can thus compute 
  $\card{\rats_q(\chi,\theta) : \rats_q(\chi)}$
  by computing the degrees of the residue fields.
  As above, the residue field of 
  $\rats_q(\chi,\theta) = \crp{K}(\theta)$ has order
  $q^{\ell}$, where $\ell$ is the smallest positive integer
  such that $\ord{\theta}$ divides $q^{\ell}-1$.
  Now the result follows from Lemma~\ref{l:localsi_abbg}.
\end{proof}

\begin{lemma}\label{l:allab_suff}
  Let $G =(PQ)\times B$ be as in 
  Theorem~\ref{main:qclass}~\ref{it:m_allab}.
  Then $\NKer_{\rats_q}(G)\cap \erz{g}\neq 1$,
  with $g\in P$ as in Lemma~\ref{main:pq}.  
\end{lemma}
\begin{proof}
  Recall that $P= \erz{g}\times P_0$,
  where $g$ has order $p^{c+d}$
  and $C:=\C_P(Q)= \erz{g^{p^c}}\times P_0$.
  Let $z\in \erz{g^{p^c}}$ be an element of order $p$.
  We claim that $z\in \NKer_{\rats_q}(G)$.
  
  Suppose that $\chi(z)\neq\chi(1)$
  for $\chi\in \Irr G$.
  If $\chi(1)> 1$, then
  $\chi = (\mu \times \lambda)^G$ as in 
  Lemma~\ref{l:qsi_allab}, with
  $\mu\in \Lin(CB)$ and $1\neq \lambda\in \Lin Q$.
  
  To compute the Schur index of such a $\chi$,
  we apply Lemma~\ref{l:qsi_allab}.
  Since $\mu(z)\neq 1$, it follows that
  $\ord(\mu) = p^d n$, where $n$ divides $\card{B}$.
  For $\theta \in \Lin(PB\mid \mu)$, we have
  $\ord(\theta) = p^c \ord(\mu) = p^{c+d}n$.
  Let $k$ and $\ell$ be the multiplicative order of
  $q$ modulo $p^d n$ and $p^{c+d}n$, respectively.
  We have to show that $\ell/k = p^c$.
  
  Let $p^e = (q-1)_p$ and write $q = 1 + p^ex$ with $(p,x)=1$.
  Then $q^p \equiv 1 + p^{e+1}x \not\equiv 1\mod p^{e+2}$
  except when $p^e = 2$.
  By the assumptions in Lemma~\ref{main:pq}, 
  $p\leq p^c \leq p^e \leq p^d$.
  It follows that 
  $(q^{k_0} -1)_p = p^d$
  for $k_0 = p^{d-e}$ when $p^e>2$.
  When $p^e=2$, the extra condition in Lemma~\ref{main:pq}
  also ensures that 
  $(q^{k_0}-1)_p = p^d$,
  when $k_0$ is the multiplicative order of $q$ modulo $p^d$.
  Additionally,
  the assumption  in Theorem~\ref{main:qclass}~\ref{it:m_allab}
  on $\card{B}$
  yields that $k/k_0$ is not divisible by $p$.
  Thus $q^k = 1 + p^dx$ for some $x\in \ints$
  with $(p,x)=1$, and $p^d>2$.
  It follows $\ell = p^c k$
  and thus $m_q(\chi) = p^c = \chi(1)$.
  This was to be shown.
\end{proof}

\begin{proof}[Proof of Theorem~\ref{t:qclass_if}~\ref{it:mi_allab}]
   As $\NKer_{\rats_q}(G) \leq \NKer(G)$ for any group,
   we see that $\NKer(G)\cap \erz{g} \neq 1$.
   On the other hand, we have that
   $\R(G) \leq \erz{g^{p^c}}$
   (because $ \erz{ g^{ p^{c-1} } } $ is not normal in $PQ$),
   and thus $\NKer(G) \leq \erz{g^{p^c}}$
   by Lemma~\ref{l:many_normals}.   
\end{proof}

This finishes the proof of the ``if''-part of
Theorem~\ref{main:qclass}.

Next, we prove Theorem~\ref{main:qclass}
for nilpotent groups.

\begin{thm}\label{t:qclass_nilp}
  Let $G$ be nilpotent. 
  Then 
  $\NKer(G)\neq 1$
  if and only if
  one of the following holds.
  \begin{enumthm}
  \item $G$ is abelian and $G \neq \{1\}$.
  \item \label{it:mn_nilp}%
        $G = S \times A$,
        where $S\in \Syl_2(G)$ is a nonabelian group
        from the list in Theorem~\ref{t:blackburn1}
        and has exponent $4$,
        and $A$ is abelian of odd order,
        and the multiplicative order of $2$ in 
        $(\ints/\card{A})^*$ is odd.
  \item \label{it:mn_gendic}%
        $G$ is a generalized dicyclic $2$-group.
  \end{enumthm} 
\end{thm}
\begin{proof}
  The ``if''-part has been shown in Theorem~\ref{t:qclass_if},
  so assume $\NKer(G)\neq 1$.
  Then $1\neq \NKer(P) \leq \R(P)$ 
  for every nontrivial subgroup~$P \leq G$.
  By Theorem~\ref{t:blackburn1},
  the Sylow subgroups of odd order are all abelian.
  Thus we have $G= S\times A$ with $S\in \Syl_2(G)$ and
  $A$ abelian of odd order,
  and $S$ appears on the list from Theorem~\ref{t:blackburn1}.
  We may assume that $S$ is nonabelian.
  
  If $S$ has exponent~$4$, 
  then the nonlinear, but skew-linear characters
  of $S$ yield the quaternions over $\rats$ as block ideal 
  of the rational group algebra $\rats S$,
  and it follows from 
  Lemma~\ref{l:split_quat}\ref{it:qu_cyc}
  and Lemma~\ref{l:blocksdirprod}
  that \ref{it:mn_nilp} holds.
  
  If $S$ contains elements of order $8$ or greater,
  then $S$ is generalized dicyclic,
  and $\NKer(S) < S$ is cyclic of $2$-power order.
  It remains to show that $A=1$ in this case.
  Let $z\in \NKer(S)$ be the element of order~$2$.
  There is a $\sigma\in \Irr S$ such that
  $S/\Ker(\sigma) $ is a dicyclic (=generalized quaternion)
  group of order at least $16$,
  and $z\not\in \Ker(\sigma)$.
  Then $\rats(\sigma)$ contains $\sqrt{2}$.
  As $S/\Ker(\sigma)$ has a subgroup of order~$8$
  isomorphic to the quaternion group,
  the block ideal of $\rats S$ corresponding to $\sigma$  
  is isomorphic to the quaternions over
  a field containing $\sqrt{2}$.

  Let $\lambda \in \Lin A$
  and $\chi = \sigma \times \lambda$.
  The block ideal corresponding to $\chi$ is isomorphic to
  \[  (\quats \otimes_{\rats} \rats(\sigma))\otimes_{\rats(\sigma)}
          \rats(\sigma,\lambda)
      \iso \quats \otimes \rats(\sigma,\lambda),
  \] 
  by Lemma~\ref{l:blocksdirprod}.
   By Lemma~\ref{l:split_quat}\ref{it:qu_qu},
  $\quats \otimes \rats(\sqrt{2},\lambda)$ 
  can be a division ring only when $\lambda=1_A$.
  Since $\lambda \in \Lin A$ was arbitrary,
  we have $A=1$ as required.
\end{proof}

The proof of Theorem~\ref{main:qclass} (``only if'') for 
nonnilpotent groups will be divided into a number of lemmas.
Recall that a \defemph{Blackburn group} 
is a finite group $G$ such that $\R(G)\neq 1$.
By Lemma~\ref{l:many_normals},
a group with $\NKer(G)\neq 1$ is a Blackburn group.
For later reference, we record the following observation
(which is part of the argument used by Blackburn to classify
these groups):

\begin{lemma}\label{l:bb_pnilp}
  Let $G$ be a Blackburn group and 
  $p$ a prime dividing $\card{\R(G)}$.
  Then $G$ has a normal $p$-complement $A$
  such that every subgroup of $A$ is normal in $G$.
  If $A$ is nonabelian, then 
  $G = Q_8 \times (C_2)^r \times H$,
  where $H$ is a Blackburn group of odd order.
\end{lemma}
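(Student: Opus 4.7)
The plan is to proceed in two stages, one for each assertion of the lemma.

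For the first assertion, I exploit the defining property of $\R(G)$: any subgroup not containing $\R(G)$ must be normal in $G$. If $U \leq G$ has order coprime to $p$, then $U$ contains no element of order $p$, whereas $\R(G)$ does (since $p \mid \card{\R(G)}$); hence $\R(G)\not\subseteq U$, forcing $U\nteq G$. Applying this to each Sylow $q$\nbd subgroup with $q\neq p$ yields normal Sylow $q$-subgroups, whose product is a normal $p$-complement~$A$. Every subgroup of $A$ is a $p'$-subgroup of $G$, so by the same argument it is normal in~$G$.

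For the second assertion, assume $A$ is nonabelian. Then $A$ itself is a Dedekind group, so by Theorem~\ref{t:dedekind} we may write $A = Q_8 \times E \times B$, where $E\iso (C_2)^r$ and $B$ is abelian of odd order; in particular $p$ is odd. Let $P\in \Syl_p(G)$, so $G=PA$. I claim that $P$ centralizes the Sylow $2$-subgroup $S := Q_8 \times E$ of $G$. Indeed, $P$ acts by conjugation on $Q_8$ preserving every subgroup (as those are normal in $G$); a direct check shows that the automorphisms of $Q_8$ fixing every subgroup are precisely the inner ones, forming a group of order $4$, so the odd-order group $P$ acts trivially on $Q_8$. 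Similarly, every order-two subgroup of $E$ is $G$-normal, hence $P$-invariant; as $\Aut(C_2)=1$, every element of $E$ is fixed by $P$.

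Now $B\nteq G$ (being characteristic in $A$ as its odd part), so $H := PB$ is a subgroup of odd order, and it centralizes $S$ (since $P$ does by the previous step, and $B$ does as a direct factor of $A$ distinct from $S$). As $\gcd(\card{H},\card{S})=1$ we have $H\cap S = 1$, and a comparison of orders gives $G = S \cdot H = Q_8 \times E \times H$. To verify that $H$ is itself Blackburn, choose $g\in \R(G)$ of order $p$, which necessarily lies in $H$ (the odd part of $G$). For any $U\leq H$, the decomposition $G = S \times H$ yields $N_G(U) = S \times N_H(U)$, so $U\nteq G$ if and only if $U\nteq H$; thus every nonnormal subgroup of $H$ is also nonnormal in $G$ and therefore contains~$g$, giving $1\neq g\in \R(H)$.

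The delicate point is the identification of the automorphisms of $Q_8$ that fix every subgroup --- once this is in place, everything else reduces to coprime-order arguments and the straightforward direct-product decomposition of~$G$.
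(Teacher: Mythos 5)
Your proof is correct and follows essentially the same route as the paper: normality of all $p'$-subgroups from the defining property of $\R(G)$, the Dedekind classification of $A$, the observation that an odd-order group preserving every subgroup of $Q_8\times(C_2)^r$ must centralize it, the resulting decomposition $G=S\times H$, and the transfer of nonnormality from $H$ to $G$ to get $\R(H)\neq 1$. The only difference is cosmetic: you build the $2$-complement explicitly as $H=PB$ and spell out the automorphism computation that the paper dismisses as ``easy to see'' (citing Blackburn), which is fine.
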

\begin{proof}
  By definition of $\R(G)$, all the Sylow $q$-subgroups for
  $q\neq p$ are normal in $G$,
  and thus generate a normal $p$-complement, $A$.
  By definition of $\R(G)$, it follows also that every
  subgroup of $A$ is normal in $G$.
  
  In particular, $A$ is a Dedekind group.
  If $A$ is nonabelian, then
  $S\in \Syl_2(G)$ is isomorphic to
  $Q_8\times (C_2)^r$, by Theorem~\ref{t:dedekind}.
  As $S\nteq G$, there is a $2$-complement $H$.
  Since every subgroup of $S$ is normal in $G$,
  it is easy to see that $H$ centralizes $S$
  and thus $G = S \times H$
  (this is also shown 
  in~\cite[Proof of Theorem~2(e)]{Blackburn66}).
  Any nonnormal subgroup of $H$ is nonnormal in $G$
  and thus $ 1 \neq \R(G)\leq \R(H)$.
\end{proof}

Thus $A$ is a Dedekind group
and $G = PA$ for any $P\in \Syl_p(G)$.
The classification of Blackburn groups can now be obtained
by considering the different possibilities for $P$ and $A$
(using the fact that $P$ is also a Blackburn group and
 Theorem~\ref{t:blackburn1} for $P$, 
 and Theorem~\ref{t:dedekind} for $A$).
However, in our proof of Theorem~\ref{main:qclass},
we do not have to consider all the cases of Blackburn's
classification separately.
The next result will be used
to reduce to the case that
$A$ is abelian.

\begin{lemma}\label{l:qclass_q8c2h}  
  Let $G = Q_8 \times (C_2)^r \times H$ with
  $H \neq 1$ of odd order,
  and suppose $\NKer(G)\neq 1$.
  Then $\NKer(H)\neq 1$ and 
  the multiplicative order of\/ $2$ in $(\ints/\card{H})^*$
  is odd.
\end{lemma}
\begin{proof}
  When $\NKer(H) =H$ 
  then $H$ is abelian and $G$ is a Dedekind group,
  and Corollary~\ref{c:dede_nker} and Theorem~\ref{t:qg_divringprod}
  yield the result.
  When $\NKer(H)  < H$, then 
  $1\neq \NKer(G) \leq \NKer(H) $
  by Lemma~\ref{l:subgroupskernel}.

  Let $z\in \NKer(G)$
  have prime order $p$, 
  and let $A$ be the abelian $p$-complement of $H$. 
  Then $\erz{z}\nteq G$ and so $\erz{z,A}=\erz{z}\times A$.
  Suppose $\lambda\in \Lin(\erz{z,A})$ has maximal possible order.
  (This implies $\lambda(z)\neq 1$, in particular.)
  Then any $\chi\in \Irr(H\mid \lambda)$ is skew-linear.
  For
  $\tau\in \Irr(Q_8)$ with $\tau(1)=2$,
  we must have that $\tau \times \chi$ is also skew-linear.
  Lemma~\ref{l:blocksdirprod} yields in particular,
  that $\rats(\chi)$ must not be a splitting field for $\quats$
  (the quaternions over $\rats$).

  Cleary,  $\rats(\chi)$, $ \rats(\lambda) \subseteq \rats(\eps)$,
  where $\eps$ is a primitive $\card{H}$-th root of unity.
  Since every prime dividing $\card{H}$ also divides
  $\ord(\lambda)$,
  we see that
  $\card{\rats(\eps):\rats(\lambda)}$ is odd.
  By Lemma~\ref{l:res_skewlin}, all constituents of $\chi_A$
  are Galois conjugate with $\lambda$ over $\rats(\chi)$.
  Thus $\card{\rats(\chi,\lambda):\rats(\chi)}$ divides $\chi(1)$
  and is odd. 
  It follows that $\card{\rats(\eps):\rats(\chi)}$ is odd as well.
  Thus $\rats(\chi)$ is a splitting field for the quaternions,
  if and only if $\rats(\eps)$ is a splitting field 
  for the quaternions.
  Now Lemma~\ref{l:split_quat}\ref{it:qu_cyc} yields that the condition
  on the order of $2 \mod \card{H}$ holds.
\end{proof}

\begin{lemma}\label{l:abbglocal}
  Let $G =PA$ be a Blackburn group with 
  normal abelian $p$-comple\-ment~$A$ and\/
  $\R(G) \leq P\in \Syl_p(G)$.
  Suppose that 
  $\chi\in \Irr(G)$ is skew-linear over $\rats_q$,
  where $q$ is a prime dividing $\card{A}$.
  Then $P$ centralizes every
  Sylow $r$-subgroup $R$ of $A$
  such that $r\neq q$ and $R\not\subseteq \Ker(\chi)$.
\end{lemma}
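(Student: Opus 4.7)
The plan is to exploit the coprimality between $q$ and $|PR|$, combined with the special structure of $P$-action on subgroups of $A$ in a Blackburn group, to reduce the statement to Lemma~\ref{l:specialauts}.

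First I would set $H = PR$. Since $R$ is an $r$-subgroup of the $p$-complement $A$ with $r\neq q$ and $p\neq q$, the order of $H$ is coprime to $q$. Since $\chi$ is skew-linear over $\rats_q$, i.e.\ $\chi(1) = m_q(\chi)$, Corollary~\ref{c:localindequ} applies and tells us that every irreducible constituent of $\chi_H$ is linear. Because $R\not\subseteq \Ker(\chi)$ and $R\leq H$, we have $R\not\subseteq \Ker(\chi_H) = \bigcap_\theta \Ker(\theta)$, so there is at least one linear constituent $\theta\in \Irr(H)$ of $\chi_H$ with $R\not\subseteq \Ker(\theta)$. Set $\mu := \theta|_R$, a nontrivial linear character of $R$.

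Next I would observe two facts about $\mu$. First, since $\theta$ is a homomorphism $H\to \compl^*$ and $\compl^*$ is abelian, $\mu(x^u) = \theta(u^{-1}xu) = \theta(x) = \mu(x)$ for all $u\in P$ and $x\in R$, so $\mu$ is $P$-invariant, that is, $P_\mu = P$. Second, since $G$ is Blackburn with $\R(G)\leq P$ and $A\cap P = 1$, no subgroup of $A$ can contain $\R(G)$; hence every subgroup of $A$ (and in particular of $R$) must be normal in $G$, thus $P$-invariant. Together with the coprimality $(|P|,|R|) = 1$ and the fact that $R$ is abelian, the hypotheses of Lemma~\ref{l:specialauts} are satisfied for the $P$-action on $R$.

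Lemma~\ref{l:specialauts} then yields $P_\mu = \C_P(R)$, and combining with $P_\mu = P$ gives $\C_P(R) = P$, i.e.\ $P$ centralizes $R$, as required. The argument is essentially a direct chain of invocations; there is no substantial obstacle, the only point that requires a small verification is that $\mu$ is automatically $P$-invariant, which follows purely from the linearity of~$\theta$.
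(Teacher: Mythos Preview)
Your proof is correct and follows essentially the same route as the paper's: both apply Corollary~\ref{c:localindequ} to the subgroup $H=PR$ and then combine with Lemma~\ref{l:specialauts} applied to a nontrivial linear constituent on $R$. The only cosmetic difference is that the paper phrases the key step via Clifford theory (the bound $\theta(1)\geq |P:\C_P(R)|$ together with $\theta(1)=1$), whereas you make the equivalent direct observation that a linear character of $H$ restricts to a $P$-invariant character of the normal subgroup~$R$.
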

\begin{proof}
  Let $r\neq q$ and $R\in \Syl_r(A)$,
  and assume that $R\not\subseteq \Ker(\chi)$.
  Let $\lambda\in \Lin(R)$ be a linear constituent
  of $\chi_R$, so that $\lambda\neq 1$.
  
  By Lemma~\ref{l:specialauts},    
  $\C_P(R) = \C_P(x)$ for any $1\neq x\in R$, and thus also
  \[\C_P(R) = P_{\lambda} := \{u\in P \mid \lambda^u = \lambda\}.
  \]
  
  Consider the subgroup $H=PR$, and choose
  a constituent $\theta\in \Irr(H)$ of $\chi_H$
  that lies over $\lambda$.
  Then $\theta = \psi^H$ for some
  $\psi\in \Irr(H_{\lambda})$,
  where $H_{\lambda} = \C_P(R) R$.
  Thus $\theta(1) \geq \card{P:\C_P(R)}$.  
  On the other hand, by Corollary~\ref{c:localindequ} we have
  that $\theta(1)=1$, and thus 
  $P=\C_P(R)$ as claimed.
\end{proof}

\begin{lemma}\label{l:abab_1}
  Let $G = PA$ be a finite group,
  where $A$ is a normal abelian $p$\nbd complement
  and $1 \neq \NKer(G)\leq P\in \Syl_p(G)$.
  Suppose that 
  $\card{P:\C_P(A)} > 2$.
  Then $P$ is abelian,
  and there is exactly one Sylow subgroup
  of $A$ which is not centralized by $P$.
\end{lemma}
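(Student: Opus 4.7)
Fix $n \in \NKer(G) \setminus \{1\}$. The plan is, for suitable $\lambda \in \Lin(A)$, to produce a skew-linear $\chi \in \Irr(G \mid \lambda)$ with $n \notin \Ker\chi$, and to extract information from Lemmas~\ref{l:abbglocal} and~\ref{l:res_skewlin}. For any $\lambda \in \Lin(A)$, the induced character $\lambda^G$ vanishes at $n$ (no $G$-conjugate of the $p$-element $n$ lies in the $p'$-group $A$), while $\lambda^G(1) = \card P \neq 0$; so some constituent $\chi$ of $\lambda^G$ has $n \notin \Ker\chi$, which by definition of $\NKer$ makes $\chi$ linear or skew-linear over $\rats$. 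By Itô's theorem $\chi(1)$ divides $\card P$ and is a $p$-power, and whenever $\chi(1) > 2$ the identity $m_\rats(\chi) = \operatorname{lcm}_s m_s(\chi)$ from Lemma~\ref{l:sifacts}, together with $m_\infty(\chi), m_p(\chi) \leq 2$, yields an odd prime $s \neq p$ dividing $\card A$ with $m_s(\chi) = \chi(1)$.

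To prove the uniqueness of the non-centralized Sylow subgroup, suppose two distinct primes $q_1, q_2$ give non-centralized $Q_{q_i}$; take $\lambda$ nontrivial on every non-centralized Sylow subgroup of $A$. By Lemma~\ref{l:specialauts}, $P_\lambda = \C_P(A)$, so $\chi(1) \geq [P:\C_P(A)] > 2$, and the preliminaries produce $s$ with $\chi$ skew-linear over $\rats_s$. Lemma~\ref{l:abbglocal} then forces $P$ to centralize every $Q_r$ with $r \neq s$ and $Q_r \not\leq \Ker\chi$; but $\lambda_{q_i} \neq 1$ gives $Q_{q_i} \not\leq \Ker\chi$, so choosing $i$ with $q_i \neq s$ contradicts that $Q_{q_i}$ is not centralized. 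Hence a unique Sylow subgroup $Q$ of $A$ is non-centralized, and $\C_P(A) = \C_P(Q)$.

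To prove $P$ is abelian, assume instead that it is generalized dicyclic (so $p = 2$), with $P_0 \leq P$ abelian of index $2$ and $g \in P \setminus P_0$ inverting $P_0$. A direct computation shows $[P,P] = P_0^2$, and since $P_0/P_0^2$ has exponent $2$ and $\bar g^4 = 1$ in $P/[P,P]$, we get $\exp(P/[P,P]) \leq 4$. Since $P/\C_P(A)$ is cyclic (Lemma~\ref{l:specialauts}), it is a cyclic quotient of $P/[P,P]$, so $[P:\C_P(A)] \leq 4$; combined with the hypothesis $[P:\C_P(A)] > 2$, equality holds. Take $\lambda$ nontrivial on $Q$; the construction yields a skew-linear $\chi$ with $\chi(1) = 4$, skew-linear over some $\rats_s$ for an odd $s \mid \card A$. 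Corollary~\ref{c:localindequ} then provides linear constituents in $\chi_P$. Since $G_\lambda = \C_P(A) \times A$ is normal in $G$, write $\chi = (\nu \times \lambda)^G$ with $\nu \in \Irr(\C_P(A))$; Mackey gives $\chi_P = \nu^P$, and the existence of linear constituents forces $\nu \in \Lin(\C_P(A))$ to be $P$-invariant, so its $[P:\C_P(A)] = 4$ distinct linear extensions to $P$ appear in $\chi_P$, each with multiplicity $1$. By Lemma~\ref{l:res_skewlin} these $4$ characters form a single Galois orbit over $\rats(\chi)$; but their orders divide $\exp(P/[P,P]) \leq 4$, so their values lie in $\rats(i)$, and the orbit size is at most $[\rats(i) : \rats(\chi)] \leq 2 < 4$, a contradiction.

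The main obstacle is the generalized dicyclic case in the proof that $P$ is abelian: Lemma~\ref{l:abbglocal} alone does not preclude it, and one must combine the structural bound $\exp(P/[P,P]) \leq 4$ for generalized dicyclic $P$ with the single-Galois-orbit condition from Lemma~\ref{l:res_skewlin} to force the contradiction.
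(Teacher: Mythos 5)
Your first two paragraphs essentially follow the paper's route: produce a constituent $\chi$ of $\lambda^G$ with $n\notin\Ker(\chi)$, use Itô plus Lemma~\ref{l:sifacts} to find an odd prime $s\neq p$ dividing $\card{A}$ with $m_s(\chi)=\chi(1)$, and feed this into Lemma~\ref{l:abbglocal}; that part is correct. (One small slip: for odd $p$, Lemma~\ref{l:sifacts}\ref{it:localsi_div} does not give $m_p(\chi)\leq 2$ but only $m_p(\chi)\mid p-1$; the conclusion you need still holds because $m_p(\chi)$ also divides the $p$-power $\chi(1)$, forcing $m_p(\chi)=1$.) The genuine gap is in the proof that $P$ is abelian: you assume, without justification, that the only nonabelian possibility is that $P$ is generalized dicyclic. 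This dichotomy is true but needs an argument: one must first note that $1\neq\NKer(G)\leq\R(G)\leq\R(P)$, so that $P$ is a Blackburn $p$-group and appears on the list of Theorem~\ref{t:blackburn1}, and then exclude $C_4\times Q_8\times(C_2)^r$ and $Q_8\times Q_8\times(C_2)^r$; for those groups, $P$ is generated by elements $u$ with $\erz{u}\cap\R(P)=1$, so each $\erz{u}$ is normal in $G$, whence $[\erz{u},A]\leq \erz{u}\cap A=1$ and $\C_P(A)=P$, contradicting $\card{P:\C_P(A)}>2$. Neither step is in your sketch, and the second is load-bearing: without it your contradiction argument simply does not cover all nonabelian $P$.

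Granting that reduction, your generalized dicyclic argument does work and is genuinely different from the paper's: the bound $\exp(P/[P,P])\leq 4$ forces $\card{P:\C_P(A)}=4$, Corollary~\ref{c:localindequ} makes $\chi_P$ a sum of four distinct linear extensions of $\nu$ with values in $\rats(i)$, and Lemma~\ref{l:res_skewlin} then demands a Galois orbit of size $4$ over $\rats(\chi)$, whereas $\card{\rats(\chi,\theta):\rats(\chi)}\leq\card{\rats(i):\rats}=2$ (your expression ``$[\rats(i):\rats(\chi)]$'' should be read this way). The paper avoids both Blackburn's classification and this orbit count by a simpler device: for an \emph{arbitrary} $\tau\in\Irr(P)$ with $z\notin\Ker(\tau)$, the computation $\ipcf{(\tau^G)_A}{\lambda}=\tau(1)>0$ produces a $\chi$ lying over both $\tau$ and $\lambda$; Corollary~\ref{c:localindequ} then forces $\tau$ to be linear, and Lemma~\ref{l:kernelnonlin} immediately yields that $P$ is abelian. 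Your construction of $\chi$ as some constituent of $\lambda^G$ with $n\notin\Ker(\chi)$ gives no control over which character of $P$ it lies above, which is exactly why you are driven into the structural detour.
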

\begin{proof}  
  Let $z\in \NKer(G) \subseteq P$ be an element of order $p$.
  Choose $\tau\in \Irr(P)$ with
  $z\notin \Ker(\tau)$.  
  For $\lambda\in \Lin(A)$ arbitrary, we have
  \[ \ipcf{ (\tau^G)_A }{ \lambda }_A
        = \ipcf{ (\tau_{P\cap A})^A }{ \lambda }_A
        = \ipcf{ \tau_{P\cap A} }{ \lambda_{P\cap A} }_{P\cap A}
        = \tau(1) > 0,
  \]
  as $P\cap A = 1$.
  Thus for any $\lambda \in \Lin(A)$, there is 
  $\chi\in \Irr(G)$ 
  such that $\ipcf{\chi}{\tau^G}> 0$ and
  $\ipcf{\chi_A}{\lambda}> 0$.
  We apply this to a $\lambda$ such that
  $\lambda_R \neq 1_R$ for each Sylow subgroup, $R$, of $A$.
  Thus there is a $\chi\in \Irr(G)$ lying over $\tau$
  and such that $\Ker(\chi)$
  contains no Sylow subgroup of $A$.  

  Notice that $\C_P(A) = P_{\lambda}$ for such a $\lambda$,
  by Lemma~\ref{l:specialauts}.
    As $\chi$ is induced from a character of $G_{\lambda}$,
    it follows that 
    $\chi(1) \geq \card{G:G_{\lambda}} = \card{P:\C_P(A)} > 2$.

  Because $z\not \in \Ker(\chi)$ and $z\in \NKer(G)$,
  it follows that $\chi$ is skew-linear over $\rats$
  and thus $m_{\rats}(\chi) = \chi(1)$. 
  By Ito's theorem~\cite[Theorem~6.15]{isaCTdov},
  $\chi(1)$ divides $\card{G:A}=\card{P}$
  and thus is a power of $p$.  
  It follows from Lemma~\ref{l:sifacts}\ref{it:globloc}
  that there is a prime $q$ (possibly infinite) 
  such that $m_q(\chi)= m_{\rats}(\chi) = \chi(1)$. 
  
  Since $\chi(1)\geq \card{P:\C_P(A)} > 2$,
  it follows from Lemma~\ref{l:sifacts}\ref{it:localsi_div}
  that the prime $q$, such that 
  $m_q(\chi)=\chi(1)$, must be a finite, odd prime 
  and $q\neq p$.
  It follows that $q$ divides $\card{A}$.
  Now Corollary~\ref{c:localindequ} yields that
  $\tau$ is linear.
  Since the only assumption on $\tau\in \Irr P$ was 
  that $z\not\in \Ker(\tau)$,
  Lemma~\ref{l:kernelnonlin} yields that $P$ is abelian.
  Lemma~\ref{l:abbglocal} yields that
  $P$ centralizes all Sylow subgroups of $A$
  except the Sylow $q$-subgroup. 
\end{proof}

\begin{lemma}\label{l:c2ab}
  Let $G = SA$ be a finite group,
  where $A$ is a normal (abelian) $2$\nbd complement 
  and $1 \neq \NKer(G) \leq S\in \Syl_2(G)$, and suppose
  $\card{S:\C_S(A)} = 2$.
  Then either $G$ is as in Lemma~\ref{l:abab_1}
  (with $p=2$ and $P=S$),
  or $G$ is generalized dicyclic.
\end{lemma}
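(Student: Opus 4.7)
The plan is to pick an involution $z\in\NKer(G)$ and use that $\erz{z}$ is characteristic in the normal subgroup $\NKer(G)$ with $\Aut(\erz{z})=1$ to place $z\in \Z(G)\cap S\cap \C_G(A)=\C_S(A)$. Fix $g\in S\setminus\C_S(A)$: by Lemma~\ref{l:specialauts} it inverts $A$, so $g^2\in \C_S(A)$. The argument immediately preceding the lemma (via Theorem~\ref{t:blackburn1}, ruling out $C_4\times Q_8\times(C_2)^r$ and $Q_8\times Q_8\times(C_2)^r$ because their generators would be forced into $\C_S(A)$) shows $S$ is abelian or generalized dicyclic; the proof splits along this dichotomy.

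If $S$ is nonabelian (hence generalized dicyclic), I will first show $\C_S(A)$ is abelian by adapting Lemma~\ref{l:abab_1} to the boundary case $\card{S:\C_S(A)}=2$. Pick a nonlinear $\tau\in\Irr S$ with $z\notin\Ker\tau$ (which exists by Lemma~\ref{l:kernelnonlin}); for any constituent $\sigma$ of $\tau_{\C_S(A)}$ and any $\lambda\in\Lin A$ with $\lambda^2\neq 1$ nontrivial on some non-centralized Sylow, the character $\chi=(\sigma\times\lambda)^G$ is skew-linear of degree $2\sigma(1)$, and $\sigma(1)\geq 2$ gives $\chi(1)\geq 4$, hence by Lemma~\ref{l:sifacts} an odd prime $q\mid\card{A}$ with $m_q(\chi)=\chi(1)$, after which Corollary~\ref{c:localindequ} forces $\tau$ linear, a contradiction. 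So $\tau_{\C_S(A)}$ is a sum of linear characters; applying the same scheme to a putative nonlinear $\rho\in\Irr\C_S(A)$ with $z\notin\Ker\rho$ leads to another contradiction, and Lemma~\ref{l:kernelnonlin} applied to $\C_S(A)$ delivers $\C_S(A)$ abelian. Since the only abelian subgroup of index~$2$ in a nonabelian generalized dicyclic $2$-group (up to the symmetry of $Q_8$) is the one from the defining structure, $g$ inverts $\C_S(A)$ and $g^2\neq 1$; then $g$ inverts $B=\C_S(A)\times A$, so $G$ is generalized dicyclic.

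Now suppose $S$ is abelian. If exactly one Sylow of $A$ is not centralized by $S$, the conclusion of Lemma~\ref{l:abab_1} holds; so assume two distinct primes $q_1,q_2$ have $A_{q_i}$ not centralized, and pick $\lambda=\lambda_1\lambda_2$ with $\lambda_i\in\Lin A_{q_i}$ nontrivial. For each $\sigma\in\Lin\C_S(A)$ with $\sigma(z)=-1$, the character $\chi=(\sigma\times\lambda)^G\in\Irr G$ has degree $2$, $z\notin\Ker\chi$, and is skew-linear. Lemma~\ref{l:abbglocal} applied at $q_1$ or $q_2$ rules out $m_q(\chi)=2$ for every odd $q$; and since $(sa)^2=s^2$ for $s\in S\setminus\C_S(A)$, the $2$-regular elements of $G$ form $A$, so the irreducible $2$-Brauer character indexed by the $G$-orbit $\{\lambda,\lambda^{-1}\}$ agrees with $\chi$ on $A$, giving $d_{\chi\phi}=1$ and $\rats_2(\phi)\subseteq\rats_2(\chi)$; Lemma~\ref{l:sifacts}\ref{it:localsi_brauer} then yields $m_2(\chi)=1$. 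Hence $m_\infty(\chi)=2$, i.e. $\chi$ is real with $\FS(\chi)=-1$. Since $S$ is abelian ($\sigma^g=\sigma$), a direct comparison of $\bar\chi$ with the two $G$-conjugates of $\sigma\times\lambda$ shows $\chi$ real $\Leftrightarrow \sigma^2=1$, and a Frobenius-Schur computation using $(gy)^2=g^2c^2$ for $y=ca$ then gives $\FS(\chi)=\sigma(g^2)$ under $\sigma^2=1$. Requiring these for \emph{every} $\sigma\in\Lin\C_S(A)$ with $\sigma(z)=-1$ forces, by a duality argument on $\Lin\C_S(A)$, that $\C_S(A)$ is elementary abelian and $g^2=z\neq 1$; then $B=\C_S(A)\times A$ is abelian of index $2$, $g$ inverts $B$, and $G$ is generalized dicyclic.

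The main obstacle will be the $S$-abelian case: obtaining $m_2(\chi)=1$ requires a clean identification of the $2$-regular classes of $G$ with $A$ and of the orbit Brauer character; carrying out the Frobenius-Schur indicator on $G\setminus B$ needs the explicit form of the induced character on that coset; and extracting "$\C_S(A)$ elementary abelian and $g^2=z$" from the conjunction of $\sigma^2=1$ and $\sigma(g^2)=-1$ over all admissible $\sigma$ calls for a careful dual analysis of $\Lin\C_S(A)$.
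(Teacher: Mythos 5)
There is a genuine gap, and it occurs at the same point in both of your branches: you never establish that $\C_A(g)=1$, i.e.\ that $g$ actually inverts all of $A$. Lemma~\ref{l:specialauts} only gives that $g$ inverts each \emph{non-centralized} Sylow subgroup of $A$; in general $A=[A,g]\times\C_A(g)$ and $g$ centralizes the second factor. The unique candidate for an abelian index-$2$ subgroup of $G$ is $\C_S(A)A$ (any index-$2$ subgroup contains $A$), so $G$ is generalized dicyclic only if $g$ inverts both $\C_S(A)$ and $A$ — and the latter fails whenever $\C_A(g)\neq 1$. Concretely, in your abelian-$S$ branch you take $\lambda=\lambda_1\lambda_2$ supported on the two non-centralized Sylows, so $\lambda^g=\overline{\lambda}$ and $m_\infty(\chi)=2$ is perfectly consistent; your computation then yields ``$C$ elementary abelian and $g^2=z$'' even in a hypothetical group with an additional nontrivial \emph{centralized} Sylow subgroup, where the conclusion ``generalized dicyclic'' would be false. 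The paper's device for excluding this is to choose $\lambda$ whose kernel contains \emph{no} Sylow subgroup of $A$: then $\lambda^t\notin\{\lambda,\overline{\lambda}\}$, so $m_\infty(\chi)=m_2(\chi)=1$, some odd $q$ must carry the whole Schur index, and Lemma~\ref{l:abbglocal} forces exactly one non-centralized Sylow (and, via Corollary~\ref{c:localindequ}, that $S$ is abelian). Without this step your case division is incomplete.

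The second gap is the assertion that ``the only abelian subgroup of index $2$ in a nonabelian generalized dicyclic $2$-group is the one from the defining structure.'' This is false: if the defining subgroup $D$ has an index-$2$ subgroup $E$ of exponent $2$, then $\erz{E,sd}$ is another abelian index-$2$ subgroup for suitable $sd\notin D$, and such a subgroup need not be inverted by outside elements (e.g.\ $D=\erz{a}\times\erz{b}\times\erz{e}\iso C_4\times C_2\times C_2$, $s^2=b$, $d^s=d^{-1}$: the subgroup $\erz{s,a^2,e}$ is abelian of index $2$ but $s^a=a^2s\neq s^{-1}$). So from ``$\C_S(A)$ is abelian of index $2$ in the generalized dicyclic group $S$'' you cannot conclude that $g$ inverts $\C_S(A)$. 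This is exactly why the paper runs the $D\neq C$ analysis: choosing $t\in D\setminus C$ and $s\in C\setminus D$, using $z\in\R(G)\leq\erz{t}$ (because $\erz{t}$ is not normal in $G$, as $t$ moves $A$) and $\R(S)=\erz{z}$, it derives $s^2=t^2=(st)^2=z$ and hence $S\iso Q_8\times(C_2)^r$, contradicting that $S$ is not Dedekind. Your first step (showing $\C_S(A)$ is abelian via a degree-$\geq 4$ induced character that cannot be skew-linear) does match the paper, but the two reductions above are the substance of the lemma and are missing.
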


(Notice that $A$ is automatically abelian here since
 $A$ is a Dedekind group of odd order.)
 
\begin{proof}[Proof of Lemma~\ref{l:c2ab}]
  First we show that $C:= \C_S(A)$ is abelian.
  Let $z\in \NKer(G)$ have order $2$.
  Then $z\in \Z(G)$ and thus $z\in C$.
  If $C$ is not abelian, there is $\tau\in \Irr(C)$
  with $\tau(z) \neq \tau(1) > 1$ (Lemma~\ref{l:kernelnonlin}).
  Let $t\in S\setminus C$, and
  let $\lambda\in \Lin(A)$ be such that
  $\lambda^t \neq \lambda$.
  Then $\chi:= (\tau \times \lambda)^G \in \Irr(G)$,
  and $\chi(1) = 2\tau(1) > 2$.
  By Lemma~\ref{l:sifacts}~\ref{it:localsi_div},
  $\chi$ can not be skew-linear over $\reals$
  or $\rats_2$.
  Since $\chi_C$ has a non-linear constituent, 
  $\chi$ can not be skew-linear over $\rats_q$ 
  for odd primes $q$, by Corollary~\ref{c:localindequ}.
  As $\chi(1) = 2^r$, it follows from 
  Lemma~\ref{l:sifacts}~\ref{it:globloc} that
  $m_{\rats}(\chi) <  \chi(1)$,
  and thus
  $z\notin \NKer(G)$, contradiction.
  Thus $C$ is abelian as claimed,
  and $G=SA$ has the abelian subgroup 
  $ CA = C \times A $ 
  of index~$2$.
  
  Fix $t\in S\setminus C$.
    Notice that 
    $A = [A,t] \times C_A(t)$.
    Since every subgroup of $A$ is normal in $G$, 
    the factors of this decomposition have coprime orders.
  Also, we have $[A,t] \neq 1$ by assumption,
  and $t$ inverts the elements in $[A,t]$.
  
  Consider first the case $\C_A(t)\neq 1$.
  Pick some $\lambda \in \Lin(A)$ such that 
  $\Ker(\lambda)$ contains no Sylow subgroup of $A$.
  Then $\lambda^t \notin \{\lambda, \overline{\lambda}\}$.
  Consider some $\nu \in \Lin(C)$ with $\nu(z) =-1$,
  where $z\in \NKer(G)$ has order $2$ as before,
  and set $\mu = \nu \times \lambda \in \Lin(CA)$.
  As $\mu^t \neq \mu$,
  we have $\chi = \mu^G \in \Irr(G)$.
  This $\chi$ remains irreducible modulo~$2$, and thus
  $m_2(\chi)=1$, by Lemma~\ref{l:sifacts}~\ref{it:localsi_brauer}.
  As $\mu^t \neq \overline{\mu}$, we have also
  $m_{\reals}(\chi)=1$.
  But as $z\notin \Ker(\chi)$,
  it follows that $m_q(\chi)=2$ for some odd prime
  $q$ dividing $\card{A}$. 
  Then Lemma~\ref{l:abbglocal} yields
  that $S$ centralizes every Sylow subgroup of
  $A$ except one.
  Also Corollary~\ref{c:localindequ}
  yields that $\chi_S$ is a sum of linear characters.
  It follows that $\nu^t = \nu$
  for all $\nu\in \Lin(C)$ with $\nu(z)=-1$.
  Thus $S$ is abelian and $G$ is as in Lemma~\ref{l:abab_1}
  with $p=2$ in this case.
  
  Now assume that $\C_A(t) = 1$.
  If $S$ is abelian and $C$ is not just 
  an elementary abelian $2$-group,
  then again we find $\mu \in \Lin(CA)$ 
  with $\mu^t \notin \{\mu, \overline{\mu}\} $
  and $\chi = \mu^G$ as above,
  so that $m_q(\chi) = 2$ for some odd prime $q$,
  and the result follows again.
  
  If $S$ is abelian and $C$ is elementary $2$-abelian,
  then $G = S [A,t]$ is generalized dicyclic.
  
  Finally, assume that $\C_A(t) =1$
  and that $S$ is nonabelian.
  The $2$-group~$S$ occurs on the list in Theorem~\ref{t:blackburn1}.
  If $S\iso C_4 \times Q_8 \times (C_2)^r$
    or $S\iso Q_8 \times Q_8 \times (C_2)^r$, 
    then $S$ is generated by elements $u$ such that
    $\erz{u}\cap \R(S) =1$ and thus $S$ would centralize
    $A$, so this is impossible.
  (Alternatively, look at Blackburn's 
    list~\cite[Theorem~2]{Blackburn66}.)
  Thus $S$ is generalized dicyclic,
  and $S$ has an abelian subgroup $D$ of index $2$,
  such that $d^s=d^{-1}$ for all $d\in D$ and $s\in S\setminus D$.
  If $D=C$, then $G$ is generalized dicyclic.
  Thus we may assume that $D\neq C$.
  If $S$ is Dedekind, then $S\iso Q_8 \times (C_2)^r$,
  and we could choose $D=C$.
  So we can assume that $S$ is not Dedekind,
  and thus $\R(S) = \erz{z}$ by Lemma~\ref{l:gendickern}.
  We may choose $t\in D\setminus C$ and
  $s\in C\setminus D$.
  Since both $C$ and $D$ are abelian,
  it follows that
  $s$ centralizes $C\cap D$,
  and at the same time inverts the elements in $C\cap D$.
  Thus $C\cap D$ has exponent~$2$.
  Since $\card{S:C}=\card{S:D}=2$ and
  $z \in \erz{s}\cap\erz{t}$,
  it follows $s^2 = t^2 = z$.
  Since $st\notin D$, we also have $(st)^2 = z$.
  It follows that $\erz{s,t}\iso Q_8$ and
  $S = \erz{s,t} \times (C\cap D)
     \iso Q_8 \times (C_2)^r$.
  But then $S$ is Dedekind and $G$ generalized dicyclic, 
  contradiction.
\end{proof}

Finally, we recall a part of~\cite[Theorem~2(a)]{Blackburn66}.

\begin{lemma}\label{l:blackburn2a}
  Let $G =PA$ be a nonabelian Blackburn group such that
  $\R(G) \leq P\in \Syl_p(G)$ and such that $P$
  and the normal $p$-complement $A$ are abelian. 
  Then we can write 
      $ P = \erz{g} \times P_0$, such that
      $\C_P(A) = \erz{g^{p^c}} \times P_0$ ($c\geq 1$),
      and $p^d := \ord( g^{p^c} )$ is the exponent
      of $\C_P(A)$.
   There is a $k \in \nats$ such that
      $a^g = a^k$ for all $a\in A$.
\end{lemma}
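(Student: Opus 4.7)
The plan is to read off the required decomposition of $P$ from the structure imposed by the Blackburn hypothesis $\R(G)\neq 1$. First, I would observe that since $\R(G)\leq P$ and $P\cap A = 1$, no subgroup of $A$ can contain $\R(G)$, so every subgroup of $A$ is normal in $G$. Consequently $P$ acts on each Sylow $q$-subgroup $Q_q$ of $A$ preserving all its subgroups, and Lemma~\ref{l:specialauts} supplies an integer $k_q$ with $x^g = x^{k_q}$ for all $x\in Q_q$. The Chinese remainder theorem then provides a single integer $k$ with $k\equiv k_q\pmod{\exp Q_q}$ for every prime $q\mid \card{A}$, so that $a^g = a^k$ for all $a\in A$; this proves the last assertion of the lemma.

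Next I would record the criterion that for $u\in P$, $\erz{u}$ is normal in $G$ if and only if $u\in \C_P(A)$: indeed $P$ is abelian, and for $a\in A$ one has $u^a = u[u,a]$ with $[u,a]\in A$ (as $A\nteq G$), so $u^a\in \erz{u}$ forces $[u,a]\in \erz{u}\cap A = 1$. In particular $\R(G)\leq \erz{u}$ for every $u\in P\setminus \C_P(A)$. I would then pick any cyclic direct-factor decomposition $P = \erz{x_1}\times\dotsm\times\erz{x_r}$. If two of the $x_i$ lay outside $\C_P(A)$, then $\R(G)\leq \erz{x_i}\cap\erz{x_j} = 1$ would contradict the hypothesis; and $\C_P(A) < P$ forces at least one $x_i$ to lie outside. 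Relabel the unique such $x_i$ as $g$ and set $P_0 := \erz{x_2,\ldots,x_r}\leq \C_P(A)$. From the isomorphism $\erz{g}/(\erz{g}\cap \C_P(A))\iso P/\C_P(A)$ I read off that $P/\C_P(A)$ is cyclic of some order $p^c$ (with $c\geq 1$), that $\erz{g}\cap \C_P(A) = \erz{g^{p^c}}$, and that $\C_P(A) = \erz{g^{p^c}}\times P_0$.

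The main obstacle is the claim $p^d := \ord(g^{p^c}) = \exp \C_P(A)$. Write $p^a := \ord(g^{p^c})$ (so $\ord(g) = p^{a+c}$) and assume for contradiction that some $y\in P_0$ has order $p^e$ with $e\geq a+1$; set $u := g^{p^{c-1}}y$. The image of $u$ in $P/\C_P(A)\iso C_{p^c}$ is that of $g^{p^{c-1}}$, which has order $p$, so $u\notin \C_P(A)$ and $\R(G)\leq \erz{u}$. But in $\erz{g}\times P_0$, $u^k = g^{p^{c-1}k}y^k$ lies in $\erz{g}$ exactly when $p^e\mid k$; writing $k = p^e k'$ gives $u^k = g^{p^{c-1+e}k'}$, and $c-1+e\geq a+c$ forces this element to be $1$. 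Hence $\erz{u}\cap \erz{g} = 1$ while at the same time $\R(G)\leq \erz{u}\cap \erz{g}$, contradicting $\R(G)\neq 1$. Therefore $\exp P_0\leq p^a$, so $\exp \C_P(A) = p^a$, as required.
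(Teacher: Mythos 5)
Your proof is correct, and it fills a genuine gap in the exposition: the paper does not prove this lemma at all but simply cites it as part of Blackburn's Theorem~2(a), so there is no in-paper argument to compare against. Your route is essentially Blackburn's own and fits naturally with the machinery the paper already uses: the key engine is that $\R(G)$ is contained in every nonnormal subgroup, combined with your (correctly justified) observation that for $u$ in the abelian group $P$ one has $\erz{u}\nteq G$ exactly when $u\in\C_P(A)$, since $[u,a]\in P\cap A=1$. This forces all but one cyclic direct factor of $P$ into $\C_P(A)$, identifies $\C_P(A)=\erz{g^{p^c}}\times P_0$, and your element $u=g^{p^{c-1}}y$ with $\erz{u}\cap\erz{g}=1$ cleanly excludes a factor of $P_0$ of order exceeding $\ord(g^{p^c})$; the CRT patching of the exponents $k_q$ into a single $k$ with $a^g=a^k$ is also fine. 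Two cosmetic remarks, neither a gap: you tacitly use $\C_P(A)<P$ to get ``at least one $x_i$ outside,'' which is not literally among the hypotheses but is implicit in the conclusion $c\geq 1$ and holds where the paper applies the lemma (after reducing to the non-nilpotent case); and the uniform power map $x^g=x^{k_q}$ on a Sylow subgroup of $A$ is established in the \emph{proof} of Lemma~\ref{l:specialauts} rather than in its statement, so you should either cite that proof or repeat its two-line argument.
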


\begin{proof}[Proof of Theorem~\ref{main:qclass}]
  Assume that $\NKer(G)\neq 1$ and let $p$ be a prime
  dividing $\card{\NKer(G)}$.
  By Lemma~\ref{l:bb_pnilp},
  we have that $G=PA$, where $P\in \Syl_p(G)$
  and $A$ is a normal $p$-complement and a Dedekind group.
  By Lemma~\ref{l:qclass_q8c2h}, we may assume that
  $A$ is abelian.
  By Theorem~\ref{t:qclass_nilp}, we can
  assume that $G$ is not nilpotent, and thus
  $\C_P(A) < P$.
  We assume that $G$ is not generalized dicyclic.
  By Lemmas~\ref{l:abab_1} and~\ref{l:c2ab},
  we can assume that
  $G = (PQ)\times B$
  and $A = Q \times B$,
  where $Q\in \Syl_q(G)$,
  and that $P$ is also abelian.
  Thus Lemma~\ref{l:blackburn2a} applies.
  Write $P= \erz{g} \times P_0$ and 
  $C= \C_P(A) = \erz{g^{p^c}} \times P_0$,
  as in that lemma.

  Let $z\in \R(G) \subseteq \erz{g^{p^c}}$ have order $p$
  and notice that we must have $z\in \NKer(G)$,
  since $1\neq \NKer(G)\leq \R(G)$.
  Choose $\mu\in \Lin(CB)$ such that $\mu(z)\neq 1$ 
  and $\mu$ has the maximal
  possible order $p^d n$, where $n$ equals the exponent of $B$
  and $p^d = \ord(g^{p^c})$.
  Choose $\lambda \in \Lin(Q)$, $\lambda\neq 1_Q$.
  Then $\chi = (\mu\times \lambda)^G \in \Irr(G)$ as in 
  Lemma~\ref{l:qsi_allab}.
  Since $z\notin \Ker(\chi)$,
  we must have 
  $m_{\rats}(\chi) = \chi(1)$.
  By the local-global principle (Lemma~\ref{l:sifacts}~\ref{it:globloc}),
  the Schur index of $\chi$ over some local field
  must equal $\chi(1)= \card{P:C} = p^c$.
  We claim that this local field must be $\rats_q$.
  If $p^c > 2$, then we must have
  $m_q(\chi)= \chi(1)$, 
  by Lemma~\ref{l:sifacts}~\ref{it:localsi_div}
  and Lemma~\ref{l:abbglocal}.
  If $p^c = 2$,
  then $\mu\in \Lin(CB)$ has order greater than $2$:
  Namely, when the exponent of $C \times B \leq  \Z(G)$ divides $2$,
  then $G$ is generalized dicyclic,
  and we assume that this is not the case.
  Thus $m_{\reals}(\chi) = 1$.
  Also, $\chi$ is induced from a subgroup of index $2$
  (which is not a $2$-group),  
  and thus $\chi$ remains irreducible after reducing mod~$2$.
  Thus $m_2(\chi) =1$ by Lemma~\ref{l:sifacts}~\ref{it:localsi_brauer}.
  Thus in every case, 
  we must have
  $m_q(\chi) = \chi(1)$.
  
  We can now apply Lemma~\ref{l:qsi_allab}.
  Notice that since $\mu(z)\neq 1$ by assumption,
  we have $\ord(\theta) = p^c \ord(\mu)$,
  as can be seen from the structure of $P$ 
  (Lemma~\ref{l:blackburn2a}), 
  and $\ord(\mu) = p^d \cdot n$,
  where $n$ equals the exponent of $B$.
  
  Let $k$ be the order of $q$ in $\ints/(p^d n)$
  and $\ell$ the order of $q$ in $\ints/(p^{c+d}n)$.
  By Lemma~\ref{l:qsi_allab}, we must have
  $\ell /k = \chi(1) = p^c$.
  This is only possible when 
  $(q^k-1)_p = p^d > 2$.
  By elementary number theory,
  the arithmetical conditions in Lemma~\ref{main:pq}
  must hold.
  Moreover, the $p$-part of the multiplicative order of $q$
  modulo $n$ must divide the multiplicative order of $q$ modulo $p^d$.
  As $n$ and $\card{B}$ have the same prime divisors,
  and $p$ does not divide $\card{B}$,
  this means that \ref{it:m_allab} in Theorem~\ref{main:qclass}
  holds.
\end{proof}

\section*{Acknowledgment}

I wish to thank Erik Friese for a thorough reading of this paper
and many useful remarks.
I gratefully acknowledge support by the DFG (Project: SCHU 1503/6-1).


\printbibliography

\end{document}